\documentclass[11pt,reqno]{amsart}
\usepackage{amssymb,amsmath,amsthm,verbatim,amstext,eucal,amscd}

\usepackage{fullpage}
\usepackage{mathtools}
\usepackage{verbatim}
\usepackage{enumitem}

\usepackage[all]{xy}

\numberwithin{equation}{section}

\newcommand{\id}{{\operatorname{id}}}
\newcommand{\ad}{\operatorname{ad}}
\newcommand{\proj}{\operatorname{proj}}
\newcommand{\ct}{Cartan triple}

\providecommand{\pauto}{\operatorname{pAut}}

\providecommand{\cliff}[1]{\operatorname{Cliff}(\pauto(#1))}
\providecommand{\fund}[1]{\operatorname{Fund}(\pauto(#1))}

\newtheorem{lemma}{Lemma}[section] 
\newtheorem{proposition}[lemma]{Proposition}
\newtheorem{theorem}[lemma]{Theorem}

\newtheorem{corollary}[lemma]{Corollary}

\newcommand{\FLEX}{\relax}
\newcommand{\flex}[1]{\renewcommand{\FLEX}{#1}}
\newtheorem{flexthm}[lemma]{\FLEX}

\theoremstyle{definition}
\newtheorem{definition}[lemma]{Definition}
\newtheorem{example}[lemma]{Example}

\newenvironment{remark}[1]{\refstepcounter{lemma}%
\vskip 5pt \par\noindent {\bf #1\ \thelemma .}}{\vskip 5pt \par}

\newenvironment{remark*}[1]{\par \vskip 5pt \noindent 
{\bf #1.}}{\vskip 5pt \par}

\newcommand{\Aut}{\operatorname{Aut}}
\newcommand{\aut}{\operatorname{Aut}}

\newcommand{\bh}{\ensuremath{{\mathcal B}({\mathcal H})}}

\newcommand{\cstaralg}{$C^*$-algebra}

\newcommand{\dstext}[1]{\quad\text{#1}\quad}
\newcommand{\eps}{\ensuremath{\varepsilon}}

\newcommand{\innerprod}[1]{\left\langle #1\right\rangle}

\newcommand{\mean}{\mathop{
     \mathchoice{\vcenter{\hbox{\huge
           $\Lambda$}}}{\Lambda}{\Lambda}{\Lambda}}
     \displaylimits}

\newcommand{\meet}{\mathop{
     \mathchoice{\vcenter{\hbox{\huge
           $\bigwedge$}}}{\wedge}{\wedge}{\wedge}}
     \displaylimits}

\newcommand{\norm}[1]{\left\|{#1}\right\|}

\newcommand{\ran}{\operatorname{range}}

\newcommand{\spn}{\operatorname{span}}

\newcommand{\vntensor}{\overline{\otimes}}

\newcommand{\bbC}{{\mathbb{C}}}

\newcommand{\bbN}{{\mathbb{N}}}

\newcommand{\bbR}{{\mathbb{R}}}

  \newcommand{\A}{{\mathcal{A}}}
  
  \newcommand{\B}{{\mathcal{B}}}
  \newcommand{\C}{{\mathcal{C}}}
  
  \newcommand{\D}{{\mathcal{D}}}
  
  \newcommand{\E}{{\mathcal{E}}}
  \newcommand{\F}{{\mathcal{F}}}
  \newcommand{\G}{{\mathcal{G}}}
   
\renewcommand{\H}{{\mathcal{H}}}

\renewcommand{\L}{{\mathcal{L}}}
  \newcommand{\M}{{\mathcal{M}}}
  \newcommand{\N}{{\mathcal{N}}}

\renewcommand{\P}{{\mathcal{P}}}
  
  \newcommand{\R}{{\mathcal{R}}}
  
\renewcommand{\S}{{\mathcal{S}}}
  \newcommand{\T}{{\mathcal{T}}}
  \newcommand{\U}{{\mathcal{U}}}

  \newcommand{\X}{{\mathcal{X}}}
  \newcommand{\Y}{{\mathcal{Y}}}
  \newcommand{\Z}{{\mathcal{Z}}}

\newcommand{\fA}{{\mathfrak{A}}}

\newcommand{\fH}{{\mathfrak{H}}}

\newcommand{\fJ}{{\mathfrak{J}}}

\newcommand{\fM}{{\mathfrak{M}}}

\newcommand{\fP}{{\mathfrak{P}}}

\newcommand{\pil}{\pi_\ell{}}
 
\newcommand{\fn}{\mathfrak{n}}
\newcommand{\fm}{\mathfrak{m}} 
\newcommand{\fp}{\mathfrak{p}}

\renewcommand{\>}{\rangle}

\newcommand{\ol}{\overline}

\newcommand{\atom}{\operatorname{atom}}
\newcommand{\sge}{\sb q}

\newcommand{\cocycle}{\sigma}

\begin{document}

\title[Cartan Triples]
{Cartan Triples} 
\author[A. P. Donsig]{Allan P. Donsig}\address{Dept. of
Mathematics\\ University of Nebraska-Lincoln\\ Lincoln, NE\\
68588-0130 } \email{adonsig@unl.edu}
\author[A. H. Fuller]{Adam H. Fuller} \address{Dept. of
Mathematics\\ Ohio University\\ Athens, OH\\
45701 } \email{fullera@ohio.edu} 
\author[D.R. Pitts]{David
R. Pitts} \thanks{This work was partially supported by a grant from the Simons
  Foundation (\#316952 to David Pitts).} \address{Dept. of Mathematics\\ University of
Nebraska-Lincoln\\ Lincoln, NE\\ 68588-0130}
\email{dpitts2@unl.edu}

\keywords{Von Neumann algebra, 
Bimodule, Cartan MASA}
\subjclass[2010]{Primary 46L10,
  Secondary 06E75, 20M18, 20M30, 46L51}

\begin{abstract}
  We introduce the class of \ct s as a generalization of the notion of
  a Cartan MASA in a von Neumann algebra.  We obtain a one-to-one
  correspondence between \ct s and certain Clifford extensions of
  inverse semigroups.  Moreover, there is a spectral theorem
  describing bimodules in terms of their support sets in the
  fundamental inverse semigroup and, as a corollary, an extension of
  Aoi's theorem to this setting.  This context contains that of
  Fulman's generalization of Cartan MASAs and we discuss his
  generalization in an appendix.
\end{abstract}

\maketitle

\section{Introduction}\label{S: Intro}

As observed in the seminal work of
Feldman-Moore~\cite{FeldmanMooreErEqReI, FeldmanMooreErEqReII}, when a
von Neumann algebra contains a Cartan MASA, strong structural results
about the algebra may be obtained.  However, many von Neumann algebras
do not contain a Cartan MASA; the first examples were found
in~\cite{VoiculescuAnEnFiInMeFrPrThIII}.  Determining which von
Neumann algebras have a Cartan MASA and when it is unique is an
important question and has attracted significant attention; for two
examples, see~\cite{PopaVaesUnCaDeII1FaArFrArAcFrGr,
  PopaVaesUnCaDeII1FaArFrArAcHyGp}.  Part of the interest is that
Cartan MASAs are closely connected to crossed product decompositions,
as indeed is clear from the work of Feldman-Moore.

Recall a Cartan MASA $\D$ in a von Neumann algebra $\M$ is a maximal
abelian subalgebra with two additional properties: it is regular, that
is, the span of its normalizers is weak-$*$ dense in $\M$; and there
is a faithful, normal conditional expectation from $\M$ to $\D$.

In this paper, we study a much larger family of regular abelian von
Neumann subalgebras of von Neumann algebras.  Specifically, if $\M$ is
a von Neumann algebra, we consider an abelian and regular subalgebra
$\D\subseteq \M$ such that there is a faithful normal conditional
expectation onto the relative commutant $\D^c$ of $\D$ in
$\M$. Because $\D^c$ plays an important role in the structure of the
algebras, we name it $\N$ and call $(\M, \N, \D)$ a \textsl{Cartan
  triple}. 

In our previous work \cite{DonsigFullerPittsVNAlExInSe}, we showed
that Cartan MASAs can be described in terms of certain extensions of inverse
semigroups.  In the setting of Cartan triples, our main result is a
correspondence between Cartan triples and a larger class of extensions of inverse
semigroups
\[ \P\hookrightarrow \G\overset{q}\twoheadrightarrow \S. \]
Further, we obtain a Spectral Theorem for
$\N$-Bimodules and a version of Aoi's theorem in this context of
Cartan triples.  Although some of the methods
from~\cite{DonsigFullerPittsVNAlExInSe} extend naturally, significant
modifications are needed.  For example, for Cartan triples, $\P$ is not usually an abelian
inverse semigroup, but rather is Clifford, that is, the idempotents of
$\P$ commute with all elements of $\P$.

Various generalizations of MASAs have been considered in the
literature.  For example in~\cite{ExelNoCaSuC*Al}, Ruy Exel connects
the existence of a suitable non-abelian generalization of a MASA in a
\textit{separable} \cstaralg\ to a reduced crossed product
decomposition of the containing algebra.  Instead of the inverse
semigroup approach considered here, Exel considers a Fell bundle over
an inverse semigroup as the classifying structure.  The appropriate
variant of Exel's notion of a generalized Cartan subalgebra in the von
Neumann algebra setting is a full Cartan triple, meaning $\D$ is the center
of $\N$ (see
Definition~\ref{D: ct}).  Our approach using extensions of inverse
semigroups for classification, while related to Fell bundles, is
rather different.

Another generalization, also related to crossed product
decompositions, has been considered by Igor Fulman
in~\cite{FulmanCrPrvNAlEqReThSu}.  Fulman's generalization of a Cartan
MASA is, in our terms, a \ct\ with an additional condition, the
existence of a subgroup of the unitaries in $\M$ that normalizes $\D$,
contains the unitaries of $\N$, and has a suitable fixed point
property.  We show in Appendix A that this additional condition can be
characterized as the existence of a lift of an inverse semigroup
homomorphism from $\S$ into the partial automorphisms of the Cartan
triple.  Crossed products by inverse semigroups were first introduced
by N\'andor Sieben in~\cite{SiebenC*CrPrPaAcAcInSe} using such a
semigroup homomorphism into the partial automorphisms of a
C$^*$-algebra.  Thus, Fulman's condition can be interpreted naturally
as saying that the containing algebra is a crossed product by a
suitable inverse semigroup. Fulman's starting point was to generalize
the Feldman-Moore characterization of Cartan subalgebras
\cite{FeldmanMooreErEqReI, FeldmanMooreErEqReII} using measured
equivalence relations.  While some of our results resemble Fulman's,
ours are more general, perhaps because of 
the comparative simplicity of the inverse semigroup approach used
here.

We now discuss our results and their motivation in more detail.  We
associate to each Cartan triple an extension of inverse semigroups,
$\P \hookrightarrow \G \to \S$ where $\S$ is a fundamental inverse
semigroup, that is, the only elements commuting with the idempotents
of $\S$, denoted $\E(\S)$, is $\E(\S)$ itself, and $\P$ is Clifford,
meaning all elements of $\P$ commute with $\E(\S)$.  To be an
extension, the restriction to idempotents of the maps above must be
isomorphisms.  It is well known that every inverse semigroup $\G$ may
be represented as such an (idempotent-separating) extension; see
\cite[p.~141]{LawsonInSe}.

To construct the extension from a Cartan triple, take $\P$ to be the
partial isometries in $\N$ that normalize $\D$ and $\G$ to the partial
isometries in $\M$ that normalize $\D$, with $\P \hookrightarrow \G$ the inclusion
map.  To construct $\S$, we identify elements with the same action on
the idempotents, i.e., we quotient by the Munn congruence.

In~\cite{DonsigFullerPittsVNAlExInSe}, the inverse semigroup $\P$ was
abelian and we required that the character space of $\E(\P)$
was hyperstonean.  In that case, it was easy to recover $\D$ from
$\P$, as the continuous functions on the character space of $\E(\P)$.

Here, we need a condition that allows us to again recover $\D^c=\N$ from $\P$:  $\P$
arises as the partial isometries in a von Neumann algebra $\N$ which
normalize a fixed von Neumann subalgebra of the center of $\N$.  
In this case, we say that $\P$ is an $\N$-Clifford inverse monoid 
(Definition~\ref{def: N-clifford}).

To
see the need for this condition, consider the (degenerate) Cartan triple,
$(\M, \M, \bbC I)$.  In this case, the associated extension has the
form $\U(\M) \overset{\id}\to \U(\M) \to \bbC I$.  However, there are
von Neumann algebras not isomorphic to their opposite algebras
\cite{ConnesFaNoAnIsItWDetails}.  The unitary groups of such an
algebra and its opposite are isomorphic, so if the extension was
defined purely in terms of inverse semigroups (and without our
stronger condition) it would be possible for non-isomorphic triples to
produce the same extension.  

With these definitions in hand and the construction of an extension
from a triple (as outlined above), we show that Cartan triples are
isomorphic if and only if their extensions are (in a suitable sense)
isomorphic, Theorem~\ref{T: same data}.

To obtain the converse, we construct a Cartan triple from an extension
in Section~\ref{sec: rep G}. This is more subtle, and we build on the
strategy of our previous paper.  In particular, we use the order
structure of $\S$ to construct a reproducing kernel Hilbert
$\N$-bimodule, $\fA$.  We then define a representation, $\lambda$, of
$\G$, Theorem~\ref{T: Grepdef}, by partial isometries on $\fA$.  After
tensoring this representation with a faithful normal representation of
$\N$ to obtain a suitable representation of $\G$ (Theorem~\ref{T:
  Grepdef}), we define the Cartan triple of an extension in terms of
the double commutants of $\G$, $\P$, and their (common) idempotents
(Definition~\ref{ctripledef}).  In Theorem~\ref{T: indeppsi} we complete
the circle of ideas by showing that the extension associated to the
\ct\ constructed is (isomorphic to) the original extension.

In Section~\ref{sec: spectral thm} we begin a study of the
$\N$-bimodules in a \ct\ $(\M,\N,\D)$.  For our strongest results we
require that $\D$ be as large as possible, that is, $\D$ is the center
$\Z(\N)$ of $\N$.  We call
such a Cartan triple \textit{full}.  When $(\M,\D)$ is a
Cartan pair,  $(\M,\D,\D)$ is a full \ct, and so the class of full
\ct s properly includes Cartan pairs.  Different examples arise when
$\M$ is type I, Section~\ref{sec: examples type I}, and when $\M = \N
\rtimes_\alpha G$ is a crossed product of $\N$ by a discrete group $G$
which acts by properly outer automorphisms on $\N$ and $\Z(\N)$,
Theorem~\ref{thm: crossed prod}.

Let $(\M,\N,\D)$ be a full \ct, with associated extension $\P
\hookrightarrow \G \overset{q}\twoheadrightarrow \S$.  We show in
Theorem~\ref{plentyN} that if $B\subseteq \M$ is a non-zero weak-$*$
closed $\N$-bimodule, then $\G \cap B \neq \{0\}$.  Thus, every
weak-$*$ closed $\N$-bimodule gives rise to a non-trivial subset $q(B
\cap \G)$ of $\S$.  We call such sets \emph{spectral sets},
(Definition~\ref{def: spec set}).  If $A \subseteq \S$ is a spectral
set, then $\spn\{q^{-1}(A)\}$ is an $\N$-bimodule in $\M$.

Of course, it is conceivable that distinct weak-$*$ closed $\N$-bimodules
have the same spectral sets.  To study this, we 
use the Bures-topology on $\M$, induced by the conditional expectation
$E \colon \M \rightarrow \N$.  Whilst
the weak-$*$ and Bures topologies on $\M$ are not, in general,
comparable, the Bures-closed $\N$-bimodules are weak-$*$
closed~\cite[Lemma~3.1]{CameronSmithBiCrPrVNAl}.  The advantage of the
Bures topology over the weak-$*$ topology is that certain Fourier-type
series often converge in the Bures topology, while they need not
converge in the weak-$*$ topology (or in any other
``natural" topology).  Indeed, Mercer showed in
\cite{MercerCoFoDiCrPrVNAl} that the Fourier series of elements in
crossed-product von Neumann algebra converge in the Bures-topology,
but need not converge in the weak-$*$ topology.  Analogously, when $(\M,\N,\D)$ is a
\ct, we show in Theorem~\ref{BuresApx1} that if $x\in \M$, then $x$ is
the Bures-limit of the net of finite sums
\[\sum_{u\in F\subseteq \G\N(\M,\D)} u E(u^*x).\]
Similar results for $x$ in a Cartan pair are given in
\cite[Proposition~2.4.4]{CameronPittsZarikianBiCaMASAvNAlNoAlMeTh} and
\cite[Theorem~4.4]{MercerBiOvCaSu}.

In Proposition~\ref{samegn} we show that if $B$ is a weak-$*$ closed
$\N$-bimodule, $B_0 = \ol{\spn}^{wk^*}\{B \cap \G\}$, and $B_1 =
\ol{\spn}^{\mathrm{Bures}}\{B \cap \G\}$, then $B_0 \subseteq B
\subseteq B_1$ and each of $B_0$, $B$ and $B_1$ give the same spectral
sets.  We do not address when the bimodules $B_0$, $B$ and $B_1$ are
necessarily equal.  In
\cite{CameronPittsZarikianBiCaMASAvNAlNoAlMeTh}, if all weak-$*$
bimodules are necessarily Bures closed, the Cartan pair is said to
satisfy \emph{spectral synthesis}.  Even in the case of Cartan pairs,
whether $B_0=B_1$ remains an open problem.  There are some special cases when the
result is known.  If $(\M,\N,\D)$ is a \ct\ of the type studied by
Fulman \cite{FulmanCrPrvNAlEqReThSu} discussed above, with the added
condition that $\M$ is constructed from a hyperfinite equivalence
relation, then it can be deduced from Theorem~\ref{spthbi} and
\cite[Theorem~15.18]{FulmanCrPrvNAlEqReThSu} that all weak-$*$ closed
$\N$-bimodules are necessarily Bures closed.  Cameron and Smith
\cite{CameronSmithBiCrPrVNAl,CameronSmithInSuBiGeCrPrVNAl} studied a
related problem in crossed-products.  They showed that if $G$ is a
discrete group satisfying the AP condition, acting on a von Neumann
algebra $\N$ by properly outer automorphisms, then the weak-$*$ closed
$\N$-bimodules in $\N \rtimes_\alpha G$ are necessarily Bures closed.

We give a Spectral Theorem for Bimodules in Theorem~\ref{spthbi},
which gives a one-to-one correspondence between the Bures-closed
$\N$-bimodules and the spectral sets in $\S$.  
We find it striking
that Theorem~\ref{spthbi} depends
only on $\S$ and not on the extension $\G$.  Fuller and Pitts
\cite{FullerPittsIsLaBuClBiCaMA} had previously observed a similar
phenomenon: non-isomorphic Cartan pairs that have isomorphic lattices
of Bures-closed bimodules.
Theorem~\ref{spthbi} generalizes the Spectral Theorem for Cartan pairs found in
\cite{DonsigFullerPittsVNAlExInSe}; see also
\cite{CameronPittsZarikianBiCaMASAvNAlNoAlMeTh}.  It should be noted
that the study of bimodules in Cartan pairs was initiated in the
seminal work of Muhly, Saito and Solel \cite{MuhlySaitoSolelCoTrOpAl}.
They present a spectral theorem for weak-$*$ closed bimodules.  Their
work, however, has a gap.  Though not explicitly stated as such, the
gap in \cite{MuhlySaitoSolelCoTrOpAl} amounts to assuming that the
weak-$*$ closed bimodules are necessarily Bures closed, see
\cite{CameronPittsZarikianBiCaMASAvNAlNoAlMeTh}.  

A class of $\N$-bimodules of particular interest are the von Neumann
algebras $\L$ such that $\N \subseteq \L \subseteq \M$.  In
Theorem~\ref{Aoi} we show that if $(\M,\N,\D)$ is a full \ct\ and
$\N \subseteq \L \subseteq \M$, then $(\L,\N,\D)$ is again a \ct.
This extends Aoi's result for intermediate von Neumann algebras in
Cartan pairs \cite{AoiCoEqSuInSu}.  A key step in the proof is showing
that an intermediate subalgebra $\L$ is necessarily closed in the
Bures topology.  Thus, Theorem~\ref{Aoi} together with
Theorem~\ref{spthbi} immediately give a one-to-one correspondence
between the intermediate von Neumann subalgebras containing $\N$, and
the  sub-inverse Cartan monoids of $\S$, Corollary~\ref{cor:
  galois}.  We view this as a Galois correspondence-type result;
although we do not have a group to hand, there is the Cartan inverse
semigroup.  Corollary~\ref{cor: galois} should be compared with the
following well-known result: If $\N$ is a factor and $G$ is a discrete
group acting on $\N$ by (properly) outer automorphisms, then Izumi,
Longo and Popa \cite{IzumiLongoPopaGaCoCoGrAuVNAlGeKaAl} show that if
$\L$ is a von Neumann algebra satisfying
$\N \subseteq \L \subseteq \N \rtimes_\alpha G$ then there is a
subgroup $H$ of $G$ such that $\L = \N \rtimes_\alpha H$; see also
\cite{CameronSmithBiCrPrVNAl, ChodaGaCoVNAl}.  That is, there is a
one-to-one correspondence between subgroups of $G$ and the von Neumann
algebras $\M$ with $\N \subseteq \M \subseteq \N \rtimes_\alpha G$.  A
similar Galois correspondence-type theorem without an explicit group
structure has been obtained
in~\cite{AoiYamanouchiOnNoGrCoGrInFaErEqReSu} for von Neumann algebras
generated by a measured equivalence relation and an appropriate
cocycle.

Cameron and Smith have considered similar questions in
\cite{CameronSmithBiCrPrVNAl,
  CameronSmithGaCoReCrPrUnSiC*AlDiGp,CameronSmithInSuBiGeCrPrVNAl}.
They study crossed products by discrete groups and the bimodule and
intermediate algebra structure therein, amongst other things.  There
is overlap with our work and \cite{CameronSmithInSuBiGeCrPrVNAl}, with
neither work subsuming the other.  There they let $\N$ be any von
Neumann algebra and let $G$ be a discrete group acting on $\N$ by
properly outer automorphisms.  If $\N$ is abelian, then $\N$ is a
Cartan MASA in $\N\rtimes_\alpha G$ and so both our settings cover
this case.  If $\N$ is not abelian, but $G$ also acts on $\Z(\N)$ by
properly outer automorphisms then it is shown in Theorem~\ref{thm:
  crossed prod} $(\N\rtimes_\alpha G, \N, \Z(\N))$ is a \ct.

\section{Cartan triples and their extensions} \label{S: prelim}

Our main goals in this section are the construction of the extension
associated to a Cartan triple, Proposition~\ref{propextension}, and
the result that two such extensions are isomorphic if and only if they arise from
isomorphic Cartan triples, Theorem~\ref{T: same data}.

We begin by fixing some notation.  For $\M$ a von Neumann algebra,
$\Z(\M)$ denotes its center and $\U(\M)$ the unitary elements.  For
$\X \subseteq \M$, $\X^c$ denotes the relative commutant, that is,
\[\X^c:=\{m\in \M: xm=mx \text{ for all } x\in \X\}.\]

\begin{definition} Suppose $\M$ and $\L$ are von Neumann algebras with
$\L\subseteq \M$.  The \textit{groupoid normalizer of $\L$ in $\M$} is
the set,
\[\G\N(\M,\L):=\{v\in \M: v \text{ is a partial isometry and $v^*\L
v\cup v\L v^*\subseteq \L$}\}.\] If the linear span of $\G\N(\M,\L)$
is weak-$*$ dense in $\M$, we say $\L$ is a \textit{regular subalgebra
of $\M$} or that $\L$ is \textit{regular} in $\M$.
\end{definition}

\begin{remark}{Remark} When $\L\subseteq \M$ is an abelian von Neumann
subalgebra of $\M$, it is more common to say $\L$ is regular in $\M$
if $\spn\{U\in\U(\M): U\D U^*=\D\}$ is weak-$*$ dense.  However, if
$\L$ is abelian, then
\begin{equation}\label{ungn} \spn\{U\in\U(\M): U\D
U^*=\D\}=\spn\G\N(\M,\D),
\end{equation} thus the two definitions coincide in this case.  For a
proof of this statement see \cite[p.~479,
Inclusion~2.8]{CameronPittsZarikianBiCaMASAvNAlNoAlMeTh}.
\end{remark}

We now introduce our main topic of study.

\begin{definition}\label{D: ct}  A \textit{Cartan triple} is a triple $(\M,\N,\D)$
consisting of three von Neumann algebras satisfying:
\begin{enumerate}
\item $\D$ is an abelian and regular von Neumann subalgebra of $\M$;
\item $\N$ is the relative commutant of $\D$ in $\M$; and
\item there exists a faithful normal conditional expectation
$E:\M\rightarrow \N$.
\end{enumerate} A Cartan triple $(\M,\N,\D)$ is \textit{full} when
$\D=\Z(\N)$.
\end{definition}

\begin{remark}{Remarks}\label{fullerpair}\begin{enumerate}
\item For any \ct\ $(\M,\N,\D)$, $\M\supseteq
\N\supseteq \D$ because $\D$ is abelian.
\item If  $(\M,\N,\D)$ is a \ct, then $(\M,\N,\Z(\N))$ is a full \ct.  Indeed,
$\N=\D^c$ implies  $\N=(\Z(\N))^c$, and since
$\G\N(\M,\D)\subseteq \G\N(\M,\Z(N))$, $\Z(\N)$ is regular
in $\M$.
\end{enumerate}
\end{remark}

Section~\ref{sec: examples} is devoted to examples of \ct s.  Here we
content ourselves with making two simple observations regarding what
occurs when two of the von Neumann algebras in a Cartan triple
coincide.

\begin{remark}{Examples}\label{2algagree}
\begin{enumerate}
\item Suppose $(\M,\D,\D)$ is a Cartan triple.  Then $(\M,\D,\D)$ is
full and $\D$ is a Cartan MASA in $\M$.  In this sense, the class of
Cartan triples includes the class of Cartan pairs.
\item Now suppose $(\N,\N,\D)$ is a Cartan triple.  Then $\D^c=\N$.
When $\N$ has separable predual, we may write $\D=L^\infty(X,\mu)$ and
write $\N=\int^{\oplus}_X \N_x\, d\mu(x)$ as a direct integral.  When
this is done, $\G\N(\N,\D)$ may be identified with the set of all
functions $f\in \int_X^\oplus \N_x\, d\mu(x)$ such that for almost
every $x\in X$, $f(x)\in\U(\N_x)\cup\{0\}$.
\end{enumerate}
\end{remark}

Example~\ref{2algagree}(b) shows how the inverse semigroup
$\G\N(\N,\D)$ can be used to describe a direct integral.  Further,
this inverse semigroup approach allows one to work with von Neumann
algebras which do not have separable predual.  This example discussed
further in Example~\ref{ex: direct int 2}.

We fix
some notation for inverse semigroups next.  For the most part, our
notation follows Section 2 of
\cite{DonsigFullerPittsVNAlExInSe}, which also gives much of the inverse semigroup
theory we will use.  For an in-depth text on inverse semigroups see
\cite{LawsonInSe}.  Throughout the paper:
\begin{itemize}
 \item $\E(\S)$ will denote the idempotents of the inverse
semigroup $\S$;
\item we use $s^\dag$ to denote the inverse of the element $s$ in an abstract inverse
semigroup; however, for an inverse semigroup of partial isometries on
a Hilbert space, the adjoint $v^*$ is the inverse of the element $v$
and we typically use $v^*$ instead of $v^\dag$ in this setting.
\end{itemize}

For our extensions, we need two special classes of inverse monoids:
Cartan inverse monoids, defined in
\cite{DonsigFullerPittsVNAlExInSe}, and $\N$-Clifford
inverse monoids, which are new.

\begin{definition}[{\cite[Definition~2.11]{DonsigFullerPittsVNAlExInSe}}]
\label{D: Cartan inv}
  We call an inverse semigroup $\S$ a \emph{Cartan inverse monoid} if
	\begin{enumerate}
	\item $\S$ is fundamental;
	\item $\S$ is a complete Boolean inverse monoid; and
	\item the character space $\widehat{\E(\S)}$ of the complete
Boolean lattice $\E(\S)$ is a hyperstonean topological space.
	\end{enumerate}
\end{definition}

\begin{definition}\label{def: N-clifford} Let $\N$ be a von Neumann
algebra.  An \textit{$\N$-Clifford inverse monoid} is an inverse
monoid $\P$ such that $\P=\G\N(\N,\D)$, where $\D$ is a von Neumann
subalgebra of $\Z(\N)$.  If in addition $\D=\Z(\N)$, we say $\P$ is a
\textit{full $\N$-Clifford inverse monoid}.
\end{definition}

\begin{remark}{Remark} Suppose $\P=\G\N(\N,\D)$ is an $\N$-Clifford
inverse monoid.  It is not difficult to show that
\[\P=\{v\in\N: v \text{ is a partial isometry with }
vv^*=v^*v\in\D\}\dstext{and} \E(\P)=\proj(\D).\] In particular, $\P$
is a Clifford inverse semigroup of partial isometries.

Since $\U(\N)\subseteq \P$, every element of $\N$ is a linear
combination of at most four elements of $\P$.
\end{remark}

We need an appropriate notion of isomorphism of such inverse monoids.
\begin{definition} If for $i=1,2$, $\P_i$ are $\N_i$-Clifford inverse
monoids, a map $\alpha:\P_1\rightarrow\P_2$ is an \textit{extendible
isomorphism} if there exists a normal $*$-isomorphism
$\theta:\N_1\rightarrow\N_2$ such that $\alpha=\theta|_{\P_1}$;
equivalently, there exists a normal isomorphism
$\theta:\N_1\rightarrow \N_2$ such that $\theta(\D_1)=\D_2$.
\end{definition} Obviously, any extendible isomorphism is an
isomorphism of inverse semigroups.

\begin{definition} For $i=1,2$, let $\S_i$ be Cartan inverse monoids,
let $\P_i$ be $\N_i$-Clifford inverse monoids, and suppose
$\P_i\xhookrightarrow{\iota_i} \G_i \overset{q}\twoheadrightarrow \S_i$ are
extensions of $\S_i$ by $\P_i$.  These extensions are
\textit{equivalent} if there are semigroup isomorphisms
$\alpha:\G_1\rightarrow \G_2$, $\tilde{\alpha}:\S_1\rightarrow \S_2$
and an extendible isomorphism $\underline \alpha: \P_1\rightarrow
\P_2$ such that
\[\alpha\circ \iota_1=\iota_2\circ\underline\alpha\dstext{and}
q_2\circ \alpha= \tilde\alpha \circ q_1.\]
\end{definition}

\begin{remark}{Remark} When $\P_i$ is the set of partial isometries in
$C(\widehat{\E(\S_i)})$ and $\N_i:=C(\widehat{\E(\S_i)})$, then this
definition reduces to the notion of equivalence for extensions found
in~\cite{DonsigFullerPittsVNAlExInSe}.
\end{remark}

\begin{definition}\label{Munn} Let $\G$ be an inverse semigroup.  The
\textit{Munn congruence} (also called the \textit{Munn relation}) on
$\G$ is the set
\[R_M:=\{(v_1,v_2)\in \G\times \G: v_1ev_1^\dag=v_2ev_2^\dag \text{
for all } e\in \E(\G)\}.\] Then $R_M$ is the maximal idempotent
separating congruence on $\G$ and the set of $R_M$-equivalence classes
equipped with the product $[v][w]=[vw]$ and inverse
$[v]^\dag=[v^\dag]$ form a fundamental inverse
semigroup~\cite[Proposition~5.2.5]{LawsonInSe}.
\end{definition}

We now show how a \ct\ gives rise to an extension of inverse
semigroups.

\begin{proposition}\label{propextension} Let $(\M,\N,\D)$ be a \ct\
and set
\[\G:=\G\N(\M,\D)\dstext{and} \P:=\G\N(\N,\D).\] Then $\G$ and $\P$
are inverse semigroups with $\P\subseteq \G$ and
\[\E(\P)=\E(\G)=\textit{proj}(\D).\] Moreover, the following
statements hold.
\begin{enumerate}
\item $\P$ is a $\N$-Clifford inverse monoid. 
\item $\P$ is the set of elements of $\G$ Munn-related to an
idempotent.
\item If $\S$ is the quotient of $\G$ by the Munn congruence, then
$\S$ is a Cartan inverse monoid.
\end{enumerate}
\end{proposition}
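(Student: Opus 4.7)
The plan is to establish the basic inverse-semigroup structure first, and then handle (a), (b), (c) in sequence. For the setup: $\G = \G\N(\M,\D)$ and $\P = \G\N(\N,\D)$ are closed under products and adjoints in $\M$, hence are inverse semigroups of partial isometries, with $\P \subseteq \G$ clear from $\D \subseteq \N \subseteq \M$. For any $v \in \G$, the identity $v^*v = v^* \cdot 1 \cdot v \in v^* \D v \subseteq \D$ shows $v^*v$ is a projection of $\D$, and symmetrically $vv^* \in \proj(\D)$; conversely every projection in $\D$ is a self-adjoint element of $\G$ (and of $\P$). Hence $\E(\G) = \proj(\D) = \E(\P)$.

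Part (a) is immediate: $\N = \D^c$ forces $\D \subseteq \Z(\N)$, so $\P = \G\N(\N,\D)$ satisfies the definition of an $\N$-Clifford inverse monoid.

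For part (b), one direction is direct: if $v \in \P$, then $v \in \N$ commutes with every $e \in \proj(\D) \subseteq \Z(\N)$, whence $vev^* = e(vv^*) = (vv^*) e (vv^*)$, so $v$ is Munn-related to the idempotent $vv^*$. For the converse, suppose $v \in \G$ is Munn-related to an idempotent $p \in \proj(\D)$. Taking $e = 1$ in the Munn relation yields $p = vv^*$, and substituting back gives $vev^* = e(vv^*)$ for every $e \in \proj(\D)$. Multiplying this identity on the right by $v$, the right-hand side becomes $e(vv^*)v = ev$, while the left-hand side simplifies to $v(v^*v)e = ve$ because $v^*v \in \D$ commutes with $e$ and $v(v^*v) = v$. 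Thus $ve = ev$ for every projection of $\D$, so by the spectral theorem $v \in \D^c = \N$, and hence $v \in \P$.

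Part (c) is the main technical step. Fundamentality of $\S$ follows from the defining property of the Munn congruence as the maximal idempotent-separating congruence. Because the quotient is idempotent-separating, $\E(\S)$ is order-isomorphic to $\E(\G) = \proj(\D)$, the projection lattice of an abelian von Neumann algebra, which is a complete Boolean algebra with hyperstonean Gelfand spectrum. It remains to verify that $\S$ is a complete Boolean inverse monoid; this reduces to producing joins of pairwise compatible families. To do this, I would lift a compatible family in $\S$ to representatives in $\G$, use the Boolean structure of $\proj(\D)$ to subdivide overlaps into pairwise orthogonal partial isometries that agree on the pieces where the original representatives compare, and then form the SOT-convergent sum of the orthogonalized pieces. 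The resulting operator is a partial isometry in $\M$ whose initial and final projections are suprema in $\proj(\D)$; it normalizes $\D$ because each summand does and the summands have mutually orthogonal supports, so conjugation by the sum assembles from the pieces. Passing through the Munn quotient then yields the desired join in $\S$. The main obstacle is precisely this final step: the bookkeeping required to make the orthogonalization well-defined and to verify that the SOT-sum lies in $\G\N(\M,\D)$ and represents the correct Munn class.
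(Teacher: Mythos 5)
Your setup and parts (a) and (b) are correct and essentially identical to the paper's argument. In particular, your converse direction in (b) --- taking $e=1$ in the Munn relation to get $vv^*=p$, then deducing $ve=ev$ for every $e\in\proj(\D)$ and hence $v\in\D^c=\N$ --- is the same computation the paper performs (there written as $vp=vpv^*v=epv=pev=pv$), and your observation for (a) that $\N=\D^c$ forces $\D\subseteq\Z(\N)$ is exactly what makes $\P$ an $\N$-Clifford inverse monoid by definition.

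Part (c) is where you diverge from the paper and where your proposal has a genuine gap, which you yourself flag. The paper's proof of (c) is short: fundamentality and the isomorphism $\E(\S)\cong\E(\P)=\proj(\D)$ are noted, and the remaining verification (that $\S$ is a complete Boolean inverse monoid with hyperstonean $\widehat{\E(\S)}$) is obtained by citing the proof of Proposition~3.5 of \cite{DonsigFullerPittsVNAlExInSe}, which carries out essentially the construction you sketch. Two concrete problems with your sketch. First, an inaccuracy: lifts to $\G$ of compatible elements of $\S$ do \emph{not} in general agree on overlaps --- only their Munn classes agree there; the lifts differ by elements of $\P$. So the disjointification must be performed in $\S$ (equivalently, in the complete Boolean algebra $\proj(\D)$), and only then does one choose a lift of each orthogonal piece; since $q$ is idempotent separating, $q(x)=0$ forces $x=0$, so orthogonality in $\S$ gives genuine operator orthogonality of the chosen lifts, all cross terms vanish, and the SOT sum is a partial isometry normalizing $\D$. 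Second, the step you call the ``main obstacle'' is indeed the substantive one and is not mere bookkeeping: upper bounds in $\S$ need not lift to upper bounds in $\G$, so showing that the Munn class of the sum is the \emph{least} upper bound requires fundamentality of $\S$ together with the fact that conjugation $e\mapsto vev^*$ by an element of $\G\N(\M,\D)$ is a normal map on $\proj(\D)$ and therefore respects arbitrary joins. Moreover, being a complete Boolean inverse monoid asks for more than the existence of compatible joins: one must also check that multiplication distributes over them. None of this is fatal --- your strategy is the right one, and it is what the cited proposition does --- but as written your proof of (c) is incomplete precisely at its load-bearing step.
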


\begin{proof} Obviously, $\P\subseteq \G$ and by definition, $\P$ is
  an $\N$-Clifford inverse monoid.  If $v\in \G$, then
$v^*v\in\D$, so every idempotent of $\G$ is a projection in $\D$.
Also,
every projection in $\D$ is an idempotent in $\G$.  It follows that
$\G$ and $\P$ are von Neumann regular monoids for which the
idempotents commute, so both are inverse monoids and
\[\E(\P)=\E(\G)=\text{proj}(\D).\]

If $v\in\P$, then $(v,vv^*)\in R_M$, so every element of $\P$ is
Munn-related to an idempotent.  On the other hand, suppose $v\in
\G\N(\M,\D)$ is Munn-related to the idempotent $e$.  Then $vv^*=e$.
Let $p$ be a projection in $\D$.  Then
\[vp=vpv^*v=epv=pev=pv.\] Hence $v\in \N$, and so $v\in\P$.   Thus,
$\P$ is the set of elements of $\G$ Munn related to an idempotent.

We have already observed that $\S$ is a fundamental inverse monoid,
and it clearly contains a zero element $0$.  As the Munn congruence is
idempotent separating, $\E(\S)$ is isomorphic to $\E(\P)$.  The proof
that $\S$ is a Cartan inverse monoid now follows exactly as in the
proof of~\cite[Proposition~3.5]{DonsigFullerPittsVNAlExInSe}.
\end{proof}

As noted in the introduction, any inverse semigroup $G$ may be represented
as an extension of a fundamental inverse semigroup $S$  by a Clifford
inverse semigroup $C$.  Indeed, $C$ may be taken to be the set
of elements of $G$ which are Munn-related to an idempotent, and $S$ is
the quotient of $G$ by the Munn relation.    We apply this
construction to the inverse semigroup $\G$ of
Proposition~\ref{propextension} to obtain the class of extensions
studied in this paper.

\begin{definition} Let $(\M,\N,\D)$ be a \ct.  Put $\G:=\G\N(\M,\D)$,
$\P:=\G\N(\N,\D)$, $\S:=\G/R_M$, and let $q:\G\rightarrow \S$ the
quotient map.  The extension
\begin{equation}\label{ext} \P\hookrightarrow \G\overset{q}\twoheadrightarrow \S.
\end{equation} is called the \textit{extension associated to
$(\M,\N,\D)$}.
\end{definition}

\begin{example}\label{ex: direct int 2} We return to the context of
  Example~\ref{2algagree}(b), that is, of a Cartan triple having the
  form $(\N,\N,\D)$.  In this setting, $\P=\G$ consists of the partial
  isometries in $\N$ whose initial and final spaces coincide and
  belong to $\D$; $\S$ is the projection lattice of $\D$; and $q$ is
  the map $v\mapsto v^*v$.  Note that $\N$ is the linear span of $\P$.
  When $\N_*$ is separable and $\N$ is identified as the direct
  integral $\int^\oplus_X \N_x\, d\mu(x)$, we may view the extension
  $\P\hookrightarrow \P\overset{q}\twoheadrightarrow\S$ as giving a description of
  the direct integral in terms of the linear span of
  \[\left\{ f\in\int^\oplus_X \M_x\, d\mu(x): f(x)\in \U(\M_x)\cup
      \{0\} \text{ for almost every $x$}\right\}.\] The extension
  approach encodes the measure theory into $\D$, and is a more
  operator theoretic view of $\N$ as opposed to the point based view
  of $\N$ as a direct integral.
\end{example}

In the study of extensions, it is often useful to choose a section $j$
for the quotient map $q$, that is, $j$ is a map such that
$q\circ j=\text{id}|_\S$.  In our context, we will frequently need a
section which is \textit{order preserving} in the sense that $j(1)=1$
and whenever $s, t\in\S$ and $s\leq t$, we have $j(s)\leq j(t)$
(see~\cite[Definition~4.1]{DonsigFullerPittsVNAlExInSe}).  Most of the
following result was proved in \cite{DonsigFullerPittsVNAlExInSe} when
$\P$ is the set of partial isometries in $C^*(\E(\S))$, but the same
proof holds for extensions of $\S$ by $\N$-Clifford inverse monoids
considered here.  

Recall that $q|_{\E(\G)}$ is a complete Boolean
algebra isomorphism of $\E(\G)$ onto $\E(\S)$.
Also, as observed in~\cite[Remark~4.8]{DonsigFullerPittsVNAlExInSe},
for any $s, t\in\S$, $(s^\dag t\wedge 1)$ is the source idempotent for
$s\wedge t$, that is, 
\begin{equation}\label{sourceidem}
(s\wedge t)^\dag (s\wedge t)=s^\dag t\wedge 1.
\end{equation}

\begin{proposition}[c.f.~{\cite[Proposition~4.6]{DonsigFullerPittsVNAlExInSe}}]\label{P: order per sec} Let $\P\hookrightarrow
  \G\overset{q}\twoheadrightarrow \S$ be an extension of the Cartan
  inverse monoid $\S$ by the $\N$-Clifford inverse monoid $\P$.  The
  map $(q|_{\E(\S)})^{-1}$ extends to an order preserving section
  $j:\S\rightarrow \G$ for $q$.  This section has the property that
  for $s_1, s_2\in\S$,
\begin{equation}\label{ops1} j(s_1)^\dag j(s_2) j(s_1^\dag s_2\wedge
1)=j(s_1^\dag s_2\wedge 1).
\end{equation}
\end{proposition}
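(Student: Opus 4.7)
The plan is to first build an order-preserving section $j\colon\S\to\G$ extending the inverse of $q|_{\E(\G)}\colon\E(\G)\to\E(\S)$ by a Zorn's lemma argument, and then to derive the identity \eqref{ops1} as a formal consequence of order-preservation together with the formula \eqref{sourceidem}.

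For the construction, consider the poset $\mathcal{J}$ of pairs $(T,j_T)$ where $T\subseteq\S$ is downward and meet closed, $T\supseteq\E(\S)$, and $j_T\colon T\to\G$ is an order-preserving section that agrees with $(q|_{\E(\G)})^{-1}$ on $\E(\S)$; order $\mathcal{J}$ by extension. The pair $(\E(\S),(q|_{\E(\G)})^{-1})$ lies in $\mathcal{J}$, and unions of chains remain in $\mathcal{J}$, so Zorn's lemma produces a maximal $(T^*,j^*)$. I would then show $T^*=\S$: given $s_0\in\S\setminus T^*$, extend $j^*$ to the meet closure of $T^*\cup\{s_0\}$. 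The critical structural input is that two lifts of the same element of $\S$ differ by multiplication by an element of $\P$; this is because, by Proposition~\ref{propextension}, the elements of $\G$ Munn-related to an idempotent are exactly those of $\P$. Since $\E(\G)=\E(\P)\subseteq\Z(\N)$ commutes with every element of $\P$, the fiberwise corrections needed to make a candidate lift of $s_0$ match $j^*$ on $\{t\in T^*\colon t\le s_0\}$ can be assembled coherently using the completeness of the Boolean lattice $\E(\S)\cong\E(\G)$, along the lines of \cite[Proposition~4.6]{DonsigFullerPittsVNAlExInSe}. I expect this coherent-gluing step to be the main obstacle: one must verify that the independently obtained fiberwise adjustments agree on overlaps $t\wedge t'$ and hence assemble into a single element of $\G$, after which maximality of $(T^*,j^*)$ is contradicted unless $T^*=\S$ already.

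Once $j$ is in hand, \eqref{ops1} follows formally. Set $e:=s_1^\dag s_2\wedge 1$; by \eqref{sourceidem}, $e=(s_1\wedge s_2)^\dag(s_1\wedge s_2)$, so via the isomorphism $q|_{\E(\G)}\colon\E(\G)\to\E(\S)$ we obtain $j(e)=j(s_1\wedge s_2)^\dag j(s_1\wedge s_2)$. Since $j$ is order-preserving and $s_1\wedge s_2\le s_i$ for $i=1,2$, the fact that in an inverse semigroup $a\le b$ holds if and only if $a=b(a^\dag a)$ yields $j(s_1\wedge s_2)=j(s_1)j(e)=j(s_2)j(e)$. Multiplying the equality $j(s_1)j(e)=j(s_2)j(e)$ on the left by $j(s_1)^\dag$ gives $j(s_1^\dag s_1)j(e)=j(s_1)^\dag j(s_2)j(e)$, and since $e\le s_1^\dag s_1$ (a consequence of $s_1\wedge s_2\le s_1$), the product $j(s_1^\dag s_1)j(e)$ equals $j(e)$ by the Boolean-lattice isomorphism $(q|_{\E(\G)})^{-1}$, completing the identity.
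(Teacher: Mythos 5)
Your proof of \eqref{ops1} is correct and is essentially the paper's own computation: both arguments combine \eqref{sourceidem} with order-preservation of $j$ applied to $s_1\wedge s_2\leq s_i$, $i=1,2$, and the fact that $j|_{\E(\S)}$ inverts the idempotent isomorphism $q|_{\E(\G)}$, so that your left multiplication of $j(s_1)j(e)=j(s_2)j(e)$ by $j(s_1)^\dag$ is only a cosmetic rearrangement of the paper's manipulation $\bigl(j(s_1\wedge s_2)j(s_1\wedge s_2)^\dag j(s_1)\bigr)^\dag j(s_1\wedge s_2)$. As for the existence of the order-preserving section, the paper gives no proof at all---it cites \cite[Proposition~4.6]{DonsigFullerPittsVNAlExInSe} and asserts the argument there carries over to $\N$-Clifford $\P$---so your Zorn's-lemma sketch, with its acknowledged incompleteness at the gluing step, is not in conflict with anything the paper proves and does not change the verdict.
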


\begin{proof} Equation~\eqref{ops1} was not proved in
\cite{DonsigFullerPittsVNAlExInSe}, so we provide a proof here.  
Using~\eqref{sourceidem}, observe
\begin{align*} j(s_1)^\dag j(s_2) j(s_1^\dag s_2\wedge 1) &=
j(s_1)^\dag j(s_1\wedge s_2)\\ &= \left(j(s_1\wedge s_2) j(s_1\wedge
s_2)^\dag j(s_1)\right)^\dag j(s_1\wedge s_2)\\ &=j(s_1\wedge
s_2)^\dag j(s_1\wedge s_2)=j(s_1^\dag s_2\wedge 1). \qedhere
\end{align*}
\end{proof}

\begin{definition} We say that the \ct s $(\M_1,\N_1,\D_1)$ and
$(\M_2,\N_2,\D_2)$ are \textit{isomorphic} if there exists a normal
$*$-isomorphism $\theta:\M_1\rightarrow\M_2$ such that
$\theta(\D_1)=\D_2$.
\end{definition}

Our goal is to show that \ct s, up to isomorphism, are uniquely
determined by their associated extensions, up to equivalence.  We do
this in Theorem~\ref{T: same data}.  We first need some technical
lemmas.  

Throughout the remainder of the section, fix an order-preserving
section $j$ for the extension
$\P\hookrightarrow\G\overset{q}\twoheadrightarrow \S$ associated to the \ct\ $(\M,\N,\D)$.

\begin{lemma}\label{Emap} Let $(\M,\N,\D)$ be a \ct\ with conditional
expectation $E:\M\rightarrow \N$.  Let $\P\hookrightarrow \G\overset{q}\twoheadrightarrow \S$ be the associated extension.  Then $E(\G)=\P$.
Further, for $v\in\G\N(\M,\D)$,
\[E(v)=ve,\dstext{where} e=j(q(v)\wedge 1)\in\E(\G).\] Also, $E$ preserves the natural inverse semigroup partial order
on $\G$ and $\P$ in the sense that if $v,w\in\G$ with $v\leq w$, then
$E(v)\leq E(w)$.
\end{lemma}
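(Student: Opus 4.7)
My plan is to prove the formula $E(v) = ve$ first (from which $E(\G) = \P$ is immediate) and to dispatch the order-preservation claim separately via the bimodule structure of $E$. The order claim is essentially immediate: if $v \leq w$ in $\G$, write $v = wf$ for some $f \in \E(\G) = \proj(\D) \subseteq \N$; then right $\N$-linearity of $E$ gives $E(v) = E(w) f$, witnessing $E(v) \leq E(w)$ in $\P$ (using $\E(\G) = \E(\P)$).

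For the identity $E(v) = ve$, first observe that $ve \in \P$. Since $q(v) \wedge 1$ is idempotent with $q(v) \wedge 1 \leq q(v)$, we have $q(v) (q(v) \wedge 1) = q(v) \wedge 1$, so $q(ve) = q(v) q(e) = q(v) \wedge 1 \in \E(\S)$, and Proposition~\ref{propextension}(b) places $ve \in \P$; hence $E(ve) = ve$. It therefore suffices to prove $E(u) = 0$ for $u := v(1-e) = v - ve$. A short computation in $\S$ yields $q(u) \wedge 1 = 0$: any idempotent $f \leq q(u) = q(v) (1 - q(e))$ lies below $q(v)$ (so below $q(v) \wedge 1 = q(e)$) and is orthogonal to $q(e)$ (because $q(u) q(e) = 0$), forcing $f = 0$.

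The crux is therefore the implication ``$u \in \G$ with $q(u) \wedge 1 = 0 \Rightarrow E(u) = 0$''. Since $u$ normalizes $\D$, $ud = \tau_u(d) u$ with $\tau_u(d) := u d u^* \in \D$; applying the $\N$-bimodule map $E$ gives $E(u) d = \tau_u(d) E(u)$, and combined with $E(u) d = d E(u)$ (from $E(u) \in \N$ and $d \in \Z(\N)$) we obtain $(d - \tau_u(d)) E(u) = 0$ for every $d \in \D$. Assume $E(u) \neq 0$. By Kadison--Schwarz, $E(u) E(u)^* \leq E(uu^*) = uu^*$, so the left support projection $q_L \in \N$ of $E(u)$ is nonzero and $q_L \leq uu^*$, and functional calculus propagates the identity to $(d - \tau_u(d)) q_L = 0$. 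Let $p^* \in \D$ be the smallest projection with $p^* q_L = q_L$ (the central cover of $q_L$ in $\D \subseteq \Z(\N)$); $p^*$ is nonzero with $p^* \leq uu^*$, and the abelian equivalence ``$a q_L = 0 \Leftrightarrow a p^* = 0$ for $a \in \D$'' upgrades the identity to $(d - \tau_u(d)) p^* = 0$ for all $d \in \D$. Specialising to $d \leq 1 - u^*u$ (where $\tau_u(d) = 0$) forces $p^* \leq u^*u$, and substituting $d \to u^* d u$ for $d \leq uu^*$ yields $u^* d u \cdot p^* = d p^*$; a short verification using $du = u (u^* d u)$ and $ud = \tau_u(d) u$ then shows $u p^*$ commutes with every $d \in \D$, so $u p^* \in \N$ and hence $u p^* \in \P$. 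Then $q(u p^*)$ is a nonzero idempotent of $\S$ below $q(u)$, contradicting $q(u) \wedge 1 = 0$; thus $E(u) = 0$, and $E(v) = E(ve) + E(v - ve) = ve$. The equality $E(\G) = \P$ follows: $E(\G) \subseteq \P$ by the formula, and $\P \subseteq E(\G)$ holds since $E|_\N = \id_\N$.

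The main obstacle is the extraction in the previous paragraph of a nonzero idempotent of $\S$ (equivalently, a nonzero partial isometry in $\P$) from the purely operator-theoretic identity $(d - \tau_u(d)) E(u) = 0$ on $\N$. The containment $\D \subseteq \Z(\N)$ is used in an essential way, both for the existence and usefulness of the $\D$-central cover $p^*$ and for the direct verification that $u p^* \in \N$.
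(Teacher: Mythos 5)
Your proposal is correct, and for the core identity $E(v)=ve$ it takes a genuinely different route from the paper's. The paper applies Frol\'{i}k's Theorem (\cite[Proposition~2.11A]{PittsStReInI}) to the partial automorphism $\theta_v(d)=vdv^*$ of $\D$, producing orthogonal projections $e_0,e_1,e_2,e_3\in\E(\G)$ with join $v^*v$ such that $q(ve_0)\in\E(\S)$ and $(ve_i)^2=0$ for $i=1,2,3$; it then disposes of the square-zero pieces by the one-line computation $E(w)=ww^*E(w)=E(w)ww^*=E(www^*)=0$, concludes $E(v)=ve_0$, and only afterwards identifies $q(e_0)=q(v)\wedge 1$ as the fixed ideal of $\theta_v$. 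You instead start from $e=j(q(v)\wedge 1)$, reduce to the implication that $q(u)\wedge 1=0$ forces $E(u)=0$ for $u=v(I-e)$, and prove that implication by hand: assuming $E(u)\neq 0$, the relation $(d-\tau_u(d))E(u)=0$, Kadison--Schwarz, and the smallest projection $p^*\in\proj(\D)$ dominating the range support of $E(u)$ yield a nonzero $up^*\in\P$ with $0\neq q(up^*)\leq q(u)\wedge 1$, a contradiction. In effect you re-derive, by an elementary support-projection argument that uses $\D\subseteq\Z(\N)$ essentially, precisely the fixed-part dichotomy the paper imports from Frol\'{i}k's theorem. What each buys: the paper's argument is shorter given the cited machinery, and its square-zero decomposition recurs later (the same Frol\'{i}k decomposition reappears in the proof of Proposition~\ref{Dqc=Nq}); yours is self-contained at the operator-algebra level, needs no Stone-space input, and produces $e=j(q(v)\wedge 1)$ directly rather than identifying it post hoc. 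Your treatment of $E(\G)=\P$ and of order preservation (writing $v=wf$ with $f\in\E(\G)$ and using $E(v)=E(w)f$) coincides with the paper's.
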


\begin{proof} Each $v\in \G$ induces a normal $*$-isomorphism $\theta_v$ from
$v^*v\D$ to $vv^*\D$, given by $\theta_v(d) = vdv^*$.  Applying Frol\'{i}k's
Theorem~\cite[Proposition~2.11A]{PittsStReInI} to $\theta_v$ we may find
elements $e_0, e_1, e_2, e_3\in\E(\G)$ such that
\begin{enumerate}
\item for $i \neq j$, $e_i\wedge e_j =0$;
\item $e_0\vee e_1 \vee e_2\vee e_3=v^*v$;
\item for $i=1,2,3$, $(ve_i)^2=0$; and
\item $q(ve_0)\in \E(\S).$
\end{enumerate} If $w\in\G$ and $w^2=0$, then
\[E(w)=ww^*E(w)=E(w)ww^*=E(www^*)=0.\] It follows that
\[E(v)=E(ve_0)=ve_0.\]
Let $e:=e_0$.  Then $q(e)=q(v)\wedge 1$
because $e_0$ corresponds to the ideal of $\D$ consisting of all
elements fixed by $\theta_v$.

Finally, if $v,w\in \G$ and $v\leq w$, we may find $f\in\E(\G)$ so
that $v=wf$.  Then $E(v)=E(w)f$, so $E(v)\leq E(w)$.
\end{proof}

\begin{lemma}\label{righton} Let $(\M,\N,\D)$ be a \ct, and suppose
$y$ belongs to the linear span of $\G\N(\M,\D)$.  Then there exists a
finite set $\{w_k\}_{k=1}^m\subseteq \G\N(\M,\D)$ such that
$E(w_j^*w_k)=0$ for $j\neq k$ and
\begin{equation}\label{righton1} y=\sum_{k=1}^m w_kE(w_k^*y).
\end{equation}
\end{lemma}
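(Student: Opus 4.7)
The plan is to prove the lemma by induction on the number $n$ of summands in a representation $y = \sum_{i=1}^n v_i$ with $v_i \in \G := \G\N(\M,\D)$. The inductive step is a Gram--Schmidt-type orthogonalization: assuming the conclusion for $y' := \sum_{i=1}^{n-1} v_i$ with family $\{w_k\}_{k=1}^{m'}$, I will adjoin one more element $w_{m'+1}$ capturing the part of $v_n$ not ``seen'' by the $w_k$'s.

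For the base case $n=1$, take $w_1 = v_1$, so $w_1 E(w_1^* y) = v_1 v_1^* v_1 = v_1$. For the inductive step, apply Lemma~\ref{Emap} to each $w_k^* v_n \in \G$ to produce a projection $e_k \in \D$ with $E(w_k^* v_n) = w_k^* v_n \cdot e_k$ and fixed-point relation $w_k^* v_n e_k v_n^* w_k = e_k$; in particular $e_k \leq v_n^* w_k w_k^* v_n$. This containment yields $v_n e_k v_n^* \leq w_k w_k^*$ (using that $v_n v_n^* \in \D$ commutes with $w_k w_k^*$), whence the key algebraic identity
\[ w_k E(w_k^* v_n) = (w_k w_k^*)\, v_n e_k = v_n e_k. \]

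Next, the $e_k$'s are pairwise orthogonal. If $p := e_k \wedge e_l \neq 0$, then the containment argument above applied to $p \leq e_k$ and $p \leq e_l$ shows $v_n p v_n^* = w_k p w_k^* = w_l p w_l^*$, so $(w_k^* w_l)\, p\, (w_k^* w_l)^* = w_k^*(w_l p w_l^*) w_k = w_k^* w_k\, p\, w_k^* w_k = p$ (using $p \leq w_k^* w_k$). Thus $p$ is a non-zero fixed idempotent of $w_k^* w_l$, forcing $E(w_k^* w_l) \neq 0$ by Lemma~\ref{Emap} and contradicting the inductive hypothesis.

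Put $e := \sum_k e_k \in \D$ and $w_{m'+1} := v_n(v_n^* v_n - e) \in \G$ (omitted if zero). By $\N$-bimodularity of $E$ and $e_k \leq e$, one checks $E(w_k^* w_{m'+1}) = w_k^* v_n e_k (v_n^* v_n - e) = 0$, extending the orthogonality. The inductive expansion $y' = \sum_k w_k E(w_k^* y')$ together with $\N$-bimodularity then gives $E(w_{m'+1}^* y') = \sum_k E(w_{m'+1}^* w_k) E(w_k^* y') = 0$, so $E(w_{m'+1}^* y) = v_n^* v_n - e$. Assembling the pieces via the identity $w_k E(w_k^* v_n) = v_n e_k$:
\[ \sum_{k=1}^{m'+1} w_k E(w_k^* y) = y' + \sum_k v_n e_k + v_n(v_n^* v_n - e) = y' + v_n e + v_n(v_n^* v_n - e) = y. \]
The main obstacle is the identity $w_k E(w_k^* v_n) = v_n e_k$: one might naively expect $w_k e_k = v_n e_k$, but these two partial isometries can differ by a Clifford-type ``phase'' in $\P$. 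The correct statement absorbs that phase into the product with $w_k$, and once it is in hand the $e_k$-orthogonality argument — and the reconstruction of $y$ — follow immediately.
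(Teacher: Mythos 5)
Your induction-plus-orthogonalization strategy is genuinely different from the paper's proof, which never orthogonalizes elementwise in $\M$: the paper passes to the supports $s_i=q(v_i)$ in $\S$, applies the meet-disjoint refinement lemma \cite[Lemma~4.15]{DonsigFullerPittsVNAlExInSe} there, and takes $w_k:=j(t_k)$ for the refining family $\{t_k\}$. Your base case, your key identity $w_kE(w_k^*v_n)=v_ne_k$, and the assembly of the inductive step from it are all correct. However, the orthogonality argument for the $e_k$'s has a genuine gap. You conclude: ``$p$ is a non-zero fixed idempotent of $w_k^*w_l$, forcing $E(w_k^*w_l)\neq 0$ by Lemma~\ref{Emap}.'' That implication is false. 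In the Cartan pair $(M_2(\bbC),\D)$ with $\D$ the diagonal, the flip $v=\left(\begin{smallmatrix}0&1\\1&0\end{smallmatrix}\right)$ lies in $\G\N(\M,\D)$ and satisfies $vIv^*=I$, yet $E(v)=0$. The point is that Lemma~\ref{Emap} expresses $E(v)=ve_0$ where $e_0$ is the largest projection such that $\theta_v=v(\cdot)v^*$ fixes the ideal $\D e_0$ \emph{pointwise}; a single projection $p$ with $vpv^*=p$ (so that $\theta_v$ merely maps $\D p$ onto itself) gives no lower bound on $e_0$. The same confusion bites one step earlier: from your stated relation $w_k^*v_ne_kv_n^*w_k=e_k$ alone one cannot deduce $w_k^*v_npv_n^*w_k=p$ for $p\le e_k$, which is what you actually use to get $v_npv_n^*=w_kpw_k^*$ (the ``containment argument'' only gives the inequality $v_npv_n^*\le w_kw_k^*$).

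The gap is repairable, because the Frol\'{i}k projection has exactly the stronger property you need: $u_k:=w_k^*v_n$ satisfies $u_kdu_k^*=d$ for \emph{every} $d\in\D e_k$, and $e_k$ is the largest projection in $\D$ with this property (this is how $e_0$ is described in the proof of Lemma~\ref{Emap}). Re-run your computation with an arbitrary $d\in\D p$, $p:=e_k\wedge e_l$, in place of $p$ itself: it yields $v_ndv_n^*=w_kdw_k^*=w_ldw_l^*$ and hence $(w_k^*w_l)d(w_k^*w_l)^*=d$ for all $d\in\D p$, i.e., $\theta_{w_k^*w_l}$ fixes the ideal $\D p$ pointwise. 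Maximality of the Frol\'{i}k projection $f_0$ of $w_k^*w_l$ then gives $p\le f_0$, so $E(w_k^*w_l)=(w_k^*w_l)f_0$ is a partial isometry with initial projection $f_0\ge p\neq 0$, contradicting the inductive hypothesis $E(w_k^*w_l)=0$. With the fixed-point relation stated in this ideal form (quantified over all $d$), your orthogonality claim, and with it the whole induction, goes through, giving a complete proof along a route different from the paper's.
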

\begin{proof} Choose $\{v_j\}_{j=1}^N\subseteq \G\N(\M,\D)$ and
scalars $\{c_j\}_{j=1}^N$ so that $c_jv_j\neq 0$ for each $j$ and
$y=\sum_{j=1}^N c_jv_j$.  Let $s_i:=q(v_i)$ and
apply~\cite[Lemma~4.15]{DonsigFullerPittsVNAlExInSe} to obtain a
finite set $\{t_k\}_{k=1}^m\subseteq \S$ satisfying
\begin{enumerate}
\item for $1\leq j\leq m$, $t_j\neq 0$;
\item for $j\neq k$, $t_j\wedge t_k =0$;
\item for $1\leq j\leq m$ and $1\leq n \leq N$, $t_j\wedge s_n \in
\{0,t_j\}$;
\item for $1\leq j\leq m$ there exists $1\leq n \leq N$ such that
$t_j\wedge s_n=t_j$; and
\item for each $1\leq n \leq N$, $s_n = \bigvee\{t_j \colon t_j\leq
s_n\}$.
\end{enumerate}

Let $w_k:=j(t_k)$.  Lemma~\ref{Emap} implies that $E(w_j^*w_k)=0$ when
$j\neq k$.

For $1\leq n\leq N$, let $I_n:=\{i: t_i\leq s_n\}$.  Given $n$ and
$i\in I_n$, another application of Lemma~\ref{Emap} gives
\[w_iE(w_i^*v_n)=w_iw_i^*v_nj(t_i^\dag s_n\wedge 1)=v_n
(v_n^*w_iw_i^*v_n)j(t_i^\dag s_n\wedge 1)=v_nj(t_j^\dag s_n\wedge
1).\] Since $s_n=\bigvee_{i\in I_n} (t_i\wedge s_n)$, we obtain,
\[v_n=\sum_{i=1}^m w_iE(w_i^*v_n).\] Equation~\eqref{righton1} follows.
\end{proof}

We now recall notation regarding weights on a von Neumann algebra used
in~\cite{TakesakiThOpAlII}.
Suppose $\M$ is a von Neumann algebra and $\phi$ is a weight on $\M$.
Recall that
\begin{align*}
\fp_\phi&:=\{x\in \M_+: \phi(x)<\infty\}, \quad
 \fn_\phi:=\{x\in\M:
\phi(x^*x)<\infty\}\text{ and}\\ \fm_\phi&:=\left\{\sum_{k=1}^N y_k^*x_k:
n\in \bbN,  x_k, y_k \in \fn_\phi\right\}.
\end{align*}
By~\cite[Lemma~VII.1.2]{TakesakiThOpAlII}, $\fp_\phi$ is a hereditary
convex cone in $\M_+$, $\fn_\phi$ is a left ideal of $\M$, 
$\fm_\phi$ is a hereditary $*$-algebra of $\M$, and every element of
$\fm_\phi$ is a linear combination of four elements of $\fp_\phi$.
The semi-cyclic representation $\pi_\phi$ of $\M$ on the Hilbert space
$\H_\phi$ associated to $\phi$ will be
denoted $(\pi_\phi,\H_\phi, \eta_\phi)$.  When $\phi$ is faithful,
normal and semi-finite, $\pi_\phi$ is a faithful, normal representation
of $\M$.

\begin{lemma}\label{semifinite} Suppose $\M$ is a von Neumann algebra,
$\L\subseteq \M$ is a von Neumann subalgebra, and there exists a
faithful normal conditional expectation $E:\M\rightarrow \L$.  Let
$\psi$ be a faithful, normal, semi-finite weight on $\L$ and let
$\phi:=\psi\circ E$.  Then $\phi$ is a faithful normal semi-finite
weight on $\M$.
\end{lemma}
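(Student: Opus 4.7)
The plan is to verify the three defining properties of a faithful, normal, semi-finite weight for $\phi=\psi\circ E$ separately. First, $\phi$ is plainly a weight on $\M$, since $E$ is linear and positive and $\psi$ takes values in $[0,\infty]$ on $\L_+$. So it remains to establish faithfulness, normality, and semi-finiteness. Faithfulness and normality will be immediate compositional arguments; the substantive point is semi-finiteness.

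For faithfulness, suppose $x\in\M_+$ with $\phi(x)=0$. Then $E(x)\in\L_+$ and $\psi(E(x))=0$, so faithfulness of $\psi$ on $\L_+$ gives $E(x)=0$, and faithfulness of $E$ then gives $x=0$. For normality, let $(x_\alpha)$ be an increasing bounded net in $\M_+$ with supremum $x\in\M_+$. Since $E$ is positive and normal, $E(x_\alpha)$ is an increasing net in $\L_+$ with supremum $E(x)$; since $\psi$ is normal, $\psi(E(x_\alpha))\uparrow\psi(E(x))$, i.e., $\phi(x_\alpha)\uparrow\phi(x)$.

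The main step is semi-finiteness, which requires showing that $\fp_\phi$ is weak-$*$ dense in $\M_+$. Using the semi-finiteness of $\psi$ (and a standard argument via spectral projections of elements of $\fp_\psi$), one obtains an increasing net of projections $(e_\alpha)$ in $\L$ with $\psi(e_\alpha)<\infty$ for all $\alpha$ and $e_\alpha\uparrow 1$ strongly. Given any $x\in\M_+$, set $x_\alpha:=e_\alpha x e_\alpha$. Because $E$ is an $\L$-bimodule map,
\[
E(x_\alpha)=e_\alpha E(x)e_\alpha\leq \|E(x)\|\,e_\alpha,
\]
so $\phi(x_\alpha)=\psi(E(x_\alpha))\leq \|E(x)\|\,\psi(e_\alpha)<\infty$, and hence $x_\alpha\in\fp_\phi$. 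Since the net $(x_\alpha)$ is uniformly bounded by $\|x\|$ and $e_\alpha\to 1$ strongly, $x_\alpha\to x$ $\sigma$-strongly, hence weak-$*$. This shows $\fp_\phi$ is weak-$*$ dense in $\M_+$, completing the proof.

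No step is a genuine obstacle; the only care needed is to invoke the standard fact that a faithful, normal, semi-finite weight on $\L$ admits an increasing approximate identity by projections of finite weight, and to use the $\L$-bimodule property of $E$ to transfer finiteness from $\psi$ to $\phi$.
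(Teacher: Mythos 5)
Your overall architecture is the same as the paper's: faithfulness and normality are the easy compositional facts you give, and semi-finiteness is proved by compressing an arbitrary positive element of $\M$ by an approximate identity of finite-$\psi$-weight elements of $\L$ and invoking the $\L$-bimodule property of $E$. The gap is in how you produce that approximate identity. You assert as a ``standard fact'' that a faithful normal semi-finite weight $\psi$ on $\L$ admits an \emph{increasing} net of \emph{projections} $e_\alpha\uparrow 1$ with $\psi(e_\alpha)<\infty$, obtained ``via spectral projections of elements of $\fp_\psi$.'' That argument does not deliver an increasing net. Spectral projections $\chi_{[\eps,\infty)}(a)\leq a/\eps$ of elements $a\in\fp_\psi$ do give finite-weight projections whose supremum is $1$, but to organize finite-weight projections into an increasing net one needs some form of upward directedness, and for a non-tracial weight this fails: the join of two finite-weight projections need not be dominated by any finite-weight projection. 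Concretely, on $\L=\B(\H)$ with orthonormal basis $\{e_n,f_n\}_{n\geq 1}$, let $\psi=\Tr(h^{1/2}\cdot\,h^{1/2})$ where $h^{1/2}e_n=n^{-1}e_n$ and $h^{1/2}f_n=4^nf_n$; the projections onto $\overline{\spn}\{e_n: n\geq 1\}$ and onto $\overline{\spn}\{\cos\theta_n e_n+\sin\theta_n f_n : n\geq 1\}$ (with $\sin\theta_n=8^{-n}$) both have finite weight, yet their join is $I$, which has infinite weight, so no finite-weight projection dominates both. This is precisely the trace intuition ($\tau(p\vee q)\leq\tau(p)+\tau(q)$) failing for weights; the projection statement you invoke is not standard, and its truth for general $\psi$ is at best unclear.

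The gap is repairable, and the repair is essentially the paper's proof. First, note your compression argument never uses that the net is increasing, only that $e_\alpha\to 1$ strongly and $\psi(e_\alpha)<\infty$. Second, a net of finite-$\psi$-weight elements of $\L$ converging $\sigma$-strongly to $I$ does exist, but the natural construction yields positive contractions rather than projections: since $\fm_\psi$ is a $\sigma$-weakly dense $*$-subalgebra of $\L$ (this is what semi-finiteness says), the Kaplansky density theorem gives a net $(x_\lambda)$ in $\fp_\psi$ with $0\leq x_\lambda\leq I$ and $x_\lambda\to I$ $\sigma$-strongly. This is what the paper uses: for $z\in\M_+$ one has $E(x_\lambda z x_\lambda)=x_\lambda E(z)x_\lambda\leq\norm{z}\,x_\lambda^2$, and $x_\lambda^2\leq x_\lambda\in\fp_\psi$ together with the heredity of $\fp_\psi$ gives $x_\lambda z x_\lambda\in\fp_\phi$, while $x_\lambda z x_\lambda\to z$ $\sigma$-strongly. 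If you insist on projections, you may set $f_\lambda:=\chi_{[1/2,1]}(x_\lambda)\leq 2x_\lambda$; these lie in $\fp_\psi$ and converge strongly to $I$ (though still not increasingly), and then your compression argument runs verbatim.
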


\begin{proof} 
Since $\fm_\psi$ is a $*$-subalgebra of $\L$, a
corollary of the Kaplansky density theorem shows there exists a net
$(x_\lambda)$ in $\fp_\psi$ with $0\leq x_\lambda \leq I$ which
converges $\sigma$-strongly to $I$.  For any $z\in \M_+$, $x_\lambda
zx_\lambda\leq \norm{z} p_\lambda^2\in\fp_\psi$.  Therefore,
$x_\lambda zx_\lambda\in \fp_\phi$.  Since
$\lim^{\sigma\text{-strong}} x_\lambda zx_\lambda =z$, we have that
$\fp_\phi$ generates $\M$.  That is, $\phi$ is semi-finite on $\M$.
\end{proof}

The following result is the key technical tool used in the proof of
Theorem~\ref{T: same data}.

\begin{lemma}\label{fndense} Let $(\M,\N,\D)$ be a \ct, suppose $\psi$
is a faithful normal semi-finite weight on $\N$ and let
$\phi=\psi\circ E$.  Then the linear span of $\{\eta_\phi(vn):
v\in\G\N(\M,\D) \text{ and } n\in\fn_\psi\}$ is dense in $\H_\phi$.
\end{lemma}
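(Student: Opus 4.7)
The plan is to combine a right-truncation by a spectral projection with Kaplansky density. By Lemma~\ref{semifinite}, $\phi=\psi\circ E$ is faithful, normal, and semifinite on $\M$, so $\eta_\phi(\fn_\phi)$ is norm-dense in $\H_\phi$. Set $A:=\spn\{vn : v\in\G\N(\M,\D),\ n\in\fn_\psi\}$; a quick check gives $A\subseteq\fn_\phi$, since for $v\in\G\N(\M,\D)$ and $n\in\fn_\psi$ the source projection $v^*v\in\D$ satisfies $(vn)^*(vn)=n^*(v^*v)n\leq n^*n$, so $\phi((vn)^*(vn))\leq\psi(n^*n)<\infty$. Hence it suffices to show $\eta_\phi(x)\in\ol{\eta_\phi(A)}$ for each $x\in\fn_\phi$.

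Fix $x\in\fn_\phi$ and set $a:=E(x^*x)\in\fp_\psi$. First I would truncate on the right: let $f_n:=\chi_{[1/n,\norm{a}]}(a)\in\N$, a spectral projection of $a$ with $f_n\leq na$ (so $f_n\in\fp_\psi$) and $f_n\nearrow\supp(a)$. Using $\N$-bimodularity of $E$ and the fact that $f_n$ commutes with $a$,
\[
\phi((x-xf_n)^*(x-xf_n))=\psi((1-f_n)a(1-f_n))=\psi(a)-\psi(af_n)\longrightarrow 0
\]
by normality of $\psi$, giving $\eta_\phi(xf_n)\to\eta_\phi(x)$ in $\H_\phi$.

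Next I would approximate $x$ itself. Since $\spn\G\N(\M,\D)$ is a weak-$*$ dense $*$-subalgebra of $\M$, Kaplansky's density theorem supplies a net $(y_\lambda)\subseteq\spn\G\N(\M,\D)$ with $\norm{y_\lambda}\leq\norm{x}$ and $y_\lambda\to x$ in the $\sigma$-strong-$*$ topology. Writing $y_\lambda=\sum_k c_{k,\lambda}v_{k,\lambda}$ with $v_{k,\lambda}\in\G\N(\M,\D)$, the product $y_\lambda f_n=\sum_k v_{k,\lambda}(c_{k,\lambda}f_n)$ lies in $A$ because $c_{k,\lambda}f_n\in\fp_\psi\subseteq\fn_\psi$. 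Setting $g_\lambda:=E((y_\lambda-x)^*(y_\lambda-x))$, the essential computation is
\[
\phi((y_\lambda f_n-xf_n)^*(y_\lambda f_n-xf_n))=\psi(f_n g_\lambda f_n)=\norm{\pi_\psi(g_\lambda^{1/2})\eta_\psi(f_n)}^2,
\]
after factoring $f_n g_\lambda f_n=(g_\lambda^{1/2}f_n)^*(g_\lambda^{1/2}f_n)$ with $g_\lambda^{1/2}f_n\in\fn_\psi$. The $\sigma$-strong-$*$ convergence $y_\lambda\to x$ forces $(y_\lambda-x)^*(y_\lambda-x)\to 0$ $\sigma$-strongly, normality of $E$ then gives $g_\lambda\to 0$ $\sigma$-weakly, and positivity together with $g_\lambda\leq 4\norm{x}^2$ upgrades this to $g_\lambda^{1/2}\to 0$ $\sigma$-strongly; normality of $\pi_\psi$ drives the right-hand side to zero. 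A diagonal argument combining the two stages places $\eta_\phi(x)$ in $\ol{\eta_\phi(A)}$.

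The main technical obstacle is the weight-theoretic step $\psi(f_n g_\lambda f_n)\to 0$: normal semifinite weights are not $\sigma$-weakly continuous on bounded sets, so the conclusion does not follow directly from $g_\lambda\to 0$. The remedy is the GNS factorization $f_n g_\lambda f_n=(g_\lambda^{1/2}f_n)^*(g_\lambda^{1/2}f_n)$, which transports the question into the Hilbert space $\H_\psi$, where continuity of the normal representation $\pi_\psi$ applied to the fixed vector $\eta_\psi(f_n)$ suffices.
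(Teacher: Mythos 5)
Your proof is correct, and it takes a genuinely different route from the paper's. You prove density directly: truncate $x\in\fn_\phi$ on the right by spectral projections $f_n$ of $E(x^*x)$ (which lie in $\fn_\psi$), approximate $x$ by a bounded net $y_\lambda\in\spn\G\N(\M,\D)$ via Kaplansky density (this is where regularity enters), and control $\phi\bigl((y_\lambda f_n-xf_n)^*(y_\lambda f_n-xf_n)\bigr)=\psi(f_n g_\lambda f_n)$ by the factorization $\psi(f_n g_\lambda f_n)=\norm{\pi_\psi(g_\lambda^{1/2})\eta_\psi(f_n)}^2$, after which normality of $E$ and of $\pi_\psi$ finish the argument; that factorization is exactly the right fix for the failure of $\sigma$-weak continuity of the weight $\psi$ on bounded sets, and your truncation and Kaplansky steps are all sound. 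The paper argues instead by orthogonality: for each $v\in\G\N(\M,\D)$ it builds the projection $P_v$ of $\H_\phi$ onto $\ol{\{\eta_\phi(vn):n\in\fn_\psi\}}$ and shows that no nonzero vector is annihilated by every $P_v$. To handle the unbounded weight, the paper invokes Haagerup's theorem, writing $\psi$ as a supremum of dominated functionals $\tau\in\N_*^+$ and producing contractions $T_\omega:\H_\phi\rightarrow\H_\omega$ (with $\omega=\tau\circ E$) satisfying $\norm{\xi}=\sup_\omega\norm{T_\omega\xi}$ and $T_\omega P_v=P_v^\omega T_\omega$; in each of the bounded GNS spaces $\H_\omega$ the claim is finished using regularity together with the $E$-orthogonal decomposition of Lemma~\ref{righton}. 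Your route is lighter: it avoids both Haagerup's theorem and Lemma~\ref{righton}, needing only Kaplansky density, spectral calculus, and basic properties of normal maps. What the paper's route buys is the family of projections $P_v$ and their intertwining relations, which are reused downstream (Theorem~\ref{BuresApx1} identifies $P_s=WP_{s,\pi_\phi}W^*$, and the remark following Proposition~\ref{PsSOT} restates the present lemma as $\bigvee_{v}P_v=I_{\H_\phi}$), so the published proof manufactures structure that the rest of the paper depends on, while yours establishes the density statement itself with less machinery.
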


\begin{proof} For $m\in\fn_\phi$ and $v\in \G\N(\M,\D)$,
\[\psi(E(m^*v)v^*vE(v^*m))=\psi(E(m^*v)E(v^*m))\leq
\psi(E(m^*vv^*m))\leq \psi(E(m^*m))=\phi(m^*m)<\infty.\] This yields
the following.
\begin{enumerate}
\item $E(\fn_\phi)=\fn_\psi$ (take $v=I$).
\item For $v\in\G\N(\M,\D)$, the map $\M\ni m\mapsto vE(v^*m)$ is
idempotent and leaves $\fn_\phi$ invariant; hence there is a
projection $P_v\in\B(\H_\phi)$ whose action on $\eta_\phi(\fn_\phi)$
is given by $P_v\eta_\phi(m)=\eta_\phi(vE(v^*m))$.  In addition,
notice that $\ran P_v=\overline {\{\eta_\phi(vn): n\in \fn_\psi\}}$.
\end{enumerate} To prove the lemma, it therefore suffices to show that
if $\xi\in\H_\phi$ and $P_v\xi=0$ for every $v\in\G\N(\M,\D)$, then
$\xi=0$.  We begin with a preliminary fact
about approximating norms of vectors in $\H_\phi$.

Let
\[\Phi:=\{\tau\circ E: \tau\in\N_*^+\text{ and } \tau(n)\leq
\psi(n)\text{ for all } 0\leq n\in \N\}.\] Clearly $\Phi\subseteq
\M_*^+$.  For $\omega\in\Phi$, let $(\pi_\omega,\H_\omega,
\eta_\omega)$ be the semi-cyclic representation of $\M$ arising from
$\omega$.  This representation is actually cyclic and $\fn_\omega=\M$
because $\omega$ is a bounded positive linear functional on $\M$.
Define $T_\omega: \eta_\phi(\fn_\phi)\rightarrow \H_\omega$ by
$T_\omega\eta_\phi(m)=\eta_\omega(m)$.  Write $\omega=\rho\circ E$ for
some $\rho\in\N_*^+$.   Then for $m\in\fn_\phi$,
\[\norm{\eta_\omega(m)}^2=\rho(E(m^*m))\leq\psi(E(m^*m))=\norm{\eta_\phi(m)}^2.\]
Thus $T_\omega$ extends to a contraction belonging to $\B(\H_\phi,
\H_\omega)$, which we again denote by $T_\omega$. 

 We claim that for
any $\xi\in\H_\phi$,
\begin{equation}\label{approxi} \norm{\xi}=\sup_{\omega\in\Phi}
\norm{T_\omega\xi}.
\end{equation} To see this, fix $\xi\in\H_\phi$ and choose a real
number $r$ such that $r<\norm{\xi}$.  Let $\eps>0$ satisfy $3\eps <
\norm{\xi}-r$.  Choose $m\in \fn_\phi$ such that
$\norm{\xi-\eta_\phi(m)}<\eps$.  By Haagerup's Theorem
(see~\cite[Theorem~VII.1.11]{TakesakiThOpAlII}),
\[\psi(E(m^*m))=\sup\{\tau(E(m^*m)): \tau\in \N_*^+ \text{ and
}\tau(n)\leq \psi(n) \text{ for all } 0\leq n\in\N\}.\] Hence there
exists $\omega\in\Phi$ such that
\[\norm{T_\omega\eta_\phi(m)}> \norm{\eta_\phi(m)}-\eps.\] Then
\begin{align*} \norm{\xi}&\leq
\norm{\xi-\eta_\phi(m)}+\norm{\eta_\phi(m)} <2\eps
+\norm{\eta_\phi(m)}-\eps\\ &< 2\eps+\norm{T_\omega\eta_\phi(m)}\\
&\leq 3\eps +\norm{T_\omega\xi} < \norm{\xi}-r +\norm{T_\omega\xi},
\end{align*} whence $r<\norm{T_{\omega}\xi}$.  Thus~\eqref{approxi}
holds.

For each $\omega\in \Phi$ and $v\in\G\N(\M,\D)$, let $P_v^\omega$ be
the projection on $\H_\omega$ determined by $\eta_\omega(m)\mapsto
\eta_\omega(vE(v^*m))$, $m\in\M$.  A routine calculation shows that
for every $\omega\in \Phi$, $m\in \fn_\phi$ and $v\in\G\N(\M,\D)$,
$T_\omega P_v\eta_\phi(m)=P_v^\omega T_\omega\eta_\phi(m)$, so
\begin{equation}\label{Pvintertwine} T_\omega P_v=P_v^\omega T_\omega.
\end{equation}

Fix $\omega\in \Phi$.  We claim that if $\zeta\in\H_\omega$ and
$P_v^\omega\zeta=0$ for every $v\in \G\N(\M,\D)$, then $\zeta=0$.
Suppose $\zeta\in \H_\omega$ is such a vector.  Given $\eps>0$, there
exists $x\in \M$ such that $\norm{\zeta-\eta_\omega(x)}<\eps$.  Since
$\D$ is regular in $\M$, and $\omega\in \M_*^+$, there exists
$y\in\spn\G\N(\M,\D)$ such that $\norm{\eta_\omega(x-y)}<\eps$.  By
Lemma~\ref{righton}, there exists a finite $E$-orthogonal set
$\{v_k\}_{k=1}^m\subseteq \G\N(\M,\D)$ such that $y=\sum_{k=1}^m
v_kE(v_k^*y)$.  As $\{P_{v_k}^\omega\}_{k=1}^n$ is a pairwise
orthogonal set of projections, $Q:=\sum_{k=1}^m P_{v_k}^\omega$ is a
projection.  Therefore,
\[\eta_\omega(y)=\sum_{k=1}^m P_{v_k}^\omega
\eta_\omega(y)=Q\eta_\omega(y).\] So,
\begin{align*} \norm{\zeta}&\leq
\norm{\zeta-\eta_\omega(x)}+\norm{\eta_\omega(x-y)}+\norm{\eta_\omega(y)}\\
&< 2\eps +\norm{\eta_\omega(y)-Q\zeta}=2\eps
+\norm{Q(\eta_\omega(y)-\zeta)}\\ &\leq 2\eps
+\norm{\eta_\omega(y)-\zeta}\\ &<4\eps.
\end{align*} Thus the claim holds.

Now suppose $\xi\in\H_\phi$ satisfies $P_v\xi=0$ for every
$v\in\G\N(\M,\D)$.  Then for every $\omega\in\Phi$ and
$v\in\G\N(\M,\D)$,
\[0=T_\omega P_v\xi=P_v^\omega T_\omega\xi.\] Hence $T_\omega\xi=0$
for every $\omega\in\Phi$, so $\xi=0$ by~\eqref{approxi}.  The proof
is now complete.
\end{proof}

We come now to the main theorem of this section.

\begin{theorem} \label{T: same data} 
The Cartan triples $(\M_1,\N_1,\D_1)$ and
$(\M_2,\N_2,\D_2)$ are isomorphic if and only if their associated
extensions, $\P_1\hookrightarrow\G_2\overset{q_1}\twoheadrightarrow \S_2$ and
 $\P_1\hookrightarrow\G_2\overset{q_2}\twoheadrightarrow \S_2$,
 are equivalent.
\end{theorem}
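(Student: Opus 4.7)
The forward direction is essentially automatic. Any normal $*$-isomorphism $\theta : \M_1 \to \M_2$ with $\theta(\D_1) = \D_2$ maps partial isometries normalizing $\D_1$ onto partial isometries normalizing $\D_2$, so $\alpha := \theta|_{\G_1}$ is a semigroup isomorphism $\G_1 \to \G_2$; since $\theta(\N_1) = \theta(\D_1)^c = \D_2^c = \N_2$, its restriction $\underline\alpha := \theta|_{\P_1}$ is extendible. Because $\alpha$ preserves idempotents and the semigroup operation, it preserves the Munn congruence and hence descends to a semigroup isomorphism $\tilde\alpha : \S_1 \to \S_2$; this gives the required equivalence of extensions.

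For the converse, suppose $(\alpha, \tilde\alpha, \underline\alpha)$ implements an equivalence of the two extensions, and let $\theta_0 : \N_1 \to \N_2$ be the normal $*$-isomorphism extending $\underline\alpha$, so $\theta_0(\D_1) = \D_2$. The plan is to lift $\theta_0$ to an isomorphism $\M_1 \to \M_2$ by transporting GNS-type representations. Fix a faithful normal semi-finite weight $\psi_1$ on $\N_1$ and set $\psi_2 := \psi_1 \circ \theta_0^{-1}$. By Lemma~\ref{semifinite}, each $\phi_i := \psi_i \circ E_i$ is a faithful normal semi-finite weight on $\M_i$, with faithful normal semi-cyclic representation $(\pi_{\phi_i}, \H_{\phi_i}, \eta_{\phi_i})$. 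By Lemma~\ref{fndense}, the subspace $D_i := \spn\{\eta_{\phi_i}(vn) : v \in \G_i,\ n \in \fn_{\psi_i}\}$ is dense in $\H_{\phi_i}$.

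The main step is to construct a unitary $U : \H_{\phi_1} \to \H_{\phi_2}$ by setting
$$U\eta_{\phi_1}(vn) := \eta_{\phi_2}\bigl(\alpha(v)\theta_0(n)\bigr), \quad v \in \G_1,\ n \in \fn_{\psi_1},$$
and extending linearly. For $v,w \in \G_1$ and $m, n \in \fn_{\psi_1}$, the $\N_1$-bimodularity of $E_1$ yields
$$\langle \eta_{\phi_1}(vn), \eta_{\phi_1}(wm)\rangle = \psi_1\bigl(m^* E_1(w^*v)\, n\bigr),$$
and similarly on the $\H_{\phi_2}$ side. Since $\psi_2 \circ \theta_0 = \psi_1$, well-definedness and isometry of $U$ reduce to the compatibility identity $\theta_0 \circ E_1 = E_2 \circ \alpha$ on $\G_1$. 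This is the crux of the argument: using Lemma~\ref{Emap}'s formula $E_i(x) = x \cdot j_i(q_i(x) \wedge 1)$ together with the observation that $\alpha \circ j_1 = j_2 \circ \tilde\alpha$ on $\E(\S_1)$ (both sides invert $q$ on idempotents and $q_2 \circ \alpha = \tilde\alpha \circ q_1$ by hypothesis), the identity reduces to $\alpha|_{\P_1} = \underline\alpha$, which holds by $\alpha \circ \iota_1 = \iota_2 \circ \underline\alpha$. The operator $U$ is then well-defined and isometric on $D_1$, extends to all of $\H_{\phi_1}$, and is surjective by symmetry applied to the inverse equivalence.

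To conclude, put $\theta(y) := UyU^*$ on $\pi_{\phi_1}(\M_1) \subseteq \B(\H_{\phi_1})$. A direct computation on $D_2$ shows $\theta(\pi_{\phi_1}(v)) = \pi_{\phi_2}(\alpha(v))$ for every $v \in \G_1$, and since each $n \in \N_1$ is a linear combination of finitely many unitaries from $\U(\N_1) \subseteq \P_1$, the same computation yields $\theta(\pi_{\phi_1}(n)) = \pi_{\phi_2}(\theta_0(n))$ for $n \in \N_1$. As $\spn \G_i$ is weak-$*$ dense in $\M_i$ and $\theta$ is weak-$*$ continuous, we conclude $\theta(\pi_{\phi_1}(\M_1)) = \pi_{\phi_2}(\M_2)$; identifying $\M_i$ with its image under the faithful normal representation $\pi_{\phi_i}$, we obtain a normal $*$-isomorphism $\M_1 \to \M_2$ extending $\theta_0$ and sending $\D_1$ onto $\D_2$. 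The principal obstacle is the compatibility identity $\theta_0 \circ E_1 = E_2 \circ \alpha$ on $\G_1$; once it is in hand, the remainder is standard GNS bookkeeping aided by Lemmas~\ref{fndense} and~\ref{Emap}.
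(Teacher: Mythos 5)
Your proposal is correct and takes essentially the same route as the paper's own proof: the forward direction by restricting the isomorphism, and the converse by transporting the semi-cyclic representations of the weights $\phi_i=\psi_i\circ E_i$ (with $\psi_2=\psi_1\circ\theta_0^{-1}$) via a unitary $U$ built on the dense subspaces supplied by Lemma~\ref{fndense}, with isometry resting on the compatibility identity $E_2\circ\alpha=\theta_0\circ E_1$ on $\G_1$ obtained from Lemma~\ref{Emap}. The only difference is cosmetic: you spell out the derivation of that compatibility identity (via $\alpha\circ j_1=j_2\circ\tilde\alpha$ on idempotents), which the paper asserts directly from Lemma~\ref{Emap}.
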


\begin{proof} It is easy to see that if the triples are isomorphic,
then their associated extensions are equivalent.

Suppose now that the associated extensions are equivalent via the triple of maps $(\underline \alpha, \alpha, \tilde\alpha)$.
Then $\alpha|_{\P_1}= \underline\alpha$, $q_2\circ\alpha=\tilde\alpha \circ q_1$ and $\underline\alpha$ is an extendible isomorphism, say $\underline{\alpha}=\theta|_{\P}$, where $\theta:\N_1\rightarrow \N_2$ is a normal isomorphism with $\theta(\D_1)=\D_2$.
Let $E_i:\M_i\rightarrow \N_i$ be the conditional expectations.  By
Lemma~\ref{Emap},
\[E_2\circ \alpha=(\alpha\circ E_1)|_{\G_1},\dstext{equivalently}
E_2\circ\alpha=(\theta\circ E_1)|_{\G_1}.\]

Let $\psi_1$ be a faithful normal weight on $\N_1$ and let $\psi_2=
\psi_1\circ\theta^{-1}$.  Now let $\phi_i:=\psi_i\circ E_i$.  Then
$\phi_i$ are faithful semi-finite normal weights on $\M_i$.  Let
$(\pi_i, \fH_i, \eta_i)$ be the associated semi-cyclic representations
and let $\fn_i:=\{x\in
\M_i: \phi_i(x^*x)<\infty\}$.  By Lemma~\ref{fndense},
$\spn\{\eta_{\phi_i}(vn): v\in\G\N(\M_i,\D_i)\text{ and
}n\in\fn_{\psi_i}\}$ is dense in $\fH_i$.

Let $n\in\bbN$ and suppose $v_1,\dots, v_n\in\G_1$ and $c_1,\dots ,
c_n\in\fn_{\psi_1}$.  Then $\alpha(v_j)\in \G_2$, and, since
$(\alpha\circ E_1)|_{\G_1}=E_2\circ \alpha$, it follows from the
definition of $\phi_2$ that
\begin{align*} \phi_2\left(\left(\sum_{i=1}^n
\alpha(v_i)\theta(c_i)\right)^*\left(\sum_{i=1}^n
\alpha(v_i)\theta(c_i)\right)\right)
&=\phi_2\left(\sum_{i,j=1}^n\theta(c_i)^*\alpha(v_i^*v_j)\theta(c_j)\right)
\\
&=\psi_2\left(E_2\left(\sum_{i,j=1}^n\theta(c_i)^*\alpha(v_i^*v_j)\theta(c_j)\right)\right)\\
&=\psi_2\left(\sum_{i,j=1}^n\theta(c_i)^*E_2(\alpha(v_i^*v_j))\theta(c_j)\right)\\
&=\psi_2\left(\sum_{i,j=1}^n\theta(c_i)^*\theta(E_1(v_i^*v_j))\theta(c_j)\right)\\
&=\psi_1\left(\sum_{i,j=1}^nE_1(c_i^*v_i^*v_jc_j)\right) \\ &=
\phi_1\left(\left(\sum_{i=1}^n v_ic_i\right)^*\left(\sum_{i=1}^n
v_ic_i\right)\right).
\end{align*} Hence the map
\[\eta_1\left(\sum_{i=1}^n v_ic_i\right) \mapsto
\eta_2\left(\sum_{i=1}^n \alpha(v_i)\theta(c_i) \right)\] extends to a
unitary operator $U:\fH_1\rightarrow \fH_2$.  It is routine to verify
that for $v\in \G_1$, $U\pi_1(v)=\pi_2(\alpha(v))U$.  Therefore the
map $\theta:\M_1\rightarrow \M_2$ given by $\theta(x)=
\pi_2^{-1}(U\pi_1(x)U^*)$ is an isomorphism of $(\M_1,\N_1,\D_1)$ onto
$(\M_2,\N_2,\D_2).$
\end{proof}

\section{Representing an extension}\label{sec: rep G} In this section
we will show how to represent an extension as partial isometries on a
right Hilbert-module.  In Section~\ref{sec: ct from ext} we will show
how this gives rise to a \ct.  
Throughout this section:
\begin{itemize}
\item $\P\hookrightarrow \G\overset{q}\twoheadrightarrow \S$ will be an idempotent
separating extension of the Cartan inverse monoid $\S$ by the
$\N$-Clifford inverse monoid $\P$;
\item $j:\S\rightarrow \G$ will be a fixed order-preserving section (see
  Proposition~\ref{P: order per sec}); and 
\item $\D$ is the von Neumann subalgebra of $\Z(\N)$ generated by
$\E(\P)$.   We will sometimes use the fact that viewed as a \cstaralg, $\D$
is isomorphic to the universal \cstaralg\ $C^*(\widehat{\E(\S)})$ generated by the meet
semilattice $\E(\S)$, see~\cite[Proposition~2.2]{DonsigFullerPittsVNAlExInSe}.
\end{itemize}

We now construct
a right reproducing kernel Hilbert $\N$-module.  We begin by using the
construction of the right reproducing Hilbert $\D$-module as done in
\cite[Section~4.2]{DonsigFullerPittsVNAlExInSe}.
Recall that $j|_{\E(\S)}$ is a complete lattice isomorphism of $\E(\S)$ onto $\E(\P)=\E(\G)\subseteq \D$.
Define $K:\S\times \S\rightarrow \D$ by
\begin{equation*} K(t,s) = j(s^\dag t \wedge 1).
\end{equation*} and for $s\in \S$, define $k_s:\S\rightarrow \D$ by
\begin{equation*} k_s(t)=K(t,s).
\end{equation*} For $d\in\D$ and $s\in\S$, we use $k_sd$ to denote the
map from $\S$ into $\D$ given by $\S\ni t\mapsto k_s(t)d$.  Put
\[\fA_0:=\spn\{k_sd: s\in\S\text{ and }d\in \D\}.\]

Let $u, v\in \fA_0$.  Lemma~4.11 and Proposition~4.12
of~\cite{DonsigFullerPittsVNAlExInSe} show that:
\begin{enumerate}
\item if $u=\sum_{i=1}^nk_{s_i}d_i$ and $v=\sum_{j=1}^n k_{t_j}e_j$,
then the formula
\begin{equation}\label{fancyip} \innerprod{u,v}:=\sum_{i,j=1}^n d_i^*
K(s_i,t_j)e_j
\end{equation} is independent of the choice of the representations for
$u$ and $v$ and determines a well-defined $\D$-valued inner product on
$\fA_0$ which is conjugate linear in the first variable;
\item for every $s\in \S$ and $u\in\fA_0$, $\innerprod{k_s,u}=u(s)$;
\item the completion $\fA_\D$ of $\fA_0$ with respect to this inner
product is a right Hilbert $\D$-module of functions from $\S$ to
$\D$; and
\item $\spn\{k_s: s\in\S\}$ is dense in $\fA_\D$.
\end{enumerate}

Next, we ``fatten'' $\fA_\D$ to incorporate the fact that $\P$ is an
$\N$-Clifford semigroup, not a $\D$-Clifford semigroup as in
\cite{DonsigFullerPittsVNAlExInSe}.  View $\N$ as a right Hilbert
$\N$-module, with $\innerprod{x,y}_\N:=x^*y$.  Define a
$*$-monomorphism $\iota:\D\rightarrow \L(\N)$ by $\iota(d)(x)=dx$
(where $d\in\D$ and $x\in\N$).  Put
\begin{equation}\label{rsdef} \fA:=\fA_\D\otimes_\iota \N,
\end{equation} see \cite[pages~38--44]{LanceHiC*Mo}.  Then $\fA$ is a
right Hilbert $\N$-module.  This is the space on which we shall define
a representation of $\G$.  Note that the inner product on the
algebraic tensor product $\fA_\D\odot_\iota \N$ is
\begin{equation}\label{iprem} \innerprod{\sum_{i=1}^N k_{s_i}\otimes
x_i, \sum_{i=1}^N k_{t_i}\otimes y_i }_{\fA} =\sum_{i,j=1}^N
x_i^*K(s_i,t_j)y_j.
\end{equation} We denote the bounded, adjointable operators on $\fA$
by $\L(\fA)$.

\bigskip We will presently describe the representation of $\G$ on
$\fA$.  First, we need a little more machinery derived from our
extension.  It is usual to describe idempotent-separating extensions
in terms of a cocycle function $\gamma \colon \S \times \S \rightarrow
\P$.  In the case when $\P$ is a abelian this is done explicitly by
Lausch \cite{LauschCoInSe}, leading to a one-to-one correspondence
between extensions and the cohomology group $H^2(\S,\P)$.  In the case
when $\P$ is not abelian D'Alarcao \cite{DAlarcaoIdSeExInSe} has
studied extensions, modelled on the Schreier extensions of groups.
Though no cocycle is explicitly given, the construction again relies on
functions from $\S \times \S$ to $\P$.  In our setting, where we are 
assuming we have an extension $\P \hookrightarrow \G \twoheadrightarrow \S$,
we instead work with a \emph{cocycle-like} function from $\G\times \S$
into $\P$.  This leads to significant computational
simplifications when we define our representation of $\G$.  To our
knowledge, there is not a cohomological description of extensions when
$\P$ is not abelian.  As our cocycle-like function includes
$\G$ a priori, our approach is unlikely to shed further light on that
question.

\begin{definition}\label{D: cocycle like} Define a cocycle-like
function $\sigma \colon \G \times \S \rightarrow \P$ by
\begin{equation*} \sigma(v,s)=j(q(v)s)^\dag vj(s)=j(s^\dag
q(v^\dag))vj(s).
\end{equation*} Since
\begin{equation*} q(\sigma(v,s))=s^\dag q(v^\dag v)s\in \E(\S),
\end{equation*} $\sigma(v,s)\in\P$.  Thus $\sigma$ indeed maps
$\G\times \S$ into $\P$.  Observe also that
\begin{equation}\label{domaincocycle}
\sigma(v,s)^*\sigma(v,s)=j(s^\dag q(v^\dag v)s)=j(s)^\dag v^\dag v
j(s).
\end{equation}
\end{definition}

The following result gives the definition of the representation of
$\G$ in $\L(\fA)$ and  is the analog of
\cite[Theorem~4.16]{DonsigFullerPittsVNAlExInSe} suitable for our
context.  While the outline of the proof is the same as the proof of
\cite[Theorem~4.16]{DonsigFullerPittsVNAlExInSe}, there are
differences.  Due to the importance of the result for our work, we
provide most of the details of the proof.

\begin{theorem}\label{T: Grepdef} For $v\in \G$, $s\in\S$ and $x\in
\N$, the formula,
\[\lambda(v)(k_s \otimes x):= k_{q(v)s} \otimes \cocycle(v,s)x\]
determines a partial isometry $\lambda(v)\in\L(\fA)$.  Moreover,
$\lambda:\G\rightarrow \L(\fA)$ is a one-to-one representation of $\G$
as partial isometries in $\L(\fA)$.
\end{theorem}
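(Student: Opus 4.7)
The plan is to define $\lambda(v)$ first as an operator on the dense subspace of $\fA$ spanned by elementary tensors $k_s\otimes x$ (with $s\in \S$, $x\in \N$), verify that the formula descends to a partial isometry on $\fA$ via a single inner-product identity, and then establish the homomorphism, adjoint, and injectivity properties. Throughout I will write $a:=q(v)$ and use the order-preserving identity~\eqref{ops1}, which via~\eqref{sourceidem} can be rewritten in the convenient form $j(s_1\wedge s_2)=j(s_1)\,K(s_1,s_2)=j(s_2)\,K(s_1,s_2)$, together with the symmetry $K(s,t)=K(t,s)$ (both sides being the idempotent $j(s^\dag t\wedge 1)=j(t^\dag s\wedge 1)$).

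The heart of the argument is an inner-product computation. For finite sums $\xi=\sum_i k_{s_i}\otimes x_i$ and $\eta=\sum_j k_{t_j}\otimes y_j$, formula~\eqref{iprem} gives
\[\innerprod{\lambda(v)\xi,\lambda(v)\eta}_{\fA}=\sum_{i,j} x_i^*\,\sigma(v,s_i)^*\,K(as_i,at_j)\,\sigma(v,t_j)\,y_j.\]
Unpacking $\sigma$, each summand becomes $x_i^*\,j(s_i)^\dag v^\dag\,j(as_i)\,K(as_i,at_j)\,j(at_j)^\dag v\,j(t_j)\,y_j$, and the identity $j(as_i)K(as_i,at_j)=j(as_i\wedge at_j)$ (and its sibling $j(as_i\wedge at_j)=j(at_j)K(as_i,at_j)$) collapses the middle factor $j(as_i)K(as_i,at_j)j(at_j)^\dag$ to a single projection in $\E(\G)$ which reduces back, via~\eqref{ops1} applied to the pair $(s_i,t_j)$, to $K(s_i,t_j)$ times the idempotent $p_v\in \E(\G)$ corresponding to $v^\dag v$. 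This single identity simultaneously shows that $\lambda(v)$ is well defined on $\fA$ (the value on $\xi$ depends only on $\xi$ itself, not its representation) and that it extends to a partial isometry on $\fA$ with initial projection $\lambda(v^\dag v)$.

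Given the partial isometry property, the remaining steps reduce to elementary cocycle-type verifications. Multiplicativity $\lambda(vw)=\lambda(v)\lambda(w)$ follows, using $q(vw)=q(v)q(w)$, from the relation
\[\sigma(vw,s)=\sigma(v,q(w)s)\,\sigma(w,s),\]
which one checks directly by telescoping $j(q(vw)s)^\dag vw\,j(s)$ through the factor $j(q(w)s)j(q(w)s)^\dag$. The adjoint identity $\lambda(v)^*=\lambda(v^\dag)$ is obtained by comparing $\innerprod{k_s\otimes x,\lambda(v)(k_t\otimes y)}_\fA$ with $\innerprod{\lambda(v^\dag)(k_s\otimes x),k_t\otimes y}_\fA$; both simplify, by the same manipulations as in Step~1, to the common expression $x^*\,j(s)^\dag v\,j(q(v^\dag)s\wedge t)\,y$. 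Finally, injectivity of $\lambda$ follows by taking $s=1_\S$ (so $j(1_\S)=1$ since $j$ is order preserving): the map $\lambda(v)$ determines both $q(v)$ (from the $\S$-label of $\lambda(v)(k_{1_\S}\otimes 1)$) and $\sigma(v,1_\S)=j(q(v))^\dag v$; since $vv^\dag$ and $j(q(v))j(q(v))^\dag$ are two idempotents of $\E(\G)$ with the same image $q(v)q(v)^\dag\in \E(\S)$ and $q|_{\E(\G)}$ is a bijection, they are equal, so $v=j(q(v))\,\sigma(v,1_\S)$ is determined by $\lambda(v)$.

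The main obstacle, as in~\cite[Theorem~4.16]{DonsigFullerPittsVNAlExInSe}, is carrying out the inner-product identity of the second paragraph. The new difficulty relative to that setting is that $\sigma(v,s)\in\P$ is a partial isometry in the (possibly non-commutative) algebra $\N$, not merely a scalar phase that can be pulled through the $\D$-valued kernel $K$; in particular, all rearrangements must respect the order $\sigma(v,s)^*\cdots\sigma(v,t)$ and must produce idempotents in $\E(\G)\subseteq \D\subseteq \Z(\N)$ at the precise locations where they can commute past the $x_i,y_j\in \N$. The order-preserving section of Proposition~\ref{P: order per sec}, and specifically the identity~\eqref{ops1}, is indispensable: without it the cross-term $j(as_i)K(as_i,at_j)j(at_j)^\dag$ does not simplify to a $\D$-valued element of the form required to match $\innerprod{\lambda(v^\dag v)\xi,\eta}_\fA$.
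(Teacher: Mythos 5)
Your core step takes a genuinely different route from the paper --- the paper never computes general cross terms, but instead disjointifies the labels $s_1,\dots,s_N$ via \cite[Lemma~4.15]{DonsigFullerPittsVNAlExInSe}, proves only the contraction estimate $\norm{\lambda(v)\xi}\le\norm{\xi}$, and recovers the partial isometry property at the very end from multiplicativity together with $\lambda(v)^*=\lambda(v^\dag)$ --- and your route could in principle work, but the identity on which you hang everything is false as stated. Writing $a=q(v)$, $\epsilon=q(v^\dag v)$ and $m_{ij}=s_i\wedge t_j$, the correct collapse of the cross term is
\[
\sigma(v,s_i)^*\,K(as_i,at_j)\,\sigma(v,t_j)
\;=\;K(\epsilon s_i,\,t_j)
\;=\;K(s_i,t_j)\,j\bigl(m_{ij}^\dag\,\epsilon\,m_{ij}\bigr),
\]
so the idempotent that appears is the conjugate of $v^\dag v$ \emph{through} $s_i\wedge t_j$, not $p_v=v^\dag v$ itself. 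A concrete counterexample: take the Cartan pair $(M_2(\bbC),\D_2)$ with $\D_2$ the diagonal, so $\S$ is the symmetric inverse monoid on $\{1,2\}$ and $j$ sends partial bijections to partial permutation matrices. With $v=E_{21}$ (so $\epsilon$ is the idempotent with domain $\{1\}$) and $s_i=t_j=\tau$ the transposition, one computes $\sigma(v,\tau)=E_{22}$ and $K(a\tau,a\tau)=E_{22}$, so the cross term is $E_{22}$; your formula gives $K(\tau,\tau)\,v^\dag v=I\cdot E_{11}=E_{11}$. The error is fatal to the step as written: since your claimed value $K(s_i,t_j)p_v$ has $p_v$ central in $\N$, carrying it forward yields $\innerprod{\lambda(v)\xi,\lambda(v)\eta}=\innerprod{\xi,\eta}\,v^\dag v$, i.e.\ that $\lambda(v)^*\lambda(v)$ is \emph{right multiplication} by $v^\dag v$, which is not $\lambda(v^\dag v)$ (test both on $k_\tau\otimes I$ in the example). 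Ironically, this is exactly the pitfall your last paragraph warns about: $\epsilon$ must be conjugated into position before it becomes a $\D$-valued factor that may sit between $x_i^*$ and $y_j$. With the corrected right-hand side the identity is true and does match the cross terms of $\innerprod{\lambda(v^\dag v)\xi,\eta}$, since $\sigma(v^\dag v,s_i)^*K(\epsilon s_i,t_j)=j(s_i^\dag\epsilon t_j\wedge 1)=K(\epsilon s_i,t_j)$ as well; so your architecture is repairable, but the repair is precisely the computation that is missing.

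Two smaller gaps. First, even granting the corrected identity, well-definedness is not quite ``simultaneous'': the identity holds between \emph{formal} sums, and to see that null vectors map to null vectors you must first apply it with $v$ replaced by $p=v^\dag v$ (giving $\norm{\lambda(p)\xi}^2=\innerprod{\lambda(p)\xi,\xi}\le\norm{\lambda(p)\xi}\,\norm{\xi}$, hence contractivity of $\lambda(p)$) and then use Cauchy--Schwarz to get $\norm{\lambda(v)\xi}^2=\innerprod{\lambda(p)\xi,\xi}\le\norm{\xi}^2$. Second, your injectivity argument reads off ``the $\S$-label'' of $\lambda(v)(k_1\otimes I)=k_{q(v)}\otimes\sigma(v,1)$, but elementary tensors have no well-defined labels: $k_{se}\otimes n=k_s\otimes j(e)n$ for every $e\in\E(\S)$ and $n\in\N$, because $k_{se}=k_sj(e)$ and the tensor product is balanced over $\D$. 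The paper gets around this in stages: it first shows $\lambda|_{\E(\G)}$ is one-to-one by evaluating the reproducing-kernel functions pointwise, then deduces $q(v_1)=q(v_2)$ from $\lambda(v_1j(e)v_1^\dag)=\lambda(v_2j(e)v_2^\dag)$ for all $e\in\E(\S)$ together with fundamentality of $\S$, and only then evaluates $\lambda(v_i)k_e$ at the point $q(v_1)$ to conclude $v_1=v_2$. An argument of that shape is needed in place of your final step.
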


\begin{proof} Fix $v\in\G$, and set $r:=q(v)$.  Given $s_1,\dots,
s_N\in\S$, apply 
\cite[Lemma~4.15]{DonsigFullerPittsVNAlExInSe} to obtain $A\subseteq \S$ satisfying:
\begin{enumerate}
\item $0\notin A$;
\item if $a,b\in A$ then $a\wedge b =0$;
\item if $a\in A$ then $a\wedge s_n \in \{0,a\}$ for $1\leq n \leq N$;
and there exists $1\leq n \leq N$ such that $a\wedge s_n=a$;
\item for each $1\leq n \leq N$, $s_n = \bigvee\{a\in A \colon a\leq
s_n\}.$
\end{enumerate}  Choose $c_1,\dots,
c_N\in\N$.

For $a\in A$ and $1\leq m\leq N$, put
\[A_m:=\{b\in A: b\leq s_m\}\dstext{and} c_a:=\sum\{c_n: a\leq
s_n\}.\] Since $A_m \subseteq A$, the elements of $A_m$ are pairwise
meet orthogonal.  Further, $\bigvee A_m=s_m.$

As in the proof of~\cite[Theorem~4.16]{DonsigFullerPittsVNAlExInSe},
\begin{equation}\label{Eq: Grepdef1} \sum_{n=1}^N k_{s_n} \otimes
c_n=\sum_{a\in A} k_{a}\otimes c_a.
\end{equation} Secondly, with routine modifications to the proof of
\cite[Equation~(4.4)]{DonsigFullerPittsVNAlExInSe}, we obtain
\begin{equation}\label{Eq: Grepdef2} \sum_{n=1}^N k_{rs_n}\otimes
\cocycle(v,s_n)c_n=\sum_{a\in A} k_{ra}\otimes\cocycle(v,a)c_a.
\end{equation}

Notice that if $a,b\in A$ are distinct, then $ra$ and $rb$ are
orthogonal, so for $x,y\in\N$,
\begin{align*} \innerprod{k_{ra} \otimes\cocycle(v,a)x, k_{rb}
\otimes\cocycle(v,b)y} &=x^*\cocycle(v,a)^*K(ra,rb)\cocycle(v,b)y\\
&=0\\ &=x^*K(a,b) y\\ &=\innerprod{k_a \otimes x,k_b \otimes y}.
\end{align*} Thus, as $\D\subseteq \Z(\N)$ and using ~\eqref{Eq:
Grepdef2}, then~\eqref{Eq: Grepdef1},
\begin{align*} \innerprod{\sum_{n=1}^N k_{rs_n}
\otimes\cocycle(v,s_n)c_n,\sum_{n=1}^N k_{rs_n}
\otimes\cocycle(v,s_n)c_n}&=\innerprod{\sum_{a\in A} k_{ra}
\otimes\cocycle(v,a)c_a, \sum_{a\in A} k_{ra}
\otimes\cocycle(v,a)c_a}\\ &=\sum_{a\in A}
|c_a|^2\cocycle(v,a)^*j(a^\dag r^\dag r a)\cocycle(v,a)\\ &=\sum_{a\in
A} |c_a|^2j(a^\dag r^\dag r a)\\ &\leq \sum_{a\in A} |c_a|^2j(a^\dag
a)\\ &= \innerprod{\sum_{a\in A} k_{a}\otimes c_a,\sum_{a\in A} k_{a}
\otimes c_a}\\ &=\innerprod{\sum_{n=1}^N k_{s_n} \otimes c_n,
\sum_{n=1}^N k_{s_n}\otimes c_n}.
\end{align*} Therefore,
\begin{equation*} \norm{\sum_{n=1}^N \lambda(v)(k_{s_n}\otimes
c_n)}\leq \norm{\sum_{n=1}^n k_{s_n}\otimes c_n}.
\end{equation*} It follows that we may extend $\lambda(v)$ linearly to
a contractive operator from the algebraic tensor product $\fA_0\odot_\iota\N$ into $\fA$.  Finally extend
$\lambda(v)$ by continuity to a contraction in $\B(\fA)$, the bounded
operators on $\fA$.

We next show that $\lambda(v)$ is adjointable.  As in the proof of the
corresponding equality found in the proof of
\cite[Theorem~4.16]{DonsigFullerPittsVNAlExInSe}, for $s,t\in \S$,
\begin{equation*} \sigma(v,s)^\dag K(rs,t)=\sigma(v^\dag,t)K(s,r^\dag
t).
\end{equation*} Therefore for any $s,t\in\S$ and $x,y\in \N$,
\begin{align*} \innerprod{\lambda(v)(k_s \otimes x), k_t \otimes y} &=
\innerprod{k_{rs} \otimes\cocycle(v,s) x, k_t \otimes y}
=x^*\cocycle(v,s)^* K(rs,t) y\\ &= x^* \cocycle(v^\dag, t) K(s,r^\dag
t)y = x^* K(s,r^\dag t)\cocycle(v^\dag, t)y\\ &=\<k_s \otimes x,
\lambda(v^\dag) (k_t \otimes y)\>.
\end{align*} This equality implies that $\lambda(v)$ is adjointable
and $\lambda(v)^*=\lambda(v^\dag).$

We now show that $\lambda$ is a homomorphism.  Suppose that
$v_1,v_2\in\G$, $x\in\N$ and $s\in\S$.  Then
\begin{align*} \lambda(v_1)(\lambda(v_2)(k_s \otimes x)) &=
\lambda(v_1)(k_{q(v_2)s} \otimes \cocycle(v_2,s)x)\\ &=
k_{q(v_1v_2)s}\otimes \cocycle(v_1,q(v_2)s)\,\, \cocycle(v_2,s)x.
\end{align*} But
\begin{align*} \cocycle(v_1,q(v_2)s)\,\, \cocycle(v_2,s)& =
j(q(v_1)q(v_2)s))^\dag v_1j(q(v_2)s) \,\, j(q(v_2)s)^\dag v_2j(s)\\ &=
j(q(v_1v_2)s))^\dag v_1j(q(v_2)s) \,\, j(s^\dag q(v_2)^\dag) v_2j(s)\\
&= j(q(v_1v_2)s))^\dag v_1 (v_2j(ss^\dag) v_2^\dag) v_2j(s)\\ &=
j(q(v_1v_2)s))^\dag v_1 v_2 v_2^\dag v_2 j(ss^\dag) j(s)\\
&=j(q(v_1v_2)s))^\dag v_1v_2j(s)= \cocycle(v_1v_2,s).
\end{align*} Hence $\lambda(v_1)\lambda(v_2)(k_s \otimes
x)=\lambda(v_1v_2)(k_s \otimes x)$.  As $\spn\{k_s\otimes x:
s\in\S\text{ and } x\in\N\}$ is dense in $\fA$, we conclude that
$\lambda(v_1v_2)=\lambda(v_1)\lambda(v_2).$ It follows that for every
$e\in\E(\G)$, $\lambda(e)$ is a projection.  Furthermore, for $v\in
\G$, $\lambda(v)$ is a partial isometry because
$\lambda(v)^*=\lambda(v^\dag)$.

It remains to show that $\lambda$ is one-to-one.  We first show that
$\lambda|_{\E(\G)}$ is one-to-one.  Suppose $e, f\in\E(\S)$ and
$\lambda(j(e))=\lambda(j(f))$.  Then for every $s\in\S$,
$\sigma(j(e),s)=j(s^\dag e s)\in\D$ and $\sigma(j(f),s) = j(s^\dag f
s)\in \D$.  As the tensor product is balanced,
\begin{align*} k_{es}j(s^\dag es)\otimes I&= k_{es}
\otimes\cocycle(j(e),s)\\ &=\lambda(j(e))(k_s\otimes
I)=\lambda(j(f))(k_s\otimes I)\\ &=k_{fs}
\otimes\cocycle(j(f),s)=k_{fs}j(s^\dag fs)\otimes I,
\end{align*} whence $k_{es} j(s^\dag e s)=k_{fs} j(s^\dag fs)$.
Taking $s=1$ gives $k_ej(e)=k_fj(f)$.  Evaluating these elements of
$\fA_\D$ at $t=1$ gives $j(e)=j(f)$, so $\lambda|_{\E(\G)}$ is
one-to-one.

Now suppose $v_1,v_2\in\G$ and $\lambda(v_1)=\lambda(v_2)$.  Then
\[\lambda(v_1^\dag v_1)=\lambda(v_1^\dag v_2)=\lambda(v_1^\dag
v_2)^*=\lambda (v_2^\dag v_1)=\lambda(v_2^\dag v_2).\] Likewise,
\[\lambda(v_1v_1^\dag) =\lambda(v_1v_2^\dag)
=\lambda(v_2v_1^\dag)=\lambda(v_2v_2^\dag).\] Hence $v_1^\dag
v_1=v_2^\dag v_2$ and $v_1v_1^\dag =v_2v_2^\dag$.  For any
$e\in\E(\S)$, we have
\begin{align*} \lambda(v_1 j(e) v_1^\dag)&= \lambda(v_1 v_1^\dag v_1
j(e) v_1^\dag v_1 v_1^\dag) = \lambda(v_1 v_2^\dag v_2 j(e) v_2^\dag
v_2 v_1^\dag) =\lambda(v_2v_2^\dag v_2 j(e) v_2^\dag v_2 v_2^\dag)\\
&=\lambda(v_2j(e) v_2^\dag).
\end{align*} Hence $v_1j(e)v_1^\dag =v_2j(e)v_2^\dag$.  Since this
holds for every $e\in\E(\S)$ and $\S$ is fundamental, we conclude that
$q(v_1)=q(v_2)$.

Let $e:=q(v_1^\dag v_1)$ and $s:=q(v_1)$.  Since the functions
$\lambda(v_1)k_e$ and $\lambda(v_2)k_e$ agree, we obtain, $k_{s}
j(s)^\dag v_1=k_{s} j(s)^\dag v_2 $.  Evaluating these functions at
$t=s$ gives, $j(s)^\dag v_1=j(s)^\dag v_2$.  Multiplying each side of
this equality on the left by $j(s)$, we obtain $v_1=v_2$.
\end{proof}

Let $\pi:\N\rightarrow \bh$ be a normal representation.  Recall there
is a $*$-representation $\pi_*: \L(\fA)\rightarrow \B(\fA\otimes_\pi
\H)$ given by
\begin{equation}\label{inducedpi} \pi_*(T)(u\otimes \xi)=(Tu)\otimes
\xi.
\end{equation} This representation is strictly continuous on the unit
ball of $\L(\fA)$ and is faithful whenever $\pi$ is faithful~\cite[p.\
42]{LanceHiC*Mo}.  As
in~\cite[Corollary~4.17]{DonsigFullerPittsVNAlExInSe} we have the
following corollary.

\begin{corollary}\label{C: HilbertSpRep} Let $\pi:\N\rightarrow \bh$
be a $*$-representation of $\N$ on the Hilbert space $\H$.  Then
$\lambda_\pi:= \pi_*\circ \lambda$ is a representation of $\G$ by
partial isometries on $\fA\otimes_\pi \H$.  If $\pi$ is faithful, then
$\lambda_\pi$ is one-to-one.
\end{corollary}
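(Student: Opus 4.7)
The plan is to deduce this immediately by composing the two pieces already in hand: the representation $\lambda : \G \to \L(\fA)$ produced by Theorem~\ref{T: Grepdef} and the induced $*$-representation $\pi_* : \L(\fA) \to \B(\fA \otimes_\pi \H)$ described in~\eqref{inducedpi}. Since no new structure on $\G$ is introduced, the verification reduces to checking that these two known properties (representation by partial isometries; injectivity) are preserved under composition with a $*$-homomorphism.

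First I would observe that $\pi_*$ is a $*$-homomorphism from $\L(\fA)$ into $\B(\fA \otimes_\pi \H)$, as stated in the paragraph preceding the corollary. Consequently, $\lambda_\pi = \pi_* \circ \lambda$ is a semigroup homomorphism from $\G$ to $\B(\fA \otimes_\pi \H)$ with $\lambda_\pi(v^\dag) = \pi_*(\lambda(v)^*) = \pi_*(\lambda(v))^* = \lambda_\pi(v)^*$. Next, since $\lambda(v)$ is a partial isometry in $\L(\fA)$ by Theorem~\ref{T: Grepdef}, the identity $\lambda(v) = \lambda(v)\lambda(v)^*\lambda(v)$ is transported by the $*$-homomorphism $\pi_*$ to the corresponding identity for $\lambda_\pi(v)$, so $\lambda_\pi(v)$ is a partial isometry on $\fA \otimes_\pi \H$. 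This gives the first assertion.

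For the injectivity claim, suppose $\pi$ is faithful. By the cited fact from~\cite[p.~42]{LanceHiC*Mo}, $\pi_*$ is then faithful, i.e.\ injective. Combined with the injectivity of $\lambda$ established in Theorem~\ref{T: Grepdef}, the composition $\lambda_\pi = \pi_* \circ \lambda$ is injective. There is no genuine obstacle here; the only point worth flagging is that the two ingredients we need, namely that $\pi_*$ is a $*$-homomorphism and that it is faithful whenever $\pi$ is, are invoked directly from the quoted statement about the construction in~\eqref{inducedpi}, rather than reproved.
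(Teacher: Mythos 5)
Your proposal is correct and is essentially the argument the paper intends: the paper simply cites its predecessor (\cite[Corollary~4.17]{DonsigFullerPittsVNAlExInSe}) for this corollary, the content being exactly the composition of the injective partial-isometry representation $\lambda$ from Theorem~\ref{T: Grepdef} with the $*$-homomorphism $\pi_*$, which is faithful when $\pi$ is. Your explicit verification that $*$-homomorphisms preserve the partial-isometry identity and that injectivity passes through the composition fills in precisely what the citation leaves implicit.
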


\begin{remark}{Remark} Our construction of $\fA\otimes_\pi \H$ depends
upon the order structure of $\S$.  We show presently that the range of
$\lambda_\pi$ will generate a \ct.  When one starts with a \ct\ and
applies this construction to the extension associated to the pair, the
Hilbert space $\fA\otimes_\pi \H$ can be recognized as arising from
the representation associated to a faithful normal weight $\phi$ on
$\M$ such that $\phi\circ E=\phi$.  We will give the formal statement
in Proposition~\ref{H_phi=tenpr} below.
\end{remark}

We close this section with some results which will be needed when
constructing a \ct\ from an extension in Section~\ref{sec: ct from
  ext}.  They will also be used in  Section~\ref{sec: spectral thm} when we study
the $\N$-bimodule structure for a \ct.  Observe that the
construction of the right Hilbert $\D$-module $\fA_\D$ above depends
only upon $\S$ because $j|_{\E(\S)}$ is the inverse of $q|_{\E(\P)}$.
The $\tau_1$-topology described in the following definition has been considered by several authors, see
Section~3.5 of the survey article~\cite{ManuilovTroitskyHiC*W*MoThMo}.

\begin{definition} Let $\fM$ be  a right Hilbert module  over the von
Neumann algebra $\N$.
\begin{enumerate}
\item The \textit{$\tau_1$-topology on $\fM$} is the topology
generated by the seminorms, $\xi\mapsto
\phi(\innerprod{\xi,\xi})^{1/2}$, where $\phi$ is a normal state on
$\N$.
\item
The \textit{$\tau_1$-strict topology on $\L(\fM)$} is the topology  generated
by the seminorms, $T\mapsto \phi(\innerprod{T\xi,T\xi})^{1/2}$ where
$\xi\in \fM$ and $\phi$ is a normal state on $\N$. 
\end{enumerate}
Notice that a net
$T_\alpha\rightarrow T$ in the $\tau_1$-strict topology if and only if
for every $\xi\in \fM$,
\[\innerprod{(T_\alpha-T)\xi,(T_\alpha-T)\xi}\rightarrow 0\] in the
$\sigma$-strong topology of $\N$.
\end{definition}

The following result can be proved directly in the same way as
Proposition~5.2 of~\cite{DonsigFullerPittsVNAlExInSe}, but it is
simpler to apply~\cite[Theorem~4.16 and
Proposition~5.2]{DonsigFullerPittsVNAlExInSe}.

\begin{proposition}\label{Qsdef} For $s\in\S$, the map $\S\ni t\mapsto
k_{s\wedge t}$ extends to a projection $Q_s\in \L(\fA_\D)$ whose range
is $\overline{\spn}\{k_t: t\leq s\}$.  Furthermore, the following
statements hold.
\begin{enumerate}
\item Let $s, t\in \S$.  If $s\wedge t=0$, then $Q_sQ_t=Q_tQ_s=0$; if
$s^\dag t=st^\dag=0$, then $Q_t+Q_s=Q_{s\vee t}$.
\item If $\B\subseteq \S$ is a maximal meet disjoint subset and
$\Lambda$ is the set of all finite subsets of $\B$ directed by
inclusion, then the net $(\sum_{s\in F} Q_s)_{F\in\Lambda}$ converges
$\tau_1$-strictly to the identity operator in $\L(\fA_\D)$.
\end{enumerate}
\end{proposition}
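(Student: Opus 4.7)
The key observation is that the right Hilbert $\D$-module $\fA_\D$, the inner product~\eqref{fancyip}, and the reproducing kernel elements $\{k_s\}_{s\in\S}$ depend only on $\S$ and the restriction $j|_{\E(\S)}$ of the order-preserving section to idempotents; the $\N$-Clifford monoid $\P$ and the ``fattening'' to $\fA=\fA_\D\otimes_\iota\N$ play no role in constructing $\fA_\D$. Consequently our $\fA_\D$ coincides with the Hilbert module built in Section~4.2 of~\cite{DonsigFullerPittsVNAlExInSe}. The plan is therefore to apply~\cite[Proposition~5.2]{DonsigFullerPittsVNAlExInSe} essentially verbatim: it produces $Q_s\in\L(\fA_\D)$ satisfying $Q_sk_t=k_{s\wedge t}$ on generators, identifies its range as $\overline{\spn}\{k_t:t\leq s\}$, and establishes both (a) and (b).

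For a self-contained derivation, I would first define $Q_s$ on the dense subspace $\spn\{k_t:t\in\S\}$ by $Q_sk_t:=k_{s\wedge t}$, and verify well-definedness and contractivity by a direct $\D$-valued inner-product computation using the explicit formula for $K$ and the Boolean structure of $\S$. The range identification is then immediate from $Q_sk_t=k_t$ for $t\leq s$. Part (a) reduces to routine checks on generators using the elementary identities $(s\wedge t)\wedge r=s\wedge(t\wedge r)$ and, when $s^\dag t=st^\dag=0$, $(s\vee t)\wedge r=(s\wedge r)\vee(t\wedge r)$, together with the additivity $k_{a\vee b}=k_a+k_b$ whenever $a\wedge b=0$, which is inherited from $j|_{\E(\S)}$ being a complete lattice isomorphism onto $\E(\G)\subseteq\D$.

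The only substantive step is part (b). On generators, $\sum_{s\in F}Q_sk_t=k_{\bigvee_{s\in F}(s\wedge t)}\to k_t$, since maximality of $\B$ in the complete Boolean inverse monoid $\S$ forces $\bigvee\B=1$ and hence $\bigvee_{s\in\B}(s\wedge t)=t$. The main obstacle is to promote this pointwise convergence on generators to $\tau_1$-strict convergence of the partial sums to the identity on $\fA_\D$, i.e.\ to show that for every normal state $\phi$ on $\D$ and every $\xi\in\fA_\D$ one has $\phi\bigl(\innerprod{(I-\sum_{s\in F}Q_s)\xi,\ (I-\sum_{s\in F}Q_s)\xi}\bigr)\to 0$. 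This is a standard $\eps/3$ argument combining density of $\spn\{k_t:t\in\S\}$ in $\fA_\D$ with the uniform boundedness of the partial sums (each is a projection, by part~(a)), and it is carried out in precisely this form in the proof of~\cite[Proposition~5.2]{DonsigFullerPittsVNAlExInSe}.
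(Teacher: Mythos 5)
Your construction of $Q_s$ on generators, the range identification, part (a), and the $\eps/3$ upgrade from convergence on generators to $\tau_1$-strict convergence are all fine, and structurally close to the paper (which gets $Q_s$ faster as $\lambda_0(s)P_\D\lambda_0(s)^*$, with $\lambda_0$ the cocycle-free version of Theorem~\ref{T: Grepdef} and $P_\D=Q_1$ taken from \cite[Proposition~5.2]{DonsigFullerPittsVNAlExInSe}). The genuine gap is the one clause on which all of part (b) rests: ``maximality of $\B$ in the complete Boolean inverse monoid $\S$ forces $\bigvee\B=1$ and hence $\bigvee_{s\in\B}(s\wedge t)=t$.'' Neither half is justified, and the first is false in general. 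In a complete Boolean inverse monoid, joins are guaranteed only for \emph{compatible} families (all products $st^\dag$ and $s^\dag t$ idempotent); a meet-disjoint family need not be compatible, so $\bigvee\B$ need not exist at all. Concretely, let $\S$ be the symmetric inverse monoid on two points (the Cartan inverse monoid of $M_2(\bbC)$ with its diagonal MASA) and $\B=\{1,u\}$ with $u$ the transposition: then $1\wedge u=0$, $\B$ is a maximal meet-disjoint set, but $1$ and $u$ have no common upper bound, so $\bigvee\B$ does not exist. Moreover, even when $\bigvee\B$ happens to exist, passing from $\bigvee\B=1$ to $\bigvee_{s\in\B}(s\wedge t)=t$ would require distributing $\wedge$ over a join of pairwise incompatible elements, which is not an available axiom.

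The identity $\bigvee_{s\in\B}(s\wedge t)=t$ is true, but extracting it from maximality is exactly the substance of the paper's proof of (b), and it cannot be waved through. Fix $r\in\S$. The elements $r\wedge s$ ($s\in\B$) are pairwise meet-disjoint and all lie below $r$, hence pairwise orthogonal (for $x,y\leq r$ with source idempotents $e,f$ one has $x\wedge y=r(ef)$, so $x\wedge y=0$ forces $ef=0$ and then $x^\dag y=xy^\dag=0$); therefore $b:=\bigvee_{s\in\B}(r\wedge s)^\dag(r\wedge s)$ exists in $\E(\S)$ and $b\leq r^\dag r$. If $a:=r^\dag r\wedge\neg b$ were nonzero, then $ra\neq 0$, and for every $s\in\B$ we would have $ra\wedge s\leq (r\wedge s)\wedge(ra)=r\bigl((r\wedge s)^\dag(r\wedge s)\,a\bigr)=0$, using $ra\wedge s\leq r\wedge s$, $ra\wedge s\leq ra$, and $a\leq\neg b\leq\neg\bigl((r\wedge s)^\dag(r\wedge s)\bigr)$. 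Thus $ra$ would be a nonzero element meet-disjoint from every member of $\B$, contradicting maximality. Hence $a=0$ and $b=r^\dag r$, which is precisely what yields $\innerprod{Q_Fk_r-k_r,\,Q_Fk_r-k_r}=j\bigl(r^\dag r\wedge\neg(t_F^\dag t_F)\bigr)\rightarrow 0$ $\sigma$-strongly, where $t_F:=\bigvee_{s\in F}(r\wedge s)$. Without this argument, part (b) is unproved. A secondary caution: \cite[Proposition~5.2]{DonsigFullerPittsVNAlExInSe} supplies only the single projection $P_\D=Q_1$, not the family $\{Q_s\}$ nor statement (b) --- the paper itself proves (b) from scratch --- so your plan to quote that proposition ``essentially verbatim'' would not close this gap either.
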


\begin{proof} 
By replacing $\sigma(v,t)$ with the identity operator throughout the proof
of~\cite[Thoerem~4.16]{DonsigFullerPittsVNAlExInSe} (or
Theorem~\ref{T: Grepdef} above) one finds that for every $s\in\S$, there
exists a partial isometry $\lambda_0(s)\in\L(\fA_\D)$ such that
\[\lambda_0(s)k_t=k_{st}.\]   Calculations show that for every $t\in
\S$,
\[\lambda_0(s)P_\D\lambda_0(s)^*k_t=k_{s\wedge t},\] where
$P_\D k_t:=k_{t\wedge 1}$ is the projection from
\cite[Proposition~5.2]{DonsigFullerPittsVNAlExInSe}.  This establishes
the existence of the projection $Q_s$, with the desired range.

The proof of (a) is routine and left to the reader.  Let $\B$ be a
maximal meet disjoint subset of $\S$.  Since the net $(\sum_{s\in F}
Q_s)_{F\in\Lambda}$ is an increasing net of projections, it suffices
to show that for each $t\in \S$, the net
\begin{equation*}
\left(\sum_{s\in F} Q_s
k_t\right)_{F\in\Lambda} \tau_1\text{-converges to } k_t.
\end{equation*}

For $F\in\Lambda$, let $Q_F:=\sum_{s\in F} Q_s$.  For $r\in \S$ and
$s_1, s_2\in F$, $r\wedge s_1$ and $r\wedge s_2$ are disjoint elements
of $A_r:=\{t\in\S: t\leq r\}$, so $(r\wedge s_1)\vee (r\wedge s_2)$ is
defined.  Let $t_F:=\bigvee_{s\in F}(r\wedge s)$.  Then $t_F\leq r$
and $Q_Fk_r=k_{t_F}$.  Denote by $\neg$ the $\mathrm{NOT}$ operation
in the Boolean algebra $\E(\S)$.  We have that
\[\innerprod{Q_Fk_r-k_r, Q_Fk_r -k_r} =\innerprod{k_{t_F}-k_r,
k_{t_F}-k_r}=j(r^\dag r\wedge \neg(t_F^\dag t_F)).\] Let
$b:=\bigvee_{F\in\Lambda} (t_F^\dag t_F)$.  Clearly $b\leq r^\dag r$.
Set $a:=r^\dag r\wedge (\neg b)$.  Then for $s\in \B$,
\[a\wedge s^\dag s = r^\dag r\wedge s^\dag s\wedge (\neg b)\leq r^\dag
r\wedge s^\dag s\wedge \neg((r\wedge s)^\dag (r\wedge s))=0.\] Now
$ra\meet s=ra(r^\dag r\meet s^\dag s)=0$, so that $ra$ is meet
disjoint from every element of $\B$.  By maximality of $\B$, we obtain
$ra=0$, whence $a=0$.  Thus $b=r^\dag r$, from which it follows that
$j(t_F^\dag t_F)$ converges $\sigma$-strongly in $\D$ to $j(r^\dag
r)$.  Therefore, $Q_F$ converges $\tau_1$-strictly to
$I_{\L(\fA_\D)}$.
\end{proof}

We now have the following corollary to Proposition~\ref{Qsdef}.

\begin{corollary} \label{Qsdefcor} The net $Q_F\otimes I_\N$ converges
$\tau_1$-strictly to $I_{\L(\fA)}$.
\end{corollary}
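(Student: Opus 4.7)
The plan is to reduce $\tau_1$-strict convergence of $Q_F\otimes I_\N$ to $I_{\L(\fA)}$ to the $\tau_1$-strict convergence of $Q_F$ to $I_{\L(\fA_\D)}$ already established in Proposition~\ref{Qsdef}(b). Each $Q_F\otimes I_\N$ is a projection in $\L(\fA)$, so $P_F:=I_{\L(\fA)}-Q_F\otimes I_\N$ is a decreasing net of projections. For any projection $P\in\L(\fA)$, vector $\xi\in\fA$, and normal state $\phi$ on $\N$, the inequality $\innerprod{P\xi,P\xi}=\innerprod{\xi,P\xi}\leq\innerprod{\xi,\xi}$ in $\N$ gives $\|P\xi\|_\phi\leq\|\xi\|_\phi$, where $\|\xi\|_\phi:=\phi(\innerprod{\xi,\xi})^{1/2}$. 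Since the algebraic span of elementary tensors $k_t\otimes x$ is norm-dense in $\fA$, a standard $\eps/2$ argument reduces the desired convergence to showing that $\|P_F(k_t\otimes x)\|_\phi\to 0$ for each $t\in\S$, $x\in\N$, and each normal state $\phi$ on $\N$.

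Next I would carry out the computation on elementary tensors. Following the proof of Proposition~\ref{Qsdef}, for $t\in\S$ and $F\in\Lambda$ let $t_F:=\bigvee_{s\in F}(t\wedge s)$, so that $Q_Fk_t=k_{t_F}$ and hence $(Q_F\otimes I_\N)(k_t\otimes x)=k_{t_F}\otimes x$. Applying~\eqref{iprem} together with the inner product computation already carried out in the proof of Proposition~\ref{Qsdef} yields
\begin{equation*}
\innerprod{P_F(k_t\otimes x),P_F(k_t\otimes x)}=x^*\innerprod{k_t-k_{t_F},k_t-k_{t_F}}_{\fA_\D}\,x=x^*\,j(t^\dag t\wedge \neg(t_F^\dag t_F))\,x.
\end{equation*}

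The final step is to verify that this quantity tends to $0$ $\sigma$-strongly in $\N$. Set $d_F:=j(t^\dag t\wedge \neg(t_F^\dag t_F))=j(t^\dag t)-j(t_F^\dag t_F)$. Proposition~\ref{Qsdef}(b) shows that $j(t_F^\dag t_F)$ converges $\sigma$-strongly in $\D$ to $j(t^\dag t)$, so $d_F\to 0$ $\sigma$-strongly in $\D\subseteq\N$. Since multiplication by $x^*$ on the left and by $x$ on the right is $\sigma$-strongly continuous on bounded sets, $x^*d_Fx\to 0$ $\sigma$-strongly in $\N$, whence $\phi(x^*d_Fx)\to 0$ for every normal state $\phi$ on $\N$. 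Combined with the density reduction above, this completes the argument. I do not anticipate any serious obstacle; the only point requiring care is the density reduction, which rests on the contractivity of projections with respect to the seminorms $\|\cdot\|_\phi$ and on boundedness of the net $(P_F)$.
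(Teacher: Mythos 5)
Your proof is correct and follows essentially the same route as the paper's: compute $\innerprod{(Q_F\otimes I_\N)(k_t\otimes x)-(k_t\otimes x),\,(Q_F\otimes I_\N)(k_t\otimes x)-(k_t\otimes x)}$ on elementary tensors, factor it through the $\fA_\D$-valued inner product as $x^*\innerprod{Q_Fk_t-k_t,Q_Fk_t-k_t}_{\fA_\D}x$, and invoke the $\sigma$-strong convergence $j(t_F^\dag t_F)\to j(t^\dag t)$ established in Proposition~\ref{Qsdef}. The only difference is cosmetic: you spell out the density-plus-contractivity reduction to elementary tensors that the paper leaves implicit, and you keep $x^*d_Fx$ rather than using $\D\subseteq\Z(\N)$ to commute the inner product past $x$; both are fine.
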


\begin{proof} For $n\in \N$ and $s\in \S$, we have
\begin{align*} \langle(Q_F\otimes I_\N)(k_{s}\otimes n)&-(k_s\otimes
n), (Q_F\otimes I_\N)(k_{s}\otimes n)-(k_s\otimes n)\rangle\\
&=\innerprod{(Q_Fk_s-k_s)\otimes n, (Q_Fk_s-k_s)\otimes n}\\ &=
n^*\innerprod{Q_Fk_s -k_s,Q_Fk_s-k_s}_{\fA_\D} n\\ &=n^*n
\innerprod{Q_Fk_s -k_s,Q_Fk_s-k_s}_{\fA_\D}.
\end{align*} As the last expression tends to zero in the
$\sigma$-strong operator topology on $\N$, the result follows.
\end{proof}

Now let $\pi:\N\rightarrow\bh$ be a faithful normal representation of
$\N$ and for $s\in\S$ define projections on $\fA\otimes_\pi\H$ by
\begin{equation}\label{Pspidef}
P_{s,\pi}:=(Q_s\otimes I_\N)\otimes I_\H.
\end{equation}

\begin{proposition}\label{PsSOT} Let $\B\subseteq \S$ be a maximal
meet disjoint subset.  Then $\sum_{s\in\B} P_{s,\pi}$ converges
strongly to $I\in\B(\fA\otimes_\pi \H)$.
\end{proposition}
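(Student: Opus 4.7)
The plan is to push Corollary~\ref{Qsdefcor} through the induced representation $\pi_*$. The projections $P_{s,\pi}$ for $s\in\B$ are pairwise orthogonal: since $\B$ is meet disjoint, Proposition~\ref{Qsdef}(a) gives $Q_sQ_t=0$ for distinct $s,t\in\B$, whence $(Q_s\otimes I_\N)(Q_t\otimes I_\N)=0$ and therefore $P_{s,\pi}P_{t,\pi}=0$. Thus, for each finite $F\subseteq \B$, $\sum_{s\in F}P_{s,\pi}$ is a projection, and the net of such partial sums (indexed by the finite subsets of $\B$ ordered by inclusion) is an increasing, uniformly bounded net of projections in $\B(\fA\otimes_\pi\H)$.

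First I would fix an elementary tensor $u\otimes \xi\in \fA\otimes_\pi \H$ and compute
\[
\Bigl\| u\otimes\xi-\sum_{s\in F}P_{s,\pi}(u\otimes\xi)\Bigr\|^2
= \bigl\langle \pi\bigl(\langle (I-Q_F\otimes I_\N)u, (I-Q_F\otimes I_\N)u\rangle\bigr)\xi,\xi\bigr\rangle,
\]
where $Q_F:=\sum_{s\in F}Q_s$. By Corollary~\ref{Qsdefcor}, the positive element $y_F:=\langle (I-Q_F\otimes I_\N)u,(I-Q_F\otimes I_\N)u\rangle\in\N$ converges $\sigma$-strongly (hence $\sigma$-weakly) to $0$ as $F$ grows. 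Since $\pi$ is normal, $\omega_\xi:x\mapsto \langle \pi(x)\xi,\xi\rangle$ is a normal positive linear functional on $\N$, so $\omega_\xi(y_F)\to 0$. This gives strong convergence on elementary tensors.

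Finally, I would extend from elementary tensors to all of $\fA\otimes_\pi\H$ by a standard density-plus-uniform-boundedness argument: the elementary tensors span a dense subspace, and the partial sums $\sum_{s\in F}P_{s,\pi}$ are projections and so have norm at most $1$. The only subtle point, which I would flag as the main (small) obstacle, is the passage from the $\tau_1$-strict convergence in $\L(\fA)$ to norm convergence on $\fA\otimes_\pi\H$; this rests entirely on the normality of $\pi$ so that vector states of $\pi$ yield normal positive functionals on $\N$ that annihilate $\sigma$-weak limits of the positive elements $y_F$. No further ingredients beyond Proposition~\ref{Qsdef} and Corollary~\ref{Qsdefcor} are needed.
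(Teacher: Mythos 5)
Your proof is correct and takes essentially the same route as the paper: both rest on Corollary~\ref{Qsdefcor} together with the normality of $\pi$, which makes the vector functionals $x\mapsto\langle\pi(x)\xi,\xi\rangle$ normal on $\N$ (equivalently, makes $T\mapsto\|(T\otimes I_\H)(u\otimes\xi)\|$ a $\tau_1$-strict seminorm), followed by a density-plus-uniform-boundedness extension. The paper phrases this by observing that a bounded $\tau_1$-strictly convergent net in $\L(\fA)$ has a strongly convergent amplification $T_\alpha\otimes I_\H$; your elementary-tensor computation is exactly that observation unwound.
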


\begin{proof} For any $h\in\H$ and $\xi\in \fA$, the map
\[\L(\fA)\ni T\mapsto \innerprod{((T\otimes I_\H)(\xi\otimes
h),(T\otimes I_\H)(\xi\otimes h)}=\innerprod{h,
\pi(\innerprod{T\xi,T\xi}_{\fA})h}_\H^{1/2}\] is a $\tau_1$-strict
seminorm on $\L(\fA)$.  Therefore, if $(T_\alpha)$ is a bounded net in
$\L(\fA)$ which converges $\tau_1$-strictly to $T\in\L(\fA)$,
$(T_\alpha\otimes I_\H)$ converges strongly in $\bh$ to $T\otimes I$.
An application of Corollary~\ref{Qsdefcor} completes the proof.
\end{proof}

\begin{remark}{Remark} Proposition~\ref{PsSOT} is similar to Lemma~\ref{fndense}.
The initial data for Proposition~\ref{PsSOT} is the extension $\P\hookrightarrow
  \G\overset{q}\twoheadrightarrow \S$ and the representation $\pi : \N \to \B(\H)$;
  its conclusion may be interpreted as the statement that $\bigvee_{s\in\S}
  P_{s,\pi}=I_{\fA\otimes_\pi\H}$.  On the other hand,
  Lemma~\ref{fndense} deals with the \ct\
  $(\M,\N,\D)$ and a semi-cyclic representation induced by a suitable weight;
  its conclusion may be interpreted as the statement that $\bigvee_{v\in
    \G\N(\M,\D)}P_v=I_{\H_\phi}$, where $P_v$ is defined in the proof of
  Lemma~\ref{fndense}.  There is a further relation between these results: 
when the extension $\P\hookrightarrow
\G\overset{q}\twoheadrightarrow\S$ is  the extension associated to the
\ct\ $(\M,\N,\D)$,
Proposition~\ref{H_phi=tenpr} below implies that
  for any $v\in\G\N(\M,\D)$, the projections $P_v$ and
  $P_{q(v),\pi_\phi}$ are unitarily equivalent.

\end{remark}

\section{The Cartan Triple Associated to an Extension}\label{sec: ct from ext}

Throughout this section we will consider the extension,
\begin{equation*}
\P\hookrightarrow \G\overset{q}\twoheadrightarrow \S,
\end{equation*}
where $\S$ is a Cartan inverse monoid, and $\P$ is an $\N$-Clifford inverse monoid.
Assume throughout that a fixed order-preserving section $j\colon \S \rightarrow \G$ is given.
Our goal, achieved in Theorem~\ref{exttovn}, is to show how the representation of $\G$ constructed 
in Corollary~\ref{C: HilbertSpRep} gives rise to a \ct\ with associated extension 
$\P\hookrightarrow \G \overset{q}\twoheadrightarrow \S$.
In Theorem~\ref{T: indeppsi} we further show that the extension associated to the \ct\ returns the original extension.

We denote by $\fA$ the right Hilbert $\N$-module as defined in
Equation~\eqref{rsdef}.
Let $\pi$ be a faithful, normal representation of $\N$, and let
\begin{equation*}
\lambda_\pi \colon \G \rightarrow \B( \fA \otimes_\pi \H)
\end{equation*}
be the representation of $\G$ by partial isometries, as constructed in Theorem~\ref{T: Grepdef} and Corollary~\ref{C: HilbertSpRep}.

\begin{definition}\label{ctripledef}
Let
\begin{equation*}
\M\sge := (\lambda_\pi(\G))'',\quad \N\sge:=(\lambda_\pi(\P))'', \text{ and } \D\sge := (\lambda_\pi(\E(\G))''.
\end{equation*}
\end{definition}

We will show that $(\M\sge, \N\sge, \D\sge)$ is a \ct.
The definitions of $\M\sge$, $\N\sge$ and $\D\sge$ depend upon the choice of $\pi$ and, because $\lambda:\G\rightarrow \L(\fA)$ depends on the choice of $j$, $\M\sge$, $\N\sge$  and $\D\sge$ also depend on $j$.
However, we shall see in Theorem~\ref{T: indeppsi} that the \textit{isomorphism class} of $(\M\sge,\N\sge,\D\sge)$ depends only on the extension $\P\hookrightarrow \G\overset{q}\twoheadrightarrow \S$ and not upon $\pi$ or $j$.

The first step is to show that there is a faithful normal conditional expectation from $\M\sge$ onto $\N\sge$.
This will be used to show that $\N\sge = \D\sge^c$ in Proposition~\ref{Dqc=Nq}.
As in \cite{DonsigFullerPittsVNAlExInSe}, the expectation on $\M\sge$ will be induced by the map $s \mapsto s\wedge 1$ on $\S$.

Define $\Delta:\G\rightarrow\P$  by 
\begin{equation*}
\Delta(v):=v j(q(v)\wedge 1),
\end{equation*}
for all $v\in \G$.
First note that
\begin{equation*}
q(\Delta(v))=q(v)(q(v)\wedge 1)= q(v)\wedge 1 \in \E(\S).
\end{equation*}
Thus $\Delta(v)\in\P$ for all $v\in \G$.
Further, if $v\in \P$ then $q(v)\in \E(\S)$, thus
\begin{equation*}
\Delta(v) = v j(q(v)\wedge 1)=v j(q(v))=v.
\end{equation*}

We will show that, given $v\in\G$, the formula,
\[E\sge(\lambda_\pi(v)):=\lambda_\pi(\Delta(v))\]
extends to a faithful conditional expectation $E\sge:\M\sge\rightarrow \N\sge$.  

\begin{remark}{Notation} Here is some notation.
\begin{enumerate}
\item Let $P_\D\in\L(\fA_\D)$ be the projection defined
in Proposition~3.6, so that $P_\D
k_s=k_{s\wedge 1}$.
That is, $P_\D = Q_1$ as defined in
Proposition~\ref{Qsdef}.  Since $\fA= \fA_\D\otimes_\iota\N$, the
tensor product of $P_\D$ with the identity of $\N$ gives a projection
$P\in\L(\fA)$ so that, for $s\in \S$ and $x\in\N$,
\begin{equation}\label{Pdef} P(k_s\otimes x)=k_{s\wedge 1}\otimes x.
\end{equation}

\item For $x\in \N$,  and $y=\sum_{i=1}^n k_{s_i}\otimes n_i\in
  \fA_\D\odot \N$, $\norm{\sum_{i=1}^n k_{s_i}\otimes xn_i}_{\fA}\leq
  \norm{x}\norm{y}_\fA$.   It follows that the map $k_s\otimes
  n\mapsto k_s\otimes xn$ extends to a bounded linear map $\pil(x)$ on
  $\fA$.  A computation shows that $\pil(x)$ is adjointable, so 
  there exists a faithful $*$-representation $\pil:\N\rightarrow
  \L(\fA)$.   Tensoring with the identity map,  we obtain a faithful normal
representation $\pil_*=\pil\otimes I$ of $\N$ on $\B(\fA\otimes_\pi
\H)$.  To be explicit, for $x\in \N$, $\pil_*(x)$ is defined on
elementary tensors $(k_s\otimes n\otimes \xi) \in\fA\otimes_\pi \H$ by
\begin{equation}\label{fatN} \pil_*(x)(k_s\otimes n\otimes \xi)
=k_s\otimes xn\otimes \xi=k_s\otimes I_\N\otimes \pi(xn)\xi.
\end{equation}
\end{enumerate}
\end{remark}

\begin{lemma}\label{Nqcom} 
For $s\in\S$, let $Q_s$ be the projection
on $\L(\fA_\D)$ defined in Proposition~\ref{Qsdef} and let
$P_{s,\pi}:=Q_s\otimes I_\N\otimes I_\H\in \B(\fA\otimes_\pi\H)$ be
the projection 
defined in Equation~\eqref{Pspidef}.
The following statements hold.
\begin{enumerate}
\item For $v\in\P$, $s\in\S$, $n\in \N$ and
  $\xi\in \H$, 
\[\lambda_\pi(v)(k_s\otimes n\otimes \xi)=k_s\otimes j(s)^*vj(s)
n\otimes \xi.\]
\item $P_{s,\pi}\in \N\sge'$ and for every
$v\in\P$, $\lambda_\pi(v)P_{s,\pi}=\pil_*(j(s)^*vj(s))P_{s,\pi}$.
\end{enumerate}
\end{lemma}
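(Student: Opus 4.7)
The plan is to prove (a) first and deduce (b) from it. For (a), since $v\in\P$ one has $q(v) = e_v \in \E(\S)$, so by definition $\lambda_\pi(v)(k_s\otimes n\otimes\xi) = k_{e_vs}\otimes \cocycle(v,s)n\otimes\xi$, and the first task is to simplify $\cocycle(v,s)$. Applying~\eqref{ops1} with $s_1=e_vs$ and $s_2=s$ (so that $s_1^\dag s_2\wedge 1 = s^\dag e_v s$, already an idempotent) yields $j(e_vs)^\dag j(s)j(s^\dag e_v s) = j(s^\dag e_v s)$; combining with $j(e_vs)\le j(s)$ forces $j(e_vs) = j(s)j(s^\dag e_v s)$ and hence
\[\cocycle(v,s) = j(e_vs)^\dag v j(s) = j(s^\dag e_v s)\,j(s)^* v j(s).\]
Set $Y:=j(s)^*vj(s)$. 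The fact that $j(s)$ normalizes $\D$ readily implies that $j(s)$ normalizes $\N=\D^c$ as well, so $Y\in\N$; a short calculation using $v^*v=vv^*=j(e_v)$ and $j(s)j(s)^* = j(ss^\dag)$ gives $Y^*Y = YY^* = j(s^\dag e_v s)$. In particular $j(s^\dag e_v s)Y = Y$, so $\cocycle(v,s) = Y$, and (a) reduces to showing $k_{e_v s}\otimes Yn = k_s\otimes Yn$ in $\fA$.

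Because $j(s^\dag e_v s)\in\D$ is the source projection of $Y$, the balancing identity $k\otimes dm = kd\otimes m$ rewrites both tensors as $k_{e_v s}\cdot j(s^\dag e_v s)\otimes Yn$ and $k_s\cdot j(s^\dag e_v s)\otimes Yn$ respectively, so it suffices to establish the pointwise identity
\[(s^\dag e_v t\wedge 1)\,(s^\dag e_v s) = (s^\dag t\wedge 1)\,(s^\dag e_v s)\qquad\text{in }\E(\S)\]
for every $t\in\S$. Writing $g:=s^\dag e_v s$ and $a:=s^\dag t$, I would first verify $s^\dag e_v t = ga$ (using $s^\dag(1-ss^\dag)=0$), reducing the claim to the general identity $(ga\wedge 1)g = (a\wedge 1)g$ in an inverse monoid. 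This in turn is proved by two short order arguments: $ga\le a$ gives $(ga\wedge 1)\le(a\wedge 1)$ and so $(ga\wedge 1)g\le(a\wedge 1)g$; conversely, $(a\wedge 1)g = g(a\wedge 1)\le ga$ (left-multiply $a\wedge 1\le a$ by the idempotent $g$), producing an idempotent below $ga$ and hence $\le ga\wedge 1$, which after right-multiplying by $g$ yields the reverse inclusion.

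For (b), the bridge step is: whenever $t\le s$, $k_t\otimes j(t)^*vj(t)n = k_t\otimes j(s)^*vj(s)n$ in $\fA$. To see this, apply~\eqref{ops1} with $s_1=t$, $s_2=s$ and combine with $j(t)\le j(s)$ to get $j(t) = j(s)j(t^\dag t)$; since $j(t^\dag t)\in\D\subseteq\Z(\N)$ this gives $j(t)^*vj(t) = j(t^\dag t)\,j(s)^*vj(s)$, and balancing together with $k_t\cdot j(t^\dag t) = k_t$ (pointwise: $t^\dag r\wedge 1$ is the source of $t\wedge r\le t$, hence $\le t^\dag t$) yields the claimed equality. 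Applying (a) and this bridge identity to $P_{s,\pi}(k_t\otimes n\otimes\xi) = k_{s\wedge t}\otimes n\otimes\xi$ gives
\begin{align*}
\lambda_\pi(v)P_{s,\pi}(k_t\otimes n\otimes\xi)
&= k_{s\wedge t}\otimes j(s\wedge t)^*vj(s\wedge t)n\otimes\xi\\
&= k_{s\wedge t}\otimes j(s)^*vj(s)n\otimes\xi\\
&= \pil_*(j(s)^*vj(s))P_{s,\pi}(k_t\otimes n\otimes\xi),
\end{align*}
establishing the displayed formula. The commutation $P_{s,\pi}\lambda_\pi(v) = \lambda_\pi(v)P_{s,\pi}$ (hence $P_{s,\pi}\in\N\sge'$) follows in the same way, applying the bridge identity with $s\wedge t\le t$ to $P_{s,\pi}\lambda_\pi(v)(k_t\otimes n\otimes\xi) = k_{s\wedge t}\otimes j(t)^*vj(t)n\otimes\xi$ so that both products agree on the dense set of elementary tensors. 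The main technical obstacle throughout is the Boolean identity in $\E(\S)$ underlying (a); once it is in hand, the rest is tensor-product bookkeeping.
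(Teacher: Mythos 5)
Your proposal is correct, and its computational engine is the same as the paper's: both simplify the cocycle $\sigma(v,s)$ to $j(s)^*vj(s)$ and then move a projection of $\D$ across the balanced tensor product. The differences are organizational. For (a), where the paper simply invokes \cite[Corollary~4.9]{DonsigFullerPittsVNAlExInSe} to write $k_{q(v)s}=k_{s(s^\dag q(v)s)}=k_sj(s^\dag q(v)s)$ and then absorbs that projection into $j(s)^*vj(s)$, you re-derive the underlying lattice fact from scratch, reducing it to the identity $(ga\wedge 1)g=(a\wedge 1)g$ for an idempotent $g$; your order-theoretic argument for this is sound (in fact the stronger identity $ga\wedge 1=(a\wedge 1)g$ holds, which is essentially what Corollary~4.9 encodes), so your (a) is self-contained but longer. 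For (b), the paper gets $P_{s,\pi}\in\N\sge'$ almost for free from (a): since $\ran P_{s,\pi}=\overline{\spn}\{k_t\otimes n\otimes\xi: t\leq s\}$, part (a) shows this range is invariant under $\lambda_\pi(\P)$, hence under $\N\sge=\lambda_\pi(\P)''$, and an invariant subspace of a $*$-algebra is reducing; the displayed formula then follows from the single rewriting $P_{s,\pi}(k_t\otimes n\otimes\xi)=k_{s\wedge t}\otimes n\otimes\xi=k_s\otimes j(s^\dag t\wedge 1)n\otimes\xi$ together with (a). You instead verify the commutation relation directly on elementary tensors via your bridge identity (for $t\leq s$, $k_t\otimes j(t)^*vj(t)n=k_t\otimes j(s)^*vj(s)n$), applied once with $s\wedge t\leq s$ and once with $s\wedge t\leq t$; this is equally valid, makes the commutation explicit rather than deduced from a reducing-subspace argument, and keeps the computation anchored at $k_{s\wedge t}$ where the paper moves everything to $k_s$. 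Both routes buy the same lemma; the paper's is shorter because it outsources the kernel identities and uses the invariance shortcut, while yours is more elementary and verifiable line by line.
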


\begin{proof} Since $v\in\P$, $q(v)=q(vv^*)$, so $\sigma(v,s)=j(s^\dag
q(v)^\dag)vj(s)=j(s)^*vj(s)$.  Therefore,
\begin{align*} \lambda_\pi(v)(k_s\otimes n\otimes
\xi)&=k_{q(v)s}\otimes j(s)^*vj(s)n\otimes \xi=k_{ss^\dag
q(v)s}\otimes j(s)^*vj(s)n\otimes \xi\\  &=k_sj(s^\dag q(v)s)\otimes
j(s)^*vj(s)n\otimes \xi =k_s\otimes j(s^\dag q(v)s)j(s)^*vj(s)n\otimes
\xi\\ &= k_s\otimes j(s)^*vj(s)n\otimes \xi,
\end{align*}  where the third equality follows from
\cite[Corollary~4.9]{DonsigFullerPittsVNAlExInSe}.  This gives part
(a) and shows $\ran(P_{s,\pi})$ is invariant for
every element of $\lambda_\pi(\P)$.  Thus, $\ran(P_{s,\pi})$ is invariant for $\N\sge$.  As
$\N\sge$ is a $*$-algebra, $P_{s,\pi}$ reduces $\N\sge$, whence
$P_{s,\pi}\in \N\sge'$.  

Now suppose $t\in\S$, $n\in\N$ and $\xi\in\H$. Note that $j(s)j(s^\dag t\wedge
1)=j(s(s^\dag t\wedge 1))=j(s\wedge t)$.   Then, again using
\cite[Corollary~4.9]{DonsigFullerPittsVNAlExInSe}, 
\begin{equation}\label{Nqcom1} P_{s,\pi}(k_t\otimes n\otimes \xi)=k_{s\wedge t}\otimes
n\otimes\xi=k_s\otimes j(s^\dag t\wedge 1)n\otimes \xi.
\end{equation}
A computation using part (a) and~\eqref{Nqcom1} now gives the formula in part (b).
\end{proof}

With the obvious modifications to the proof of
\cite[Proposition~5.2]{DonsigFullerPittsVNAlExInSe}, we obtain the
following.
\begin{lemma}\label{PProp} With $P$ defined as in
Equation~\eqref{Pdef}, the following properties hold:
\begin{enumerate}
\item $\ran P=\overline{\spn}\{k_e \otimes x: e\in\E(\S)\text{ and }
x\in \N\}$; and
\item for $v\in \G$,
\[P\lambda(v)P=\lambda(\Delta(v))P.\]
\end{enumerate}
\end{lemma}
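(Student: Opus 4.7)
The plan is as follows. Part (a) is immediate from Proposition~\ref{Qsdef} applied with $s=1$: $Q_1 = P_\D$ has range $\overline{\spn}\{k_t : t\leq 1\} = \overline{\spn}\{k_e : e\in\E(\S)\}$, and since $P = Q_1 \otimes I_\N$ acts on $\fA = \fA_\D\otimes_\iota\N$, its range is $\overline{\spn}\{k_e\otimes x : e\in\E(\S),\,x\in\N\}$.

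For part (b), I would verify the identity on the dense set of elementary tensors $k_s\otimes x$. Let $e := s\wedge 1\in\E(\S)$, so $P(k_s\otimes x) = k_e\otimes x$. Using Theorem~\ref{T: Grepdef} and $q(\Delta(v)) = q(v)\wedge 1$,
\begin{equation*}
P\lambda(v)P(k_s\otimes x) = k_{(q(v)e)\wedge 1}\otimes \sigma(v,e)x,\qquad \lambda(\Delta(v))P(k_s\otimes x) = k_{(q(v)\wedge 1)e}\otimes \sigma(\Delta(v),e)x.
\end{equation*}
To match the $\S$-indices, suppose $f\in\E(\S)$ with $f\leq q(v)e$ and write $f = q(v)eh$ for some $h\in\E(\S)$; then $f = q(v)(eh)$ gives $f\leq q(v)$, and combined with $f\leq 1$ this yields $f\leq q(v)\wedge 1$. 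Taking $\dag$ gives $f = heq(v)^\dag$, and the identity $ehe = eh$ (commuting idempotents) yields $ef = eheq(v)^\dag = f$, so $f\leq e$. Hence $f\leq (q(v)\wedge 1)e$, so $(q(v)e)\wedge 1 = (q(v)\wedge 1)e$; call this common idempotent $g$.

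To match the $\N$-coefficients, I would use three facts. First, since $q(\Delta(v))e = g$ and $j$ is multiplicative on idempotents, $\sigma(\Delta(v),e) = j(g)vj(g)$. Second, $g\leq q(v)e$ implies $j(g)j(q(v)e)^\dag = j(g)$, so $j(g)\sigma(v,e) = j(g)vj(e)$. Third, $k_g\cdot j(g) = k_g$ in $\fA_\D$: evaluating, $(k_g j(g))(t) = j(gt\wedge 1)j(g)$, and the same argument as above shows every idempotent $\leq gt$ lies $\leq g$, so $gt\wedge 1\leq g$ and the product collapses to $j(gt\wedge 1) = k_g(t)$. Using the $\iota$-balance in $\fA_\D\otimes_\iota\N$,
\begin{equation*}
k_g\otimes\sigma(v,e)x = k_g\otimes j(g)vj(e)x,\qquad k_g\otimes\sigma(\Delta(v),e)x = k_g\otimes j(g)vj(g)x.
\end{equation*}

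The main obstacle is establishing $j(g)\,v\,j(e\setminus g) = 0$ in $\M$, which together with the orthogonal decomposition $j(e) = j(g) + j(e\setminus g)$ in $\E(\G)$ (valid since $g\leq e$) equates the two tensors above. I would prove this via the cocycle relation $v\,j(e\setminus g) = j(q(v)(e\setminus g))\,\sigma(v,e\setminus g)$: the range projection of the partial isometry $j(q(v)(e\setminus g))$ is $j(q(v)(e\setminus g)q(v)^\dag)$, and since $g\leq q(v)$ gives $gq(v) = g$, while $g(e\setminus g) = 0$, one obtains $g\cdot q(v)(e\setminus g)q(v)^\dag = g(e\setminus g)q(v)^\dag = 0$. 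Hence $j(g)j(q(v)(e\setminus g)) = 0$, so $j(g)\,v\,j(e\setminus g) = 0$. Linearity and density of $\spn\{k_s\otimes x\}$ in $\fA$ complete the proof.
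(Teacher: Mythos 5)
Your proposal is essentially correct, and it is more informative than the paper itself, which proves this lemma only by asserting that ``the obvious modifications'' to the proof of \cite[Proposition~5.2]{DonsigFullerPittsVNAlExInSe} suffice; a direct verification on elementary tensors of the kind you give is precisely what is left to the reader there. Part (a), the idempotent calculus showing $(q(v)e)\wedge 1=(q(v)\wedge 1)e$, the identities $j(g)\sigma(v,e)=j(g)vj(e)$ and $k_gj(g)=k_g$, and the orthogonality computation giving $j(g)\,v\,j(e\setminus g)=0$ are all sound, and each of these is a purely inverse-semigroup computation carried out inside $\G$.

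One step, however, is phrased in a way that does not make sense in this setting and needs repair. You speak of establishing $j(g)vj(e\setminus g)=0$ ``in $\M$'' and then implicitly distribute $v$ over the sum $j(e)=j(g)+j(e\setminus g)$. But in Section~\ref{sec: rep G} there is no ambient von Neumann algebra containing $\G$: the extension is abstract, and only $\P=\G\N(\N,\D)$ (hence $\E(\G)=\proj(\D)$) consists of operators, so an expression like $v\bigl(j(g)+j(e\setminus g)\bigr)$ is undefined --- $\G$ has no addition. The identity $j(g)vj(e\setminus g)=0$ itself is fine, since your proof of it uses only products in $\G$ and its zero element; what must be fixed is the additive assembly, and this should be done on elements of $\P$. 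Set $w:=j(g)vj(e)=j(g)\sigma(v,e)\in\P\subseteq\N$. Then $wj(e)=w$, and since $j(e)=j(g)+j(e\setminus g)$ holds in $\D\subseteq\N$, one may compute in $\N$ that $w=wj(e)=wj(g)+wj(e\setminus g)$; here the $\G$-products $wj(g)=j(g)vj(g)$ and $wj(e\setminus g)=j(g)vj(e\setminus g)=0$ coincide with the corresponding $\N$-products because $\P$ is a subsemigroup of $\G$ whose operation is operator multiplication in $\N$. This yields $j(g)vj(e)=j(g)vj(g)$ as elements of $\N$, which is exactly what is needed to equate the two tensors. With that correction your proof is complete.
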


Modifications to the proof
of~\cite[Proposition~5.3]{DonsigFullerPittsVNAlExInSe} yield the
following result.

\begin{lemma}\label{Vdef} The map $V:\H\rightarrow \fA\otimes_\pi \H$
given by $V\xi= (k_1\otimes I_\N)\otimes \xi$ is an isometry.
Moreover, the following properties hold:
\begin{enumerate}
\item for $s\in\S$, $x\in\N$ and $\xi\in\H$, $V^*(k_s\otimes x\otimes
\xi)=\pi(j(s\wedge 1)x) \xi$;
\item $VV^*=\pi_*(P)$, where $\pi_*: \L(\fA)\rightarrow \B(\fA\otimes_\pi
\H)$ is defined by $\pi_*(T)(u\otimes \xi)=(Tu)\otimes
\xi$;
\item for $v\in\G$, $V^*\lambda_\pi(v) V=\pi(\Delta(v))$.
\end{enumerate}
\end{lemma}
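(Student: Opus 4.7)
The proof splits into showing $V$ is an isometry and then parts (a)--(c), each building on the previous. The strategy parallels that of \cite[Proposition~5.3]{DonsigFullerPittsVNAlExInSe}, though the two-step tensor construction $\fA = \fA_\D\otimes_\iota \N$ and the cocycle-like function $\sigma$ from Definition~\ref{D: cocycle like} require extra bookkeeping between the $\N$-balancing of $\otimes_\pi$ and the $\D$-balancing of $\otimes_\iota$.

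To see $V^*V = I_\H$, I would compute $\langle V\xi,V\eta\rangle_{\fA\otimes_\pi \H}$ directly: via \eqref{iprem} one finds $\langle k_1\otimes I_\N, k_1\otimes I_\N\rangle_\fA = K(1,1) = j(1) = I$, so this collapses to $\langle \xi,\eta\rangle_\H$. For part (a), I would pair $V^*(k_s\otimes x\otimes \xi)$ against an arbitrary $\eta\in\H$, expand through \eqref{iprem} to obtain $\pi(x^* K(s,1))$, and use that $K(s,1) = j(s\wedge 1)\in \D\subseteq \Z(\N)$ is a self-adjoint central projection to rewrite $\pi(x^* j(s\wedge 1))^* = \pi(j(s\wedge 1) x)$. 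Part (b) follows by applying (a): $VV^*(k_s\otimes x\otimes \xi) = (k_1\otimes I_\N)\otimes \pi(j(s\wedge 1)x)\xi$, and pushing $j(s\wedge 1)x$ through the $\N$-balancing of $\otimes_\pi$ and then $j(s\wedge 1)$ through the $\D$-balancing of $\otimes_\iota$ gives $(k_1\cdot j(s\wedge 1))\otimes x\otimes \xi$. A pointwise check shows $k_1\cdot j(s\wedge 1) = k_{s\wedge 1}$ (both evaluate at $t\in \S$ to $j(t\wedge s\wedge 1)$, since $j|_{\E(\S)}$ is a lattice homomorphism), matching $\pi_*(P)(k_s\otimes x\otimes \xi) = k_{s\wedge 1}\otimes x\otimes \xi$.

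Part (c) is the technical heart of the lemma. From Definition~\ref{D: cocycle like}, $\sigma(v,1) = j(q(v))^\dag v$, which lies in $\P\subseteq \N$, so $\lambda_\pi(v) V\xi = (k_{q(v)}\otimes \sigma(v,1))\otimes \xi$. Applying (a) yields $V^*\lambda_\pi(v) V\xi = \pi\bigl(j(q(v)\wedge 1)\,j(q(v))^\dag v\bigr)\xi$, and the lemma reduces to the inverse-semigroup identity
\begin{equation*}
  j(q(v)\wedge 1)\,j(q(v))^\dag v = v\,j(q(v)\wedge 1) = \Delta(v)
\end{equation*}
in $\G$. Setting $s := q(v)$ and $e := s\wedge 1$ (and noting $s^\dag\wedge 1 = s\wedge 1 = e$), two applications of \eqref{ops1}, with $(s_1,s_2) = (s,1)$ and $(s_1,s_2) = (1,s)$, give $j(s)^\dag j(e) = j(e)$ and $j(s) j(e) = j(e)$; taking $\dag$ yields $j(e)j(s) = j(e) = j(e)j(s)^\dag$. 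Consequently the left-hand side collapses to $j(e) v$, and the task reduces to $j(e) v = v j(e)$.

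The main obstacle is that $v\notin\P$ in general, so the Clifford property does not apply to $v$ directly. My plan is to factor $v$ through $\P$: since $v^\dag v = j(s)^\dag j(s)$ (both are the source idempotent of $v$ in $\G$, corresponding to $s^\dag s\in \E(\S)$ under the bijection $q|_{\E(\G)}$), the element $p := vj(s)^\dag$ satisfies $q(p) = ss^\dag \in \E(\S)$, and so $p\in\P$ by Proposition~\ref{propextension}(b), with $v = p j(s)$. Because $\P$ is Clifford and $j(e)\in\E(\P)$, $j(e)$ commutes with $p$; combined with $j(s)j(e) = j(e)j(s)$, we obtain
\begin{equation*}
  j(e) v = j(e) p j(s) = p j(e) j(s) = p j(s) j(e) = v j(e) = \Delta(v),
\end{equation*}
completing (c).
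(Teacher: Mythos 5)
Your proposal is correct, and it takes essentially the approach the paper intends: the paper's "proof" is only the remark that one modifies \cite[Proposition~5.3]{DonsigFullerPittsVNAlExInSe}, and your direct verification on elementary tensors (isometry via $K(1,1)=j(1)=I$, part (a) via \eqref{iprem}, part (b) via the two balancings, part (c) via $\sigma(v,1)=j(q(v))^\dag v$ and \eqref{ops1}) is exactly that modification spelled out. In particular, your factorization $v=(vj(s)^\dag)j(s)$ with $vj(s)^\dag\in\P$, together with the centrality of $\E(\P)=\proj(\D)$ in $\N$, is precisely the step needed to replace the abelian-$\D$ commutation arguments of the Cartan-pair setting, and it is carried out correctly.
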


\begin{lemma}\label{v*Mqv} We have
\[V^*\M_q V=\pi(\N)=V^*\N\sge V.\]
\end{lemma}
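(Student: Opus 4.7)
The plan is to establish the chain
\[
\pi(\N)\subseteq V^*\N\sge V \subseteq V^*\M\sge V \subseteq \pi(\N),
\]
which forces all three sets to coincide and yields the two equalities asserted in the statement (recall $\M_q=\M\sge$ and $\N_q=\N\sge$ by definition of the macro \texttt{\textbackslash sge}). The middle inclusion is immediate from $\lambda_\pi(\P)\subseteq\lambda_\pi(\G)$, so only the two outer inclusions require work; the essential input for both is Lemma~\ref{Vdef}(c), which says $V^*\lambda_\pi(v)V=\pi(\Delta(v))$ for every $v\in\G$.

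For the rightmost inclusion $V^*\M\sge V\subseteq\pi(\N)$, I would first observe that $q(\Delta(v))=q(v)\wedge 1\in\E(\S)$ for each $v\in\G$, so $\Delta(v)\in\P\subseteq\N$ by Proposition~\ref{propextension}(b); hence $V^*\lambda_\pi(v)V=\pi(\Delta(v))\in\pi(\N)$. By linearity, $V^*TV\in\pi(\N)$ for every $T\in\spn\lambda_\pi(\G)$. Since $\lambda_\pi$ is a unital $*$-homomorphism, $\M\sge$ is the weak-$*$ closure of $\spn\lambda_\pi(\G)$, and since compression by the bounded operator $V$ is weak-$*$ continuous, taking weak-$*$ limits stays within $\pi(\N)$, which is itself weak-$*$ closed because $\pi$ is normal. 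Thus $V^*\M\sge V\subseteq\pi(\N)$.

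For the leftmost inclusion $\pi(\N)\subseteq V^*\N\sge V$, I would invoke the remark following Definition~\ref{def: N-clifford}: since $\U(\N)\subseteq\P$, every $n\in\N$ is a linear combination of at most four elements of $\P$. For any $u\in\P$ one has $q(u)\in\E(\S)$, so $q(u)\wedge 1=q(u)$, giving $j(q(u)\wedge 1)=j(q(u))=u^*u$ and therefore $\Delta(u)=u(u^*u)=u$. Lemma~\ref{Vdef}(c) then produces $V^*\lambda_\pi(u)V=\pi(u)$. Writing $n=\sum_{i=1}^4 c_iu_i$ with $u_i\in\P$ and setting $T:=\sum_i c_i\lambda_\pi(u_i)\in\spn\lambda_\pi(\P)\subseteq\N\sge$ yields $V^*TV=\pi(n)$. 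This finite-combination argument is the crux of the direction: it avoids any need for a closure step on the $\N\sge$ side.

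No real obstacle is expected. The argument rests entirely on Lemma~\ref{Vdef}(c) together with two basic observations: $\Delta(\G)\subseteq\P$, and $\N$ is \emph{algebraically} (not merely weak-$*$) spanned by $\P$. Assembling the three inclusions produces $\pi(\N)=V^*\N\sge V=V^*\M\sge V$, which is the asserted equality.
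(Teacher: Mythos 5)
Your proof is correct and follows essentially the same route as the paper's: both rest on Lemma~\ref{Vdef}(c) together with the facts that $\Delta(\G)\subseteq\P$ (giving $V^*\M\sge V\subseteq\pi(\N)$) and that every element of $\N$ is a linear combination of at most four elements of $\P$ with $\Delta|_{\P}=\mathrm{id}$ (giving $\pi(\N)\subseteq V^*\N\sge V$). The only difference is that you spell out the linearity/weak-$*$ density step that the paper leaves implicit, which is a welcome bit of extra care rather than a departure.
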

\begin{proof} Lemma~\ref{Vdef}(c) shows that for $x\in\M_q$,
$V^*xV\in\pi(\N)$, so $V^*\M_q V\subseteq \pi(\N)$.  On the other
hand, for $v\in \P$ we have
\begin{equation}\label{v*Mqv1} V^*\lambda_\pi(v)
V=\pi(\Delta(v))=\pi(v),
\end{equation} so $V^*\N\sge V\subseteq \pi(\N)$.  Since every element
of $\N$ is a linear combination of at most four elements of $\P$, we
obtain the result.
\end{proof}
 
Thus, the map $\M_q \ni x\mapsto \pi^{-1}(V^*xV)$ is a normal,
completely positive contraction of $\M_q$ onto $\N$.  We now show this
map gives an isomorphism of $\N\sge$ onto $\N$.
\begin{lemma}\label{isovn} The map $\alpha:\N\sge\rightarrow \N$ defined by
$\alpha(x)=\pi^{-1}(V^*xV)$ is a normal isomorphism of
$\N\sge$ onto $\N$.
\end{lemma}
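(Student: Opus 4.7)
The map $\alpha$ is normal, $*$-preserving and contractive by construction, and surjective onto $\N$ by Lemma~\ref{v*Mqv}; what remains is to verify multiplicativity and injectivity.

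For multiplicativity, observe that $VV^*=\pi_*(P)=P_{1,\pi}$ by Lemma~\ref{Vdef}(b), and Lemma~\ref{Nqcom}(b) applied with $s=1$ places $P_{1,\pi}$ in $\N\sge'$. Hence for $x,y\in\N\sge$,
\[V^*xV\,V^*yV \;=\; V^*x\,\pi_*(P)\,yV \;=\; V^*xy\,\pi_*(P)\,V \;=\; V^*xyV,\]
where the final step uses $V^*\pi_*(P)=V^*VV^*=V^*$. Applying $\pi^{-1}$ gives $\alpha(xy)=\alpha(x)\alpha(y)$.

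For injectivity I construct an explicit normal $*$-homomorphism $\tilde\lambda\colon\N\to\N\sge$ that extends $\lambda_\pi|_\P$ and serves as a two-sided inverse of $\alpha$. For each $s\in\S$ set $\beta_s(x):=j(s)^*xj(s)$. Since $j(ss^\dag)=j(s)j(s)^*\in\D\subseteq\Z(\N)$ commutes with $\N$, and $j(ss^\dag)j(s)=j(s)$, we obtain
\[\beta_s(x)\beta_s(y)=j(s)^*x\,j(ss^\dag)\,yj(s)=j(s)^*xy\,j(s)=\beta_s(xy),\]
so each $\beta_s$ is a normal $*$-endomorphism of $\N$. Fix a maximal meet-disjoint set $\B\subseteq\S$: by Proposition~\ref{Qsdef}(a) the projections $\{P_{s,\pi}\}_{s\in\B}$ are pairwise orthogonal, by Proposition~\ref{PsSOT} they sum strongly to $I$, and each commutes with $\pil_*(\N)$, since $P_{s,\pi}$ acts only on the $\fA_\D$-factor while $\pil_*$ acts only on the $\N$-factor. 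It follows that the strong limit
\[\tilde\lambda(x):=\sum_{s\in\B}\pil_*(\beta_s(x))\,P_{s,\pi}\]
defines a normal contractive $*$-homomorphism into $\B(\fA\otimes_\pi\H)$, with multiplicativity a direct consequence of the orthogonality of the $P_{s,\pi}$ and the multiplicativity of each $\beta_s$. Lemma~\ref{Nqcom}(b) gives $\lambda_\pi(v)P_{s,\pi}=\pil_*(\beta_s(v))P_{s,\pi}$ for $v\in\P$, so summing over $\B$ yields $\tilde\lambda(v)=\lambda_\pi(v)$. Therefore $\tilde\lambda(\N)$ is a von Neumann algebra containing $\lambda_\pi(\P)$, and so contains $\lambda_\pi(\P)''=\N\sge$; the reverse inclusion is immediate from the weak-$*$ continuity of $\tilde\lambda$ and the weak-$*$ density of $\spn\P$ in $\N$. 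Hence $\tilde\lambda(\N)=\N\sge$.

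Finally, by Lemma~\ref{Vdef}(c), $\alpha(\lambda_\pi(v))=\Delta(v)=v$ for $v\in\P$, so $\alpha\circ\tilde\lambda$ agrees with $\id_\N$ on $\P$; normality promotes this to $\alpha\circ\tilde\lambda=\id_\N$. Thus $\tilde\lambda$ is a normal $*$-isomorphism of $\N$ onto $\N\sge$ with two-sided inverse $\alpha$, completing the proof. The principal obstacle is the construction of $\tilde\lambda$: multiplicativity and surjectivity of $\alpha$ drop out readily from $VV^*\in\N\sge'$, but injectivity requires the finer structural fact that $\N\sge$ is diagonal with respect to the $\S$-grading $\{P_{s,\pi}\}$ of $\fA\otimes_\pi\H$, which in turn hinges on the idempotents of $\G$ lying in the center of $\N$ (so that the conjugation maps $\beta_s$ are genuinely multiplicative).
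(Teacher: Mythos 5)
Your multiplicativity and surjectivity arguments are correct, and in fact cleaner than the paper's: the paper verifies multiplicativity only on $\lambda_\pi(\P)$ via Lemma~\ref{Vdef}(c) and then extends by linearity and the Kaplansky density theorem, whereas your observation that $VV^*=\pi_*(P)=P_{1,\pi}\in\N\sge'$ (Lemmas~\ref{Vdef}(b) and~\ref{Nqcom}(b)) gives $V^*xV\,V^*yV=V^*xyV$ for all $x,y\in\N\sge$ in one line.

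The injectivity argument, however, has a genuine gap, and it occurs at the first step of your construction: the formula $\beta_s(x):=j(s)^*xj(s)$ does not define a map on $\N$. The element $j(s)$ is an abstract member of the inverse semigroup $\G$; at this point of the paper there is no algebra containing both $\N$ and $j(s)$ (producing one is the purpose of the whole section), so the product $j(s)^*xj(s)$ is meaningful only for $x\in\P$, where it can be computed inside $\G$ and lands in $\P\subseteq\N$ because $q\bigl(j(s)^\dag xj(s)\bigr)=s^\dag q(x)s\in\E(\S)$. Your verification that $\beta_s$ is multiplicative silently treats $j(s)$ as an operator that can be moved past arbitrary elements of $\N$. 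To obtain $\beta_s$ on $\N=\spn\P$ one must extend the map $\P\ni v\mapsto j(s)^\dag vj(s)$ linearly, and the well-definedness of that extension --- that $\sum_k c_kv_k=0$ in $\N$ forces $\sum_k c_k\,j(s)^\dag v_kj(s)=0$ --- is a substantive claim, not bookkeeping: a multiplicative, $*$-preserving bijection of an inverse semigroup of the form $\G\N(\N,\D)$ need not be implementable by any linear map (for example, $u\mapsto\bar u$ on $\G\N(M_2(\bbC),\bbC I)=\U(M_2(\bbC))\cup\{0\}$ is multiplicative and $*$-preserving but extends only conjugate-linearly; compare also the opposite-algebra discussion in the introduction and the lifting problem of Appendix~A, which show that inverse-semigroup data alone does not produce algebra maps). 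Since $\tilde\lambda$, its normality, the identity $\tilde\lambda|_\P=\lambda_\pi|_\P$, and the equality $\tilde\lambda(\N)=\N\sge$ all rest on each $\beta_s$ being a globally defined normal $*$-endomorphism of $\N$, the injectivity proof is unsupported at its foundation; indeed, the existence of such linear extensions of the maps $\beta_s$ is essentially equivalent to the injectivity of $\alpha$ that you are trying to establish, so the construction presupposes what is to be proved.

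The paper's route avoids introducing any map on $\N$: it shows $\alpha$ is isometric on $\spn\lambda_\pi(\P)$. Fixing $x=\sum_j c_j\lambda_\pi(v_j)$ and $y=\alpha(x)=\sum_j c_jv_j$, Lemma~\ref{Nqcom}(b) applied term by term gives $xP_{s,\pi}=\pil_*\bigl(\sum_j c_j\,j(s)^*v_jj(s)\bigr)P_{s,\pi}$, from which the paper deduces $\norm{xP_{s,\pi}}\leq\norm{y}$, and Proposition~\ref{PsSOT} then yields $\norm{x}=\sup_{s\in\B}\norm{xP_{s,\pi}}\leq\norm{y}\leq\norm{x}$; every statement concerns the fixed finite sum. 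Note that the norm estimate used there is exactly the finite-sum shadow of the assertion you assumed, so you have correctly isolated the crux of the lemma --- but your write-up presents it as a triviality ("each $\beta_s$ is a normal $*$-endomorphism of $\N$") rather than proving it, and nothing in your argument uses any property of $\beta_s$ beyond multiplicativity and $*$-preservation on $\P$, which the example above shows is insufficient.
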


\begin{proof} The definition of $\alpha$ shows it is normal.  Next we
  show that $\alpha$ is a homomorphism.  For $v_1, v_2\in\P$,
  Lemma~\ref{Vdef}(c) gives
\[V^*(\lambda_\pi(v_1))V
V^*\lambda_\pi(v_2)V=\pi(v_1)\pi(v_2)=\pi(v_1v_2)=V^*\lambda_\pi(v_1v_2)V.\]
Thus $\alpha$ is multiplicative on $\lambda_\pi(\P)$.   It follows
that $\alpha$ is multiplicative on
$\spn(\lambda_\pi(\P))$.  As multiplication is $\sigma$-strongly
continuous on bounded sets, the Kaplansky density theorem ensures
$\alpha$ multiplicative.  Lemma~\ref{v*Mqv} now shows $\alpha$ is a $*$-epimorphism.

It remains to show $\alpha$ is one-to-one.  To do this, we show
$\alpha$ is isometric on $\spn(\lambda_\pi(\P))$.  Suppose $n\in\bbN$,
$c_j\in\bbC$, $v_j\in\P$ and $x=\sum_{j=1}^nc_j\lambda_\pi(v_j)$.  Put
$y=\alpha(x)$ so that $y=\sum_{j=1}^n c_jv_j$ by Equation~\ref{v*Mqv1}.  By Lemma~\ref{Nqcom}, for each
$s\in\S$,
\[xP_{s,\pi}=\pil_*(j(s)^*yj(s))P_{s,\pi},\] and hence
$\norm{xP_{s,\pi}}\leq \norm{y}$.  Now suppose $\B$ is a maximal
meet-disjoint subset of $\S$.  Then for distinct $s, t\in \B$,
$P_{s,\pi}$ and $P_{t,\pi}$ are orthogonal projections.  By
Proposition~\ref{PsSOT},
\[\norm{x}=\sup_{s\in\B}\norm{xP_{s,\pi}}\leq
\norm{y}=\norm{\alpha(x)}\leq\norm{x}.\] So $\alpha$ is isometric on
$\spn(\lambda_\pi(\P))$.
\end{proof}
 
At last, we can define the conditional expectation $E\sge$.

\begin{proposition}\label{ceexist} The formula,
\begin{equation}\label{cedef} E\sge(x):=\alpha^{-1}(\pi^{-1}(V^*xV))
\end{equation} gives a faithful normal conditional expectation of
$\M\sge$ onto $\N\sge$.  Furthermore, for $v\in\G$,
\begin{equation}\label{ceexist1}
E\sge(\lambda_\pi(v))=v\lambda_\pi(\Delta(v)).
\end{equation}
\end{proposition}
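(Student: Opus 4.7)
The plan is to verify that $E\sge$ is a normal, faithful conditional expectation, breaking the task into well-definedness and normality, the conditional expectation identity, the $\N\sge$-bimodule property, and faithfulness; formula~\eqref{ceexist1} will then fall out.

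First I will show $E\sge$ maps $\M\sge$ into $\N\sge$ as a normal positive contraction. Lemma~\ref{v*Mqv} gives $V^*\M\sge V \subseteq \pi(\N)$, so $\pi^{-1}(V^*xV) \in \N$, and Lemma~\ref{isovn} places $\alpha^{-1}(\pi^{-1}(V^*xV))$ in $\N\sge$. Each constituent map---compression $x \mapsto V^*xV$ and the $*$-isomorphisms $\pi^{-1}$ and $\alpha^{-1}$---is normal, positive, and contractive, so $E\sge$ inherits these properties. The identity $E\sge|_{\N\sge} = \mathrm{id}$ is built into $\alpha$ in Lemma~\ref{isovn}: for $y \in \N\sge$, $V^*yV = \pi(\alpha(y))$, so $E\sge(y) = y$. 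For the bimodule property, I exploit the commutation $VV^* = \pi_*(P) = P_{1,\pi} \in \N\sge'$ supplied by Lemmas~\ref{Vdef}(b) and~\ref{Nqcom}(b); for $a \in \N\sge$ this gives $V^*aVV^* = V^*VV^*a = V^*a$, so $V^*a(I - VV^*) = 0$, and consequently $V^*(ay)V = (V^*aV)(V^*yV)$. Applying $\pi^{-1}\alpha^{-1}$ yields $E\sge(ay) = aE\sge(y)$; the right-module identity is analogous. Formula~\eqref{ceexist1} then follows: Lemma~\ref{Vdef}(c) gives $V^*\lambda_\pi(v)V = \pi(\Delta(v))$, and since $\Delta(v) \in \P$, equation~\eqref{v*Mqv1} identifies this with $V^*\lambda_\pi(\Delta(v))V$, so $\alpha^{-1}(\Delta(v)) = \lambda_\pi(\Delta(v))$.

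The main obstacle is faithfulness. If $E\sge(x^*x) = 0$ for $x \in \M\sge$, then $V^*x^*xV = 0$ forces $xV = 0$, so $x$ annihilates $V\H = \ran P_{1,\pi}$. Applying the bimodule property with $v \in \P \subseteq \N\sge$ upgrades this to $x\lambda_\pi(v)V = 0$ for every $v \in \P$; a direct computation gives $\lambda_\pi(v)V\xi = k_{vv^*} \otimes v \otimes \xi$, so $x$ kills every idempotent-slice vector of the form $k_e \otimes I_\N \otimes \pi(e)\xi$ with $e \in \E(\G)$. The remaining task---propagating the annihilation to the non-idempotent slices $\ran P_{s,\pi}$ for $s \in \S \setminus \E(\S)$---is the technical heart of the argument. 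I plan to combine the formula $\lambda_\pi(j(s))V\xi = k_s \otimes I_\N \otimes \pi(j(s^\dag s))\xi$ with the intra-slice description of $\lambda_\pi(\P)$ given by Lemma~\ref{Nqcom}(b) to conclude $x\,\ran P_{s,\pi} = 0$ for every $s \in \S$. Once that is established, Proposition~\ref{PsSOT}---which says $\sum_s P_{s,\pi}$ converges strongly to the identity along any maximal meet-disjoint subset---delivers $x = 0$. Orchestrating this bootstrapping, using $\lambda_\pi(j(s))$ to move between slices and $\lambda_\pi(\P)$ to act within each slice, is where the real work lies.
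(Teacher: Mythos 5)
Your first two paragraphs are sound and essentially expand what the paper compresses into one sentence: Lemmas~\ref{v*Mqv} and~\ref{isovn} give that $E\sge$ is a well-defined normal positive map onto $\N\sge$ fixing $\N\sge$ pointwise; your use of $VV^*=\pi_*(P)=P_{1,\pi}\in\N\sge'$ to obtain the $\N\sge$-bimodule property is correct; and your derivation of \eqref{ceexist1} from Lemma~\ref{Vdef}(c) together with $\alpha^{-1}|_\P=\lambda_\pi|_\P$ gives $E\sge(\lambda_\pi(v))=\lambda_\pi(\Delta(v))$, which is the intended statement (the prefactor $v$ in \eqref{ceexist1} as printed is a typo).

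The faithfulness argument, however, has a genuine gap, and it sits exactly at the step you defer. From $E\sge(x^*x)=0$ you correctly get $xV=0$, i.e.\ $x$ annihilates $\ran P_{1,\pi}$. To conclude that $x$ annihilates $\ran P_{s,\pi}$ you would need $x\lambda_\pi(j(s))V=0$, and the only way to pass from $xV=0$ to that is to move $x$ across $\lambda_\pi(j(s))$; a general $x\in\M\sge$ does not commute with $\lambda_\pi(j(s))$. The tools you cite cannot repair this: Lemma~\ref{Nqcom}(b) says the slice projections $P_{s,\pi}$ commute with $\N\sge$ (not with $\M\sge$) and describes how $\lambda_\pi(\P)$ acts \emph{inside} a fixed slice; neither transports the annihilation property of $x$ from one slice to another. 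The paper circumvents this in two steps. First it proves $E\sge|_\C$ is faithful, where $\C=\Z(\M\sge)$: for central $x$ the needed commutation is free, so $x(k_s\otimes n\otimes\xi)=x\lambda_\pi(j(s))V\pi(n)\xi=\lambda_\pi(j(s))\,xV\pi(n)\xi=0$, and density of such vectors gives $x=0$. Then, for general $x$, it sets $\fJ:=\{x\in\M\sge:E\sge(x^*x)=0\}$, a weak-$*$ closed left ideal, proves $\fJ$ is two-sided via the covariance identity $E\sge(\lambda_\pi(v)^*x^*x\lambda_\pi(v))=\lambda_\pi(v)^*E\sge(x^*x)\lambda_\pi(v)$ for $v\in\G$ (computed as in \cite[Lemma~5.8]{DonsigFullerPittsVNAlExInSe}), writes $\fJ=Q\M\sge$ for a central projection $Q$ by \cite[Proposition~II.3.12]{TakesakiThOpAlI}, and concludes $Q=0$ from the central case. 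Your plan needs one of these devices --- the reduction to the center plus the ideal argument, or else an explicitly constructed family of operators in the commutant of $\M\sge$ carrying $\ran P_{1,\pi}$ onto the other slices --- before the ``bootstrapping'' you describe can actually be carried out.
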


\begin{proof} Lemmas~\ref{v*Mqv} and~\ref{isovn} imply $E\sge$ is a normal
conditional expectation of $\M\sge$ onto $\N\sge$.  It remains only to
establish that $E\sge$ is faithful.  The proof that $E\sge$ is faithful is
modeled on the proof of~\cite[Proposition~5.9]{DonsigFullerPittsVNAlExInSe}.

Let $\C$ denote the center of $\M\sge$.  We claim that $E\sge|_\C$ is
faithful.  Let $x\in\C$ and suppose $E\sge(x^*x)=0$.  The definition of
$E\sge$ from Equation~\ref{cedef} shows that $V^*x^*xV=0$ and hence
$xV=0$.  Notice that $\sigma(j(s),1)=j(s^\dag s)$ so that for
$n\in\N$, $\lambda(j(s))(k_1\otimes n) =k_s j(s^\dag s)\otimes
n=k_s\otimes n$
(see~\cite[Corollary~4.9]{DonsigFullerPittsVNAlExInSe}).  Hence for
$s\in\S$, $n\in \N$ and $\xi\in \H$,
\begin{equation*} x(k_s\otimes n\otimes \xi)=
x\lambda_\pi(j(s))(k_1\otimes n\otimes \xi) =\lambda_\pi(j(s))
x(k_1\otimes n\otimes \xi) =\lambda(j(s))xV\pi(n)\xi=0.
\end{equation*} Since the span of such vectors is a dense subspace of
$\fA\otimes_\pi \H$, we conclude that $x=0$.

Let $\fJ:=\{x\in\M\sge: E\sge(x^*x)=0\}$.  Then $\fJ$ is a left ideal of
$\M\sge$.  Compute as in the second part
of~\cite[Lemma~5.8]{DonsigFullerPittsVNAlExInSe} to find that for
$x\in\fJ$ and $v\in \G$,
\[E\sge(\lambda_\pi(v)^*x^*x\lambda_\pi(v))=\lambda_\pi(v)^*E\sge(x^*x)\lambda_\pi(v)=0.\]
Thus, $x\lambda_\pi(v)\in\fJ$.  It now follows that $\fJ$ is a
two-sided ideal of $\M\sge$ as well.  Since $\fJ$ is weak-$*$-closed,
by \cite[Proposition~II.3.12]{TakesakiThOpAlI}, there is a projection
$Q\in\C$ such that $\fJ=Q\M\sge.$ As $Q\in\fJ$ and $E\sge|_\C$ is
faithful, we obtain $Q=0$.  Thus $\fJ=(0)$, that is, $E\sge$ is
faithful.  The equality~\eqref{ceexist1} follows from
Lemma~\ref{Vdef}.
\end{proof}

\begin{proposition}\label{Dqc=Nq} The algebra $\N\sge$ is the relative
commutant of $\D\sge$ in $\M\sge$.  That is, $\N\sge = \D\sge^c$.
\end{proposition}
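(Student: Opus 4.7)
The inclusion $\N\sge\subseteq \D\sge^c$ is immediate from the Clifford property of $\P$: every element of $\P$ commutes with every element of $\E(\P)=\E(\G)$, so $\lambda_\pi(\P)$ and $\lambda_\pi(\E(\G))$ commute pointwise, and passing to double commutants shows $\N\sge$ and $\D\sge$ commute. In particular, since $\D\sge\subseteq\N\sge$, we get as a corollary $\D\sge\subseteq \Z(\N\sge)$, a fact used below.

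For the reverse inclusion, let $x\in\D\sge^c$ and put $y:=x-E\sge(x)$. Since $E\sge(x)\in\N\sge\subseteq\D\sge^c$, the element $y$ lies in $\D\sge^c\cap\ker E\sge$; it thus suffices to show $y=0$. The key claim is that $E\sge(\lambda_\pi(v)^*y)=0$ for every $v\in\G$. To prove it, apply Frol\'{i}k's theorem (as in the proof of Lemma~\ref{Emap}) to decompose $v^\dag v=e_0\vee e_1\vee e_2\vee e_3$ with pairwise meet-orthogonal idempotents $e_i\in\E(\G)$, where $ve_0=\Delta(v)\in\P$ and, for $i=1,2,3$, $(ve_i)^2=0$ together with $e_i\wedge ve_iv^\dag=0$. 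Writing $\lambda_\pi(v)^*=\sum_{i=0}^{3}\lambda_\pi(e_iv^\dag)$, the $i=0$ term contributes $E\sge(\lambda_\pi(e_0v^\dag)y)=\lambda_\pi(e_0v^\dag)E\sge(y)=0$ by the $\N\sge$-bimodule property of $E\sge$. For $i\geq 1$, set $u:=e_iv^\dag$; then $u^2=0$, and the initial and final projections $p:=\lambda_\pi(uu^\dag)=\lambda_\pi(e_i)$ and $q:=\lambda_\pi(u^\dag u)=\lambda_\pi(ve_iv^\dag)$ are orthogonal members of $\D\sge$ (by the Frol\'{i}k orthogonality). Since $y\in\D\sge^c$ commutes with $p$ and $q$, and $\lambda_\pi(u)=p\lambda_\pi(u)q$, one has $\lambda_\pi(u)y=p\lambda_\pi(u)yq$, so applying $E\sge$ and using $p,q\in\N\sge$ yields $E\sge(\lambda_\pi(u)y)=pE\sge(\lambda_\pi(u)y)q$. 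The corollary $\D\sge\subseteq\Z(\N\sge)$ now kicks in: $p$ commutes with $E\sge(\lambda_\pi(u)y)\in\N\sge$, and hence $pE\sge(\lambda_\pi(u)y)q=E\sge(\lambda_\pi(u)y)\,pq=0$.

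To finish, $\spn\lambda_\pi(\G)$ is a weak-$*$ dense $*$-subalgebra of $\M\sge$, and $z\mapsto E\sge(zy)$ is weak-$*$ continuous by normality of $E\sge$. The vanishing of $E\sge(\lambda_\pi(v)^*y)$ for every $v\in\G$ therefore upgrades to $E\sge(zy)=0$ for every $z\in\M\sge$; specializing to $z=y^*$ and invoking faithfulness of $E\sge$ (Proposition~\ref{ceexist}) forces $y=0$, so $x=E\sge(x)\in\N\sge$. The main technical hurdle is orchestrating the Frol\'{i}k decomposition with the commutation arguments for the three nilpotent summands; the essential ingredient making those arguments succeed is the corollary $\D\sge\subseteq\Z(\N\sge)$ drawn from the easy direction.
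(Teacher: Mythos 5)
Your proof is correct and follows essentially the same route as the paper's: the easy inclusion via the Clifford property and double commutants, and the reverse inclusion via a Frol\'{i}k decomposition of each element of $\G$ into its $\Delta$-part plus three nilpotent pieces, whose contributions under $E\sge$ are killed using the bimodule property, the commutation of $\D\sge^c$ with $\D\sge$, and the centrality of $\D\sge$ in $\N\sge$, after which weak-$*$ density, normality, and faithfulness of $E\sge$ finish the argument. The only differences are organizational: the paper phrases your key claim $E\sge(\lambda_\pi(v)^*y)=0$ equivalently as the multiplicativity identity $E\sge(v^*x)=E\sge(v^*)E\sge(x)$, applies Frol\'{i}k to $\lambda_\pi(v)$ acting on $\D\sge$ rather than to the abstract $v\in\G$, and expresses nilpotency by manipulating $w^*w$ inside the expectation rather than via orthogonality of source and range projections --- all equivalent reworkings of the same computation.
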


\begin{proof} Notice that $v\in\G$ commutes with every element of
$\E(\G)$ if and only if $v\in \P$.  Since $\lambda_\pi$ is a
isomorphism of $\G$ onto $\lambda_\pi(\G)$, we obtain
$\lambda_\pi(\G)\cap \D\sge^c =\lambda_\pi(\P)$.  Therefore,
$\N\sge\subseteq \D\sge^c$.

Take $x\in \D\sge^c$.  Suppose $w\in \lambda_\pi(\G)$ satisfies $w^2
=0$.  Then
\begin{equation}\label{dqc=Nq1} E\sge(w^*x) = w^*w E\sge(w^*x) =
  E\sge(w^*x) w^*w = E\sge(w^*x w^*w) = E\sge((w^*)^2wx) = 0.
\end{equation}

Now choose an arbitrary $v\in \lambda_\pi(\G)$.  Our goal is to show
that (again with $x\in\D\sge^c$) \begin{equation}\label{dqc=Nq2} 
E\sge(v^*x) = E\sge(v^*)E\sge(x).\end{equation}  As in Lemma~\ref{Emap}, $v$
defines a map $\theta_v$ on $\D\sge$ ($d\mapsto vdv^*$).  By Frol\`{i}k's Theorem (see
\cite[Proposition~2.11a]{PittsStReInI}) there are orthogonal
projections $e_0,\ e_1,\ e_2, e_3 \in \D\sge$ such that
\[ v =\sum_{k=0}^3 ve_k, \]
$\theta_v|_{\D e_0} = \id|_{\D e_0}$, and $\theta_v(e_k)e_k = 0$ for
$k = 1,2,3$.

As $e_0$ is the largest projection in $\D_q$ on which $\theta_v|_{\D
e_0} = \id_{\D e_0}$, it follows that $ve_0 = E\sge(v)$.  Also, as
$\theta_v(e_k)e_k = 0$, it follows that $(v e_k)^2 = 0$, for
$k=1,2,3$.  By \eqref{dqc=Nq1}, for $k=1,2,3$, $E\sge(e_k v^* x) = 0$.  Thus
\[ E\sge(v^*x) = \sum_{k=0}^3 E\sge(e_k v^* x) = E\sge(e_0 v^* x) \\
= E\sge(v^*)E\sge(x), \] so~\eqref{dqc=Nq2} holds.

Since $\lambda_\pi(\G)$ spans a weak$^*$-dense subset of $\M_q$, it
follows that for $x\in\D_q^c$ we have
\[ E\sge(x^*x) = E\sge(x^*)E\sge(x).\]
Hence, if $x\in \D\sge^c$ we have
$$ E\sge((x-E\sge(x))^*(x-E\sge(x)) = E\sge(x-E\sge(x))^* E\sge(x-E\sge(x)) = 0. $$
Since $E\sge$ is faithful, it follows that $x = E\sge(x) \in \N_q$.
\end{proof}

Proposition~\ref{ceexist} and Proposition~\ref{Dqc=Nq} now immediately
give the first main theorem of this section.

\begin{theorem}\label{exttovn} $(\M\sge,\N\sge,\D\sge)$ is a \ct.
\end{theorem}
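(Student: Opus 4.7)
The plan is to verify the three defining conditions of Definition~\ref{D: ct} for the triple $(\M\sge,\N\sge,\D\sge)$. Two of the three conditions are essentially furnished by earlier results in the section, so the only substantive thing I have to check by hand is that $\D\sge$ is abelian and regular in $\M\sge$.

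First I would address abelianness of $\D\sge$. The idempotents $\E(\G)$ of the inverse semigroup $\G$ commute with each other, and $\lambda_\pi$ is a semigroup homomorphism (Theorem~\ref{T: Grepdef}, Corollary~\ref{C: HilbertSpRep}), so $\lambda_\pi(\E(\G))$ is a commuting set of projections in $\B(\fA\otimes_\pi\H)$. Taking its double commutant yields an abelian von Neumann algebra, which is $\D\sge$ by definition.

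Next I would verify regularity of $\D\sge$ in $\M\sge$. Since $\lambda$ is a representation of $\G$ by partial isometries (Theorem~\ref{T: Grepdef}), for each $v\in\G$ and each $e\in\E(\G)$,
\[\lambda_\pi(v)\lambda_\pi(e)\lambda_\pi(v)^*=\lambda_\pi(veve^*v^*)=\lambda_\pi(vev^*),\]
and $vev^*\in\E(\G)$ because $\G$ is an inverse semigroup. By normality of operations in the double commutant, $\lambda_\pi(v)$ then normalizes $\D\sge$, so $\lambda_\pi(v)\in\G\N(\M\sge,\D\sge)$. Since $\M\sge=(\lambda_\pi(\G))''$ is by definition the weak-$*$ closure of $\spn(\lambda_\pi(\G))$, the span of the normalizers of $\D\sge$ in $\M\sge$ is weak-$*$ dense in $\M\sge$. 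This is precisely regularity.

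Finally, condition (b) of Definition~\ref{D: ct} is Proposition~\ref{Dqc=Nq}, which asserts $\N\sge=\D\sge^c$, and condition (c) is Proposition~\ref{ceexist}, which provides the faithful normal conditional expectation $E\sge:\M\sge\to\N\sge$. Putting these together, $(\M\sge,\N\sge,\D\sge)$ satisfies all three axioms of a Cartan triple. The only part that required an explicit argument was the regularity and abelianness of $\D\sge$, and both followed directly from the inverse-semigroup structure of $\G$ and the fact that $\lambda_\pi$ is a semigroup homomorphism into partial isometries; no genuine obstacle arises here since the hard technical work (construction of the conditional expectation and identification of $\N\sge$ as the relative commutant) has already been completed in Propositions~\ref{ceexist} and \ref{Dqc=Nq}.
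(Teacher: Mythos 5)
Your proof is correct and takes essentially the same route as the paper: the paper's own proof consists of the single observation that Propositions~\ref{ceexist} and~\ref{Dqc=Nq} give conditions (b) and (c) of Definition~\ref{D: ct}, with the abelianness and regularity of $\D\sge$ (which you spell out via the commutativity of $\E(\G)$ and the identity $\lambda_\pi(v)\lambda_\pi(e)\lambda_\pi(v)^*=\lambda_\pi(vev^\dag)$, using that $\spn\lambda_\pi(\E(\G))$ and $\spn\lambda_\pi(\G)$ are unital $*$-algebras so that double commutants coincide with weak-$*$ closed spans) left implicit as immediate from the construction. The only blemish is the typo $\lambda_\pi(veve^*v^*)$ in your display, which should read $\lambda_\pi(vev^\dag)$.
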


The second main theorem of this section is 
that the extension associated to $(\M\sge,\N\sge,\D\sge)$ gives back
the original extension $\P\hookrightarrow\G\overset{q}\twoheadrightarrow \S$.

\begin{theorem}\label{T: indeppsi} Let $\P$ be a $\N$-Clifford inverse
monoid and suppose $\P\hookrightarrow\G\overset{q}\twoheadrightarrow
\S$ is an extension of the Cartan inverse monoid $\S$ by $\P$.  Let
$(\M\sge, \N\sge,\D\sge)$ be the \ct\ constructed in
Theorem~\ref{exttovn}.  The extension associated to $(\M\sge,\N\sge,
\D\sge)$ is equivalent to the extension
\begin{equation*}\label{F: indeppsi1}
\P\hookrightarrow\G\overset{q}\twoheadrightarrow \S
\end{equation*} from which $(\M\sge,\N\sge,\D\sge)$ was constructed.

Moreover, the isomorphism class of $(\M\sge,\N\sge,\D\sge)$ depends
only upon the equivalence class of the extension (and not on the
choice of representation $\pi$ or section $j$).
\end{theorem}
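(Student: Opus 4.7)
The natural candidate for the equivalence of extensions is the map $\lambda_\pi\colon \G\to \G\N(\M\sge,\D\sge)$ itself. By Theorem~\ref{T: Grepdef} it is an injective semigroup homomorphism, and since each $v\in\G$ normalises $\E(\G)$, $\lambda_\pi(v)$ normalises $\D\sge=(\lambda_\pi(\E(\G)))''$; hence $\lambda_\pi(\G)\subseteq\G\N(\M\sge,\D\sge)$ without further work. The proof therefore reduces to two steps: first, showing that $\lambda_\pi|_\P$ is an extendible isomorphism onto $\G\N(\N\sge,\D\sge)$; second, showing that $\lambda_\pi$ is surjective onto $\G\N(\M\sge,\D\sge)$. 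Once these are established, the induced map on Munn quotients supplies an isomorphism $\tilde\alpha\colon \S\to \S\sge$, and the required equivalence diagrams commute by construction.

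For the first step, use the isomorphism $\alpha\colon \N\sge\to\N$ of Lemma~\ref{isovn}. Lemma~\ref{Vdef}(c) yields $\alpha(\lambda_\pi(v))=\Delta(v)=v$ for every $v\in\P$, so $\alpha^{-1}$ is a normal $*$-isomorphism $\N\to\N\sge$ that extends $\lambda_\pi|_\P$. Since $\E(\P)=\proj(\D)=\E(\G)$, one has $\alpha^{-1}(\proj(\D))=\lambda_\pi(\E(\G))$; both $\alpha^{-1}(\D)$ and $\D\sge$ are generated as von Neumann algebras by this set, giving $\alpha^{-1}(\D)=\D\sge$. Consequently $\alpha^{-1}$ carries $\P=\G\N(\N,\D)$ bijectively onto $\G\N(\N\sge,\D\sge)$, and $\lambda_\pi|_\P$ is the required extendible isomorphism.

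The second step is the main obstacle. Given $w\in\G\N(\M\sge,\D\sge)$, note that $w^*w,ww^*\in\proj(\D\sge)=\lambda_\pi(\E(\G))$, so $w^*w=\lambda_\pi(e)$ and $ww^*=\lambda_\pi(f)$ for unique $e,f\in\E(\G)$. Conjugation by $w$ induces a partial Boolean isomorphism of $\proj(\D\sge)\cong\E(\S)$ from $e$ to $f$. Because $\S$ is a Cartan inverse monoid---fundamental, complete Boolean, with hyperstonean character space (Definition~\ref{D: Cartan inv})---such a partial Boolean isomorphism is realised by a (necessarily unique) element $s\in\S$; the realisation of arbitrary partial Boolean isomorphisms of $\E(\S)$ by elements of $\S$ is the key technical point, to be extracted from the Cartan inverse monoid structure and the groundwork of~\cite{DonsigFullerPittsVNAlExInSe}. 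Given such $s$, the element $\lambda_\pi(j(s))^*w$ commutes with $\D\sge$, so by Proposition~\ref{Dqc=Nq} it lies in $\N\sge$; a short calculation shows it is a partial isometry normalising $\D\sge$, hence belongs to $\G\N(\N\sge,\D\sge)=\lambda_\pi(\P)$ by the first step. Writing $\lambda_\pi(j(s))^*w=\lambda_\pi(p)$ for some $p\in\P$ yields $w=\lambda_\pi(j(s)p)\in\lambda_\pi(\G)$, completing the surjectivity.

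The ``moreover'' statement follows by a two-step application of Theorem~\ref{T: same data}: any two choices $(\pi_1,j_1)$ and $(\pi_2,j_2)$ (or two equivalent representatives of the extension) produce Cartan triples whose associated extensions are each equivalent to $\P\hookrightarrow\G\overset{q}\twoheadrightarrow\S$ by the first part, and hence equivalent to one another; Theorem~\ref{T: same data} then supplies the desired isomorphism of triples.
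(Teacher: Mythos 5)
Your first step and your handling of the ``moreover'' clause are correct and coincide with the paper's own argument: the paper likewise shows $\lambda_\pi|_\P=\alpha^{-1}|_\P$ using Lemmas~\ref{isovn} and~\ref{Vdef}(c), deduces from this that $\G\N(\N\sge,\D\sge)=\lambda_\pi(\P)$ and $\proj(\D\sge)=\lambda_\pi(\E(\G))$, and obtains independence of $(\pi,j)$ by a two-fold application of Theorem~\ref{T: same data}. The gap is in your surjectivity step, precisely at the point you flag as ``the key technical point.'' The claim that every partial Boolean isomorphism between principal ideals of $\E(\S)$ is realised by an element of $\S$ is false for a general Cartan inverse monoid and cannot be extracted from Definition~\ref{D: Cartan inv} or from \cite{DonsigFullerPittsVNAlExInSe}: fundamentality only says that the Munn representation \emph{embeds} $\S$ into the inverse semigroup $T_{\E(\S)}$ of all isomorphisms between principal ideals of $\E(\S)$; it gives no surjectivity. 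For a concrete counterexample, take $\S=\E(\S)=\proj(L^\infty[0,1])$, which by Example~\ref{ex: direct int 2} is the Cartan inverse monoid associated to the \ct\ $(\D,\D,\D)$ with $\D=L^\infty[0,1]$: the only partial Boolean isomorphisms realised by elements of $\S$ are identity maps of principal ideals, whereas $T_{\E(\S)}$ contains every transformation induced by a non-trivial measure-class preserving partial isomorphism of $[0,1]$.

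What actually forces the isomorphism $p\mapsto wpw^*$ to come from $\S$ is the hypothesis $w\in\M\sge=(\lambda_\pi(\G))''$, and your argument never uses it: everything you invoke (the Cartan inverse monoid axioms, Proposition~\ref{Dqc=Nq}) would apply verbatim to a normaliser of $\D\sge$ sitting in a larger von Neumann algebra, where the conclusion fails. Indeed, the realisation claim is essentially equivalent to the equality $\G\N(\M\sge,\D\sge)=\lambda_\pi(\G)$ you are trying to prove, so assuming it is circular. The paper closes this step by exploiting weak-$*$ density of $\spn\lambda_\pi(\G)$ in $\M\sge$ together with the faithful expectation $E\sge$: given $0\neq v\in\G\N(\M\sge,\D\sge)$, density and faithfulness produce $w\in\lambda_\pi(\G)$ with $wE\sge(w^*v)\neq 0$; Arveson's invariant-mean theorem \cite[Theorem~6.2.2]{ArvesonAnOpAl} shows that $uE\sge(u^*)$ is a projection in $\D\sge$ for every $u\in\G\N(\M\sge,\D\sge)$, and applying this with $u=v^*w$ one finds that $p:=v^*wE\sge(w^*v)$ is a projection in $\D\sge$, that $E\sge(w^*v)$ is a partial isometry with source and range both equal to $p$, hence lies in $\G\N(\N\sge,\D\sge)=\lambda_\pi(\P)$, and therefore that $vp=wE\sge(w^*v)$ is a non-zero element of $\lambda_\pi(\G)$ lying under $v$. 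An exhaustion argument as in \cite[Theorem~5.12]{DonsigFullerPittsVNAlExInSe}, using completeness, then assembles such pieces into $v$ itself. Only after this is in hand can one legitimately write $w=\lambda_\pi(j(s))\lambda_\pi(p)$ as in your proposed factorisation.
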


\begin{remark}{Remark}\label{ArvesonCETh} In the proof of
  Theorem~\ref{T: indeppsi} and also in the proof of
  Theorem~\ref{plentyN} below, we shall utilize a result of
Arveson,~\cite[Theorem~6.2.2]{ArvesonAnOpAl}.
In~\cite{ArvesonAnOpAl}, Arveson makes the blanket assumption that all
Hilbert spaces are separable (see~\cite[Section~1.2]{ArvesonAnOpAl}).
However the proof of~\cite[Theorem~6.2.2]{ArvesonAnOpAl} does not
require separability.
\end{remark}

\begin{proof} The argument below is a modification of the proof
of~\cite[Theorem~5.12]{DonsigFullerPittsVNAlExInSe}.  Let $R_M$ and
$R_{M,\pi}$ be the Munn congruences for $\G$ and $\lambda_\pi(\G)$
respectively.  Since $\lambda_\pi$ is an isomorphism of $\G$ onto
$\lambda_\pi(\G)$, $(v,w)$ belongs to $R_M$ if and only if
$(\lambda_\pi(v),\lambda_\pi(w))$ belongs to $R_{M,\pi}$.  Let
$q_\pi:\lambda_\pi(\G)\rightarrow \lambda_\pi(\G)/R_{M,\pi}$ be the
quotient map.   The fact that $\S$ is fundamental implies that $\tilde{\lambda}_\pi:= q_\pi\circ
\lambda_\pi\circ j$ is a multiplicative map of $\S$ onto
$\lambda_\pi(\G)/R_{M,\pi}$.  In fact, $\tilde{\lambda}_\pi$ is an
isomorphism satisfying $\tilde{\lambda}_\pi\circ
q=q_\pi\circ \lambda_\pi$, and furthermore, $\lambda_\pi|_\P$ is an
isomorphism of $\P$ onto $\lambda_\pi(\P)$.  Let $v\in \P$.  By
Equation~\eqref{v*Mqv1} (see Lemma~\ref{v*Mqv}),
$V^*\lambda_\pi(v)V=\pi(v)$.  Thus in the notation of
Lemma~\ref{isovn}, $\alpha^{-1}(v)=\lambda_\pi(v)$.  Therefore
$\lambda_\pi|_\P=\alpha^{-1}|_\P$.  It is now clear that the
extensions
\[\P\hookrightarrow\G\overset{q}\twoheadrightarrow \S\] and
\[\lambda_\pi(\P) \hookrightarrow \lambda_\pi(\G)
\overset{q_\pi}\twoheadrightarrow \widetilde{\lambda}_\pi(\S)\] are equivalent.  For
later use, note that in particular, $\lambda_\pi|_{\E(\G)}$ is an
isomorphism of $\E(\G)$ onto $\E(\lambda_\pi(\G))$.

Our next task is to show that
\begin{equation}\label{indeppsi2} \lambda_\pi(\G) = \G\N(\M_q,\D_q).
\end{equation} It will then follow immediately that $\lambda_\pi(\P)
\hookrightarrow \lambda_\pi(\G) \overset{q_\pi}\twoheadrightarrow
\widetilde{\lambda}_\pi(\S)$ is the extension associated to
$(\M_q,\N_q,\D_q)$.

\textit{Claim 1: If $u\in\G\N(\M_q,\D_q)$, then $uE\sge(u^*)$ is a
projection in $\D\sge$, and
\begin{equation}\label{E: saE} uE\sge (u^*)=E\sge (uE\sge (u^*))=E\sge
(u)E\sge (u^*).
\end{equation}} 

Let $\mean$ be an invariant mean on the abelian group $\U(\D\sge)$.
By~\cite[Theorem~6.2.2]{ArvesonAnOpAl},
\[ uE\sge(u^*)=\mean_{g\in\U(\D\sge)} (ugu^*)g^*\in\D\sge.\]
 Next,
\[uE\sge (u^*)uE\sge (u^*)=uE\sge (u^* uE\sge (u^*))=uu^*u E\sge
((E\sge (u^*))=uE\sge (u^*),\] so $uE\sge (u^*)$ is a projection in
$\D\sge$.  
The equality \eqref{E: saE} is now obvious, so Claim 1 holds.

By construction, $\lambda_\pi(\G)\subseteq \G\N(\M_q,\D_q)$.  To
establish the reverse inclusion, fix $v\in\G\N(\M_q,\D_q)$; without
loss of generality, assume $v\neq 0$.

\textit{Claim 2: There exists $p\in\lambda_\pi(\E(\G))$ such that: a)
$vp\in \lambda_\pi(\G)$, b) $p\leq v^*v$, and c) $vp\neq 0$.}

Since
$\lambda_\pi(\G)''=\M_q$, it follows (as in the proof of
\cite[Proposition~1.3.4]{CameronPittsZarikianBiCaMASAvNAlNoAlMeTh})
that there exists $w\in \lambda_\pi(\G)$ such that $wE\sge(w^*v)\neq 0$.
Let $p=v^*wE\sge(w^*v).$ By Claim 1, $p\in\D_q$ is a projection, so in
particular, $p\in\lambda_\pi(\E(\G))$.  It is evident that $p\leq
v^*v$.  By~\eqref{E: saE},
\begin{equation*}\label{pileft} E\sge (v^*w)E\sge (w^*v) =p,
\end{equation*} so $x:=E\sge(w^*v)$ is a partial isometry in $\N\sge$ with
source projection $p\in\D\sge$.   On the other hand, let
$p':=w^*vE\sge(v^*w)$.  Claim 1 gives $p'\in\D\sge$ and
$p'=E\sge(w^*v)E\sge(v^*w)$.  We have thus shown that both the source
and range projections for $x$ belong to $\D\sge\subseteq\Z(\N\sge)$.
Therefore,
\[p=x^*x=x^*(xx^*)x=(x^*x)(xx^*)=x(x^*x)x^*=xx^*=p'.\] 
Hence 
$E\sge (w^*v)$ is a partial isometry in $\N_q$ whose
source and range projections  both equal $p\in\D\sge$.  Thus,
$E\sge(w^*v)\in \G\N(\N\sge,\D\sge)=\lambda_\pi(\P)$.   This gives $wE\sge
(w^*v)\in\lambda_\pi(\G)$.  Since $E\sge (w^*v)=w^*v(v^*wE\sge
(w^*v)),$ we obtain,
\[ 0\neq wE\sge (w^*v)=w (w^*v(v^*wE\sge (w^*v))) = vv^*wE\sge
(w^*v)=vp.\] Thus Claim 2 holds.

Now argue exactly as in the proof
of~\cite[Theorem~5.12]{DonsigFullerPittsVNAlExInSe} to conclude that
$v\in\lambda_\pi(\G)$.  Therefore, we have shown that
$\lambda_\pi(\G)=\G\N(\M\sge,\D\sge)$.  Hence 
\begin{equation*} \lambda_\pi(\P) \hookrightarrow \lambda_\pi(\G)
\overset{q_\pi}\twoheadrightarrow{q_\pi} \widetilde{\lambda}_\pi(\S)
\end{equation*} is the extension for $(\M_q,\N_q,\D_q).$

Suppose that $\tilde \pi$ is a faithful normal representation of $\N$
and $\tilde j:\S\rightarrow \G$ is an order preserving section for
$q$.  Let $(\tilde\M_q,\tilde\N\sge,\tilde\D\sge)$ be the \ct\
constructed using $\tilde\pi$ and $\tilde j$ as in
Theorem~\ref{exttovn}.  Then the previous paragraphs show that the
extensions associated to $(\M_q,\N_q, \D_q)$ and $(\tilde
\M_q,\tilde\N_q,\tilde\D_q)$ are equivalent extensions.  By
Theorem~\ref{T: same data}, $(\M_q,\N_q,\D_q)$ and
$(\tilde\M_q,\tilde\N_q,\tilde\D_q)$ are isomorphic \ct s.  The proof
is now complete.
\end{proof}

Let $(\M,\N,\D)$ be a \ct\  and let $\phi$ be a faithful normal
semi-finite weight on $\M$ satisfying $\phi \circ E = \phi$.  We end
this section by relating the semi-cyclic representation $(\pi_\phi,
\H_\phi, \eta_\phi)$ and the reproducing kernel Hilbert $\N$-module
$\fA \otimes\N$ constructed from the extension $\P\hookrightarrow\G\overset{q}\twoheadrightarrow \S$ associated to $(\M,\N,\D)$. 

\begin{proposition}\label{H_phi=tenpr} Let $(\M,\N,\D)$ be a \ct,
suppose $\psi$ is a faithful, normal semi-finite weight on $\N$, and
put $\phi:=\psi\circ E$.  Let $(\pi_\psi, \H_\psi, \eta_\psi)$ and
$(\pi_\phi,\H_\phi,\eta_\phi)$ be the semi-cyclic representations of
$\N$ and $\M$ associated with $\psi$ and $\phi$ respectively.  Let
\[\P\hookrightarrow\G\overset{q}\twoheadrightarrow \S\] be the extension associated
to $(\M,\N,\D)$ and let $j:\S\rightarrow \G$ be an order-preserving
section for $q$.  For $s\in\S$, $n\in\N$ and $x\in\fn_\psi$,
$j(s)nx\in \fn_\phi$, and the map
\[(\fA_\D\otimes_\iota \N)\otimes_{\pi_\psi}\H_\psi\ni k_s\otimes
n\otimes \eta_\psi(x)\mapsto \eta_\phi(j(s)nx)\in\H_\phi\] extends to
a unitary operator $W:\fA\otimes_{\pi_\psi}\H_\psi\rightarrow \H_\phi$
such that for every
$v\in\G$, \[W\lambda_{\pi_\psi}(v)W^*=\pi_\phi(v).\]
\end{proposition}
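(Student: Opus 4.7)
The plan is to check that $W$ is a well-defined isometry with dense range, and then verify the intertwining relation. First, I would show $j(s)nx\in\fn_\phi$: since $j(s)^*j(s)=j(s^\dag s)$ lies in $\E(\P)\subseteq \D\subseteq \N$, the element $x^*n^*j(s^\dag s)nx$ belongs to $\N$ and is therefore fixed by $E$, whence
\begin{equation*}
\phi((j(s)nx)^*(j(s)nx))=\psi(x^*n^*j(s^\dag s)nx)\leq \norm{n}^2\psi(x^*x)<\infty.
\end{equation*}

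To check that $W$ is inner-product preserving on the algebraic span of the elementary tensors $k_s\otimes n\otimes\eta_\psi(x)$, I would compute the two inner products separately. On the module side, using~\eqref{iprem} and the formula for the inner product on $\fA\otimes_{\pi_\psi}\H_\psi$, one obtains
\begin{equation*}
\innerprod{k_s\otimes n\otimes \eta_\psi(x), k_t\otimes m\otimes \eta_\psi(y)}=\psi(x^*n^*\,j(s^\dag t\wedge 1)\,my).
\end{equation*}
On the $\H_\phi$ side, $\N$-bimodularity of $E$ reduces the inner product to $\psi(x^*n^*E(j(s)^*j(t))my)$. By Lemma~\ref{Emap}, $E(j(s)^*j(t))=j(s)^*j(t)\cdot j(s^\dag t\wedge 1)$, and by~\eqref{ops1} this equals $j(s^\dag t\wedge 1)$. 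Both sides agree, so $W$ extends to a well-defined isometry on the algebraic span.

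For density of the domain, the span of $\{k_s\otimes n:s\in\S,\,n\in\N\}$ is dense in $\fA$ and $\eta_\psi(\fn_\psi)$ is dense in $\H_\psi$. For density of the range in $\H_\phi$, given $v\in\G$ set $s:=q(v)$ and $w:=j(s)^*v$; then $q(w)=s^\dag s\in\E(\S)$, so $w\in\P\subseteq \N$, and $v=j(s)w$ because $j(s)j(s)^*=j(ss^\dag)=vv^*$ (the two idempotents in $\G$ have the same image $ss^\dag$ in $\S$, and $q|_{\E(\G)}$ is a bijection onto $\E(\S)$). Therefore $\eta_\phi(vn)=\eta_\phi(j(s)\cdot wn)$ lies in the range of $W$ whenever $n\in\fn_\psi$, and Lemma~\ref{fndense} then yields density in $\H_\phi$. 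Consequently $W$ extends to a unitary.

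The final step is to verify $W\lambda_{\pi_\psi}(v)=\pi_\phi(v)W$ on the dense span. Evaluating both sides at $k_s\otimes n\otimes \eta_\psi(x)$ reduces this to the identity $j(q(v)s)\,\sigma(v,s)=vj(s)$ in $\G$. By the definition of $\sigma$, $j(q(v)s)\,\sigma(v,s)=j(q(v)s)j(q(v)s)^*\cdot vj(s)$, and the two idempotents $j(q(v)s)j(q(v)s)^*$ and $vj(s)j(s)^*v^*$ both map under $q$ to $(q(v)s)(q(v)s)^\dag$, so they coincide; multiplying $vj(s)$ on the left by its range projection returns $vj(s)$, proving the identity. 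I expect the main obstacle to be the density of the range in $\H_\phi$; this is exactly where Lemma~\ref{fndense} (and behind it the Haagerup approximation built into its proof) does the real work.
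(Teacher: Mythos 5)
Your proof is correct and follows essentially the same route as the paper's: the same key computation $E(j(s)^*j(t))=j(s^\dag t\wedge 1)$ (via Lemma~\ref{Emap} and~\eqref{ops1}) gives the isometry, Lemma~\ref{fndense} gives density of the range, and the identity $j(q(v)s)\sigma(v,s)=vj(s)$ gives the intertwining. The only cosmetic differences are that the paper justifies the isometric extension by rewriting finite sums over pairwise meet-disjoint subsets of $\S$ while you use the standard inner-product-preservation argument, and you make explicit the factorization $v=j(q(v))\cdot\bigl(j(q(v))^*v\bigr)$ with $j(q(v))^*v\in\P$ needed to apply Lemma~\ref{fndense}, which the paper leaves implicit.
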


\begin{proof} For $i=1,2$, let $s_i\in\S$, $n_i\in\N$ and
$x_i\in\fn_\psi$.  Then
\[\phi((j(s_i)n_ix_i)^*(j(s_i)n_ix_i))=\psi(x_i^*n_i^*E(j(s_i)^*j(s_i))n_ix_i)\leq
\norm{n_i}^2\psi(x_i^*x_i), \] so $j(s_i)n_ix_i\in \fn_\phi$.  Recall
that for any $v\in\G\N(\M,\D)$, $E(v)=vj(q(v)\wedge 1)$.  In
particular, using Proposition~\ref{P: order per sec}, we have
\[E(j(s_1)^*j(s_2))=j(s_1)^* j(s_2)j(s_1^\dag s_2\wedge 1)=j(s_1^\dag
s_2 \wedge 1).\] So
\begin{align*} \innerprod{(k_{s_1}\otimes n_1\otimes \eta_\psi(x_1)),
(k_{s_2}\otimes n_2\otimes \eta_\psi(x_2))} &=
\psi(x_1^*n_1^*j(s_1^\dag s_2\wedge 1)n_2x_1)\\
&=\psi(x_1^*n_1^*E(j(s_1)^*j(s_2))n_2x_1)\\
&=\innerprod{\eta_\phi(j(s_1)n_1x_1),\eta_\phi(j(s_2)n_2x_2)}.
\end{align*}

As every element in $\spn\{k_s\otimes n\otimes x: s\in\S, n\in \N, x\in
\fn_\psi\}$ can be written as $\sum_{a\in A} k_{a}\otimes n_a\otimes
x_a$ where $A\subseteq \S$ is a finite pairwise meet disjoint set,
$\{n_a: a\in A\}\subseteq \N$ and $\{x_a: a\in A\}\subseteq \fn_\psi$,
it follows that $k_s\otimes n\otimes x\mapsto \eta_\phi(j(s)nx)$
extends to an isometry $W:\fA\otimes_{\pi_\psi}\H_\psi\rightarrow
\H_\phi$.  By Proposition~\ref{fndense},
$\spn\{\eta_\phi(j(s)nx):s\in\S, n\in\N, x\in \fn_\psi\}$ is dense in
$\H_\phi$, so $W$ is a unitary operator.

If $v\in\G\N(\M,\D), s\in\S, n\in \N$ and $x\in\fn_\psi$,
\begin{align*} W\lambda_{\pi_\psi}(v)(k_s\otimes n\otimes x)
&=W(k_{q(v)s}\otimes \sigma(v,s)n\otimes x)\\ &=
\eta_\phi(j(q(v)s)\sigma(v,s)nx)\\ &=\eta_\phi(vj(s)nx)=\pi_\phi(v)
W(k_s\otimes n\otimes x).
\end{align*} Thus, $W\lambda_{\pi_\psi}(v)W^*=\pi_\phi(v).$
\end{proof}

\section{The spectral theorem for bimodules and Aoi's
theorem}\label{sec: spectral thm} Throughout this section,
$(\M,\N,\D)$ will be a \ct\ with associated extension
\[\P\hookrightarrow\G\overset{q}\twoheadrightarrow \S,\] and $j:\S\rightarrow\G$
will be a fixed choice of an order-preserving section for $q$.  The
goal in this section is study the $\N$-bimodules in $\M$.  Recall the
following definition from \cite{DonsigFullerPittsVNAlExInSe}.

\begin{definition}\label{def: spec set} A subset $A$ of a Cartan
inverse monoid $\S$ is a \emph{spectral set} if
\begin{enumerate}
\item $s \in A$ and $t \leq s$ implies that $t \in A$; and
\item if $\{s_i\}_{i\in I}$ is a pairwise orthogonal family in $A$,
then $\bigvee_{i\in I} s_i \in A$.
\end{enumerate}
\end{definition} In Theorem~\ref{spthbi} we prove a Spectral Theorem
for Bimodules.  Will show a one-to-one correspondence between the
spectral sets in $\S$ and a large class of weak-$*$ closed
$\N$-bimodules: the Bures-closed $\N$-bimodules (see
Definition~\ref{BuresTop}).  We go on to study the intermediate von
Neumann algebras $\N \subseteq \L \subseteq \M$, giving a
generalization of Aoi's Theorem \cite{AoiCoEqSuInSu} in
Theorem~\ref{Aoi}.  Several of these theorems require that the \ct\ is
a full \ct.

\subsection{$\N$-bimodules} We begin by showing that weak-$*$ closed
$\N$-bimodules give rise to non-empty spectral sets.  In particular,
Theorem~\ref{plentyN} shows that when $(\M,\N,\D)$ is a full \ct , any
weak-$*$ closed $\N$-bimodule contains an abundance of elements of
$\G\N(\M,\D)$.  Example~\ref{notfull} below gives a simple example showing fullness is
necessary.

\begin{theorem} \label{plentyN} Let $(\M,\N,\D)$ be a full \ct.
Suppose $(0)\neq\B\subseteq \M$ is a weak-$*$-closed $\N$-bimodule.
Then
\[\{0\}\neq \G\N(\M,\D)\cap \B.\] In fact, for every $x\in \B$ and
$v\in\G\N(\M,\D)$, $vE(v^*x)$ is a linear combination of at most four
elements of $\G\N(\M,\D)\cap \B$.
\end{theorem}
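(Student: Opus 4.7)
The plan is first to land $vE(v^*x)$ inside $\B$ via Arveson's invariant-mean description of $E$, then to identify a natural ``left-slice'' $\N$-bimodule of $\N$ as a central-projection ideal (using fullness), after which a standard corner four-unitary decomposition yields the claimed four summands. The first (nonemptiness) assertion will then drop out via Bures density.

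By Arveson's theorem (cf.\ Remark~\ref{ArvesonCETh}), applied to the amenable conjugation action of $\U(\D)$ on $\M$, one has
\begin{equation*}
E(y)=\mean_{u\in\U(\D)}uyu^*,\qquad y\in\M.
\end{equation*}
Substituting $y=v^*x$ and pulling $v$ inside the weak-$*$ continuous mean gives $vE(v^*x)=\mean_{u\in\U(\D)}(vuv^*)\,x\,u^*$. Since $v\in\G\N(\M,\D)$, $vuv^*\in\D\subseteq\N$, while $u^*\in\U(\D)\subseteq\N$, so every integrand lies in the $\N$-bimodule $\B$; weak-$*$ closedness and convexity of $\B$ then force $vE(v^*x)\in\B$.

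Now set $\B_v:=\{n\in\N:vn\in\B\}$. This is weak-$*$ closed (preimage of $\B$ under the normal map $n\mapsto vn$) and is clearly a right $\N$-module. It is also a left $\N$-module: the identity $vm=(vmv^*)v$ holds for every $m\in\N$, because $v^*v\in\D\subseteq\Z(\N)$ commutes with $m$ and $v(1-v^*v)=0$, and moreover $v\N v^*\subseteq\N$ (a short check using $v^*\D v\subseteq\D$ together with $\D\subseteq\Z(\N)$); consequently $v(mn)=(vmv^*)(vn)\in\N\cdot\B\subseteq\B$. So $\B_v$ is a weak-$*$ closed two-sided ideal of $\N$, and by \emph{fullness} $\Z(\N)=\D$, such an ideal must have the form $\B_v=z_v\N$ for some projection $z_v\in\D$. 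In particular $E(v^*x)\in z_v\N$ lives in the corner von Neumann algebra $z_v\N z_v$ (with unit $z_v$), and there decomposes as $\sum_{k=1}^{4}c_ku_k$ with $u_k$ corner unitaries (Cartesian decomposition combined with $h\mapsto h\pm i\sqrt{z_v-h^2}$). Each $u_k$ satisfies $u_k^*u_k=u_ku_k^*=z_v\in\D$, and since $u_k\in\N$ commutes with $\D$ the conjugation $u_k\D u_k^*=\D z_v\subseteq\D$ places $u_k\in\P$. Moreover $u_k\in z_v\N=\B_v$ gives $vu_k\in\B$, while $vu_k\in\G\N(\M,\D)$ by closure under product. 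Thus $vE(v^*x)=\sum_k c_k(vu_k)$ exhibits $vE(v^*x)$ as a linear combination of at most four elements of $\G\N(\M,\D)\cap\B$.

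For the first assertion, if $x\in\B$ is nonzero then Theorem~\ref{BuresApx1} forces $E(v^*x)\neq 0$ for some $v\in\G\N(\M,\D)$ (otherwise $x$ would be the Bures limit of the zero net and hence vanish); then $vE(v^*x)\neq 0$, so at least one $vu_k$ in the decomposition is a nonzero element of $\G\N(\M,\D)\cap\B$. The main obstacle is the identification $\B_v=z_v\N$ with $z_v\in\D$: fullness is essential here, since without $\D=\Z(\N)$ the projection $z_v$ would only lie in $\Z(\N)$ and the corner unitaries $u_k$ would fail to normalize $\D$, cf.\ Example~\ref{notfull}.
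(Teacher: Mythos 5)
Your proof is correct, and its engine is the same as the paper's: Arveson's invariant-mean formula for $E$ places $vE(v^*x)$ in the weak-$*$ closed convex set $\B$, fullness converts a weak-$*$ closed two-sided ideal of $\N$ into $z\N$ with $z\in\proj(\Z(\N))=\proj(\D)$, and a four-unitary decomposition in the unital corner $z\N$ produces the four elements of $\G\N(\M,\D)\cap\B$. The deviations are minor. First, you use the largest admissible ideal $\B_v=\{n\in\N: vn\in\B\}$, whereas the paper uses the weak-$*$ closed ideal generated by $E(v^*x)$ together with the factorization $v\,n_1E(v^*x)n_2=(vn_1v^*)(vE(v^*x))n_2$; both routes rest on the same facts ($v\N v^*\subseteq\N$, $v^*v\in\Z(\N)$) and yield the same conclusion. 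Second, for the nonemptiness assertion the paper argues directly and more cheaply: since $\spn\G\N(\M,\D)$ is weak-$*$ dense in $\M$, separate weak-$*$ continuity of multiplication together with normality and faithfulness of $E$ produce some $v$ with $E(v^*x)\neq 0$. You instead invoke Theorem~\ref{BuresApx1}. That is legitimate and non-circular --- the proof of Theorem~\ref{BuresApx1} depends on Lemma~\ref{Nqcom}, Proposition~\ref{PsSOT} and Proposition~\ref{H_phi=tenpr}, not on Theorem~\ref{plentyN}, and the Bures topology is Hausdorff by faithfulness of $E$, so the limit of the zero net is zero --- but it imports noticeably heavier machinery, and reverses the paper's order of presentation, where an elementary density argument suffices.
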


\begin{proof} Let $x\in\B$ be non-zero. Since $\M$ is the weak-$*$
closed span of $\G\N(\M,\D)$, there exists $v\in\G\N(\M,\D)$ such that
$E(v^*x)\neq 0$, and hence 
$vE(v^*x)\neq 0$.  By \cite[Theorem~6.2.2]{ArvesonAnOpAl} (see
Remark~\ref{ArvesonCETh} above), for any
$y\in \M$,
\[E(y)=\mean_{U\in\U(\D)} U^*yU.\] Therefore,
\[vE(v^*x)=\mean_{U\in\U(\D)} (vU^*v^*)xU \in \B.\]

Let $J$ be the weak-$*$
closed, two-sided ideal in $\N$ generated by $E(v^*x)$.  For any $n_1,
n_2 \in \N$ we have
\begin{align*} v (n_1 E(v^*x) n_2) = (v n_1 v^*) (vE(v^*x)) n_2 \in
\B.
\end{align*} Since $\B$ is weak-$*$ closed, it follows that
$vJ\subseteq \B$.  Let $p\in \Z(\N)=\D$ be such that $J=p\N$.  Then,
$vp\in\B\cap \G\N(\M,\D)$.  Since $vE(v^*x)=vpE(v^*x)$, $0\neq vp$.

Since $p\N$ is a von Neumann algebra (with unit $p$), $E(v^*x)$ is a
linear combination of four unitary elements of $p\N$.  As
$p\in\Z(\N)$, $\U(p\N)=\{pw: w\in \U(\N)\}$.  Also, for any unitary
$w\in\U(\N)$ we have $vpw\in\G\N(\M,\D)$.  Thus, $vE(v^*x)$ is a
linear combination of at most four elements of $\G\N(\M,\D)$.
\end{proof}

The following corollary of the proof of Theorem~\ref{plentyN} will be
needed in the sequel.

\begin{corollary}\label{plentyNCor} Let $(\M,\N,\D)$ be a (not
necessarily full) \ct.  For any $x\in \M$ and $v\in\G\N(\M,\D)$,
$vE(v^*x)$ belongs to the weak-$*$ closed $\D$-bimodule generated by
$x$.
\end{corollary}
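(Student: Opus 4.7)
The corollary is essentially implicit in the first display of the proof of Theorem~\ref{plentyN}; my plan is to extract and record that observation, being careful about where fullness was used and where it was not.

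The plan is as follows. Fix $x\in\M$ and $v\in\G\N(\M,\D)$, and let $\B_x$ denote the weak-$*$ closed $\D$-bimodule generated by $x$. By Arveson's theorem~\cite[Theorem~6.2.2]{ArvesonAnOpAl} (cf.\ Remark~\ref{ArvesonCETh}), there is an invariant mean $\mean$ on the abelian unitary group $\U(\D)$ such that for every $y\in\M$,
\[E(y)=\mean_{U\in\U(\D)} U^* y U,\]
where the mean is interpreted as a weak-$*$ limit of convex combinations of elements of $\{U^*yU:U\in\U(\D)\}$. Applying this to $y=v^*x$ and multiplying on the left by $v$ gives
\[vE(v^*x)=\mean_{U\in\U(\D)} (vU^*v^*)\,x\,U.\]

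Next, I would verify that each term on the right lies in $\B_x$. Since $v\in\G\N(\M,\D)$, we have $v\D v^*\subseteq\D$, so $vU^*v^*\in\D$ for every $U\in\U(\D)$. Hence each element $(vU^*v^*)xU$ has the form $d_1\,x\,d_2$ with $d_1,d_2\in\D$, and therefore belongs to the $\D$-bimodule generated by $x$, a fortiori to $\B_x$. Consequently, every convex combination of such elements lies in $\B_x$, and since $\B_x$ is weak-$*$ closed, the weak-$*$ limit of such convex combinations — namely $vE(v^*x)$ — belongs to $\B_x$ as well.

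The argument uses only that $v$ normalises $\D$ (so the left multipliers $vU^*v^*$ stay in $\D$) and the existence of the conditional expectation $E$; fullness of the Cartan triple is nowhere invoked, so the conclusion applies to arbitrary Cartan triples. The only mild subtlety is the interpretation of the invariant mean as a weak-$*$ limit of convex combinations, but that is precisely the content of Arveson's theorem as used in Theorem~\ref{plentyN}; no further obstacle arises.
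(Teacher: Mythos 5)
Your proposal is correct and is essentially the paper's own argument: the paper states this corollary as a byproduct of the proof of Theorem~\ref{plentyN}, whose first display is exactly the Arveson invariant-mean identity $vE(v^*x)=\mean_{U\in\U(\D)}(vU^*v^*)xU$ that you isolate. Your observations that each term $(vU^*v^*)xU$ lies in the $\D$-bimodule generated by $x$ (since $v\D v^*\subseteq\D$) and that fullness is never invoked are precisely why the paper can drop the fullness hypothesis in the corollary.
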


\begin{example}\label{notfull} Let $\M$ be any von Neumann algebra
with non-trivial center.  Then $(\M,\M,\bbC I)$ is a \ct\ which is not
full.  Let $p$ be a central projection in $\M$ with $0<p<I$.  Then $\M
p$ is a weak-$*$ closed $\M$-bimodule.  However $\M p \cap
\G\N(\M,\bbC I) = \{0\}$.  Thus, the condition of fullness in
Theorem~\ref{plentyN} is necessary.
\end{example}

A natural problem is to characterize the weak-$*$ closed
$\N$-bimodules in $\M$.  Given a weak-$*$ closed $\N$-bimodule
$B\subseteq \M$, one might hope to use Theorem~\ref{plentyN} to
reconstruct a given element $x\in B$ from the elements of $B\cap
\G\N(\M,\D)$.   
However, for doing this, the weak-$*$ topology is not generally the
appropriate topology.  Instead, as Mercer shows in \cite{MercerCoFoDiCrPrVNAl}, the Bures-topology turns out to be
the ``right" topology to handle such reconstruction problems.  This phenomenon was
also observed in studying bimodules in the Cartan pair case by Cameron,
Pitts and Zarikian \cite{CameronPittsZarikianBiCaMASAvNAlNoAlMeTh}
(see also \cite{DonsigFullerPittsVNAlExInSe}), and in the
crossed-product von Neumann algebras by Cameron and Smith
\cite{CameronSmithBiCrPrVNAl, CameronSmithInSuBiGeCrPrVNAl}.  Our next
goal is Theorem~\ref{BuresApx1}, which gives a method for
reconstructing $x\in B$ using $\G\N(\M,\D)\cap B$ when $B\subseteq \M$ is a
Bures-closed $\N$-bimodule.  We begin with recalling the definition of
the Bures-topology.

\begin{definition}\label{BuresTop} Let $\L\subseteq \M$ be an
inclusion of von Neumann algebras and assume there is a faithful
normal conditional expectation $E_\L:\M\rightarrow \L$.  The
\textit{$E_\L$-Bures topology} (or simply \textit{Bures topology} when
the context is clear) is the locally convex topology determined by the
family of seminorms,
\[\M\ni x \mapsto \rho(E(x^*x))^{1/2}, \rho\in \L_*^+.\]
\end{definition}

The Bures topology was introduced in \cite{BuresAbSuvNAl} in the case
when $\M$ is a factor and $\L$ is abelian.
By~\cite[Lemma~3.1]{CameronSmithBiCrPrVNAl}, for any convex set
$C\subseteq \M$, the Bures closure of $C$ contains the weak-$*$
closure of $C$, that is,
\[\text{cl}^{\text{weak-}*}(C)\subseteq \text{cl}^{\text{Bures}}(C).\]

Take $x\in \M$.  We showed in Theorem~\ref{plentyN} and
Corollary~\ref{plentyNCor} that for each $v \in \G\N(\M,\D)$,
$vE(v^*x)$ is in the $\N$-bimodule generated by $x$.  We now show
that, in the Bures topology, we can recover $x$ from the elements of
the form $vE(v^*x)$.

\begin{definition} For a \ct\ $(\M,\N,\D)$, a subset $\Y\subseteq
\G\N(\M,\D)$ is \textit{$E$-orthogonal} if whenever $v, w\in \Y$ with
$v\neq w$, $E(v^*w)=0$.
\end{definition}

\begin{theorem}\label{BuresApx1} Let $(\M,\N,\D)$ be a (not
necessarily full) \ct\ and let $\Y\subseteq \G\N(\M,\D)$ be a maximal
$E$-orthogonal subset.  Let $\Lambda$ be the set of all finite subsets
of $\Y$ directed by inclusion. For $x\in \M$ and $F\in\Lambda$, let
$x_F:=\sum_{u\in F} uE(u^*x)$.  Then the net $(x_F)_{F\in \Lambda}$
converges in the Bures topology to $x$.
\end{theorem}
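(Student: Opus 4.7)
The plan is to reinterpret the Bures convergence as strong operator convergence of a sum of pairwise orthogonal projections on a GNS Hilbert space, and then invoke Proposition~\ref{PsSOT} via a unitary identification paralleling Proposition~\ref{H_phi=tenpr}. Fix $\rho\in\N_*^+$ and let $\omega := \rho\circ E\in\M_*^+$, with GNS triple $(\pi_\omega,\H_\omega,\eta_\omega)$. For each $u\in\G\N(\M,\D)$, Schwarz's inequality for $E$ shows that $P_u^\omega\eta_\omega(m) := \eta_\omega(uE(u^*m))$ extends to a self-adjoint projection on $\H_\omega$, and the $E$-orthogonality of $\Y$ gives $P_u^\omega P_v^\omega = 0$ for $u\neq v$ in $\Y$. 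A direct computation, exploiting $u^*u\cdot E(u^*m)=E(u^*m)$, yields
\[\rho(E((x-x_F)^*(x-x_F))) = \|(I-P_F^\omega)\eta_\omega(x)\|^2,\qquad P_F^\omega := \sum_{u\in F}P_u^\omega,\]
so the theorem reduces to proving $\sum_{u\in\Y}P_u^\omega = I$ strongly on $\H_\omega$.

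Two observations then translate this into the extension data. First, $P_u^\omega$ depends only on the Munn class of $u$: if $u$ is Munn-equivalent to $u'$, one writes $u = u'q$ with $q\in\P$ satisfying $q^*q=qq^*=u^*u=u'^*u'$, and the $\N$-bilinearity of $E$ together with the partial isometry identity $u'^*u'\cdot u'^* = u'^*$ gives $uE(u^*m)=u'E(u'^*m)$. In particular $P_u^\omega = P_{j(q(u))}^\omega$. Second, Lemma~\ref{Emap} combined with \eqref{sourceidem} implies that $E(u_1^*u_2)=0$ if and only if $q(u_1)\wedge q(u_2)=0$; hence $\Y$ being maximal $E$-orthogonal in $\G\N(\M,\D)$ corresponds to $\B := \{q(u):u\in\Y\}$ being a maximal meet-disjoint subset of $\S$, where the maximality direction uses that any non-zero $s\in\S$ meet-disjoint to $\B$ produces $j(s)\in\G\N(\M,\D)\setminus\{0\}$ that is $E$-orthogonal to $\Y$.

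To apply Proposition~\ref{PsSOT}, I mimic Proposition~\ref{H_phi=tenpr} and construct a unitary $W\colon\fA\otimes_{\pi_\rho}\H_\rho\to\H_\omega$ extending $k_s\otimes n\otimes\eta_\rho(x)\mapsto\eta_\omega(j(s)nx)$. Isometry uses the inner product identity together with $E(j(s_1)^*j(s_2))=j(s_1^\dag s_2\wedge 1)$ (Proposition~\ref{P: order per sec}), while surjectivity uses the $\sigma$-strong-$*$ density of $\spn\G\N(\M,\D)$ in $\M$ (obtained from regularity via Kaplansky) combined with normality of $\omega$. A direct calculation then gives $WP_{s,\pi_\rho}W^* = P_{j(s)}^\omega$. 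Since the SOT-convergence in Proposition~\ref{PsSOT} requires only a normal (not necessarily faithful) representation of $\N$, applying it with $\pi_\rho$ and the maximal meet-disjoint set $\B$ yields $\sum_{s\in\B}P_{s,\pi_\rho}\to I$ strongly; conjugating by $W$ and invoking the Munn invariance gives $\sum_{u\in\Y}P_u^\omega = \sum_{s\in\B}P_{j(s)}^\omega = I$ on $\H_\omega$, as required.

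I expect the main technical obstacle to be the construction of the unitary $W$. The template of Proposition~\ref{H_phi=tenpr} transfers almost verbatim, but the density of the range of $W$ needs a separate argument because $\omega$ is a bounded positive functional rather than the semi-finite weight used there; this is handled by combining the weak-$*$ density of $\spn\G\N(\M,\D)$ (from regularity of $\D$) with Kaplansky's theorem to extract $\sigma$-strong-$*$ approximation, which the normality of $\omega$ then converts into convergence in the relevant $\omega$-seminorm.
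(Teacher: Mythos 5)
Your proposal is correct, and although it runs on the same engine as the paper's proof --- Proposition~\ref{PsSOT} transported to a GNS-type Hilbert space by a unitary of the kind built in Proposition~\ref{H_phi=tenpr} --- the implementation is genuinely different. The paper fixes a single faithful normal semi-finite weight $\psi$ on $\N$, works in the semi-cyclic representation of $\phi=\psi\circ E$, quotes Proposition~\ref{H_phi=tenpr} verbatim to obtain $\sum_{u\in\Y}P_{q(u)}=I$ strongly on $\H_\phi$, and then needs the extra fact (argued as in Cameron--Pitts--Zarikian) that the Bures topology is also given by the vector seminorms $m\mapsto\norm{\pi_\phi(m)\xi}$ with $\xi\in\ran(P)$. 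You instead treat each bounded functional $\omega=\rho\circ E$, $\rho\in\N_*^+$, separately, which makes the reduction immediate: since $P_F^\omega\eta_\omega(x)=\eta_\omega(x_F)$, the Bures seminorm of $x-x_F$ relative to $\rho$ is literally $\norm{(I-P_F^\omega)\eta_\omega(x)}$, with no seminorm-coincidence lemma required; this is closer in spirit to the paper's own Lemma~\ref{fndense}, which works with exactly such a family $\rho\circ E$. The price is that you must (i) re-prove the intertwining unitary $W$ for bounded functionals rather than weights, and (ii) note that Proposition~\ref{PsSOT}, though stated for faithful $\pi$, never uses faithfulness. Both points check out: the isometry computation for $W$ needs only $E(j(s_1)^*j(s_2))=j(s_1^\dag s_2\wedge 1)$, density of its range follows from regularity, Kaplansky, and normality of $\omega$ exactly as in Lemma~\ref{fndense}, and the proof of Proposition~\ref{PsSOT} invokes only Corollary~\ref{Qsdefcor} (a statement about $\fA$ alone) together with the fact that $\tau_1$-strict convergence of bounded nets survives tensoring with $I_\H$ for any normal $\pi$. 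A further merit of your write-up is that it makes explicit two facts the paper uses silently when invoking Proposition~\ref{PsSOT}: that maximality of $\Y$ as an $E$-orthogonal set translates, via Lemma~\ref{Emap} and \eqref{sourceidem}, into maximality of $\B=\{q(u):u\in\Y\}$ as a meet-disjoint subset of $\S$, and that the projections $P_u^\omega$ depend only on the Munn class of $u$.
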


\begin{proof} Let
  $\P\hookrightarrow \G\overset{q}\twoheadrightarrow \S$ be the
  extension associated to $(\M,\N,\D)$, let $\psi$ be a faithful,
  normal semi-finite weight on $\N$, let $\phi=\psi\circ E$, and let
  $(\pi_\phi,\H_\phi, \eta_\phi)$ be the semi-cyclic representation of
  $\M$ associated to $\phi$.  For any $v\in \G$, the map
  $\M\ni x\mapsto vE(v^*x)$ leaves $\fn_\phi$ invariant and depends
  only on $s=q(v)$.  Further, when $x\in\fn_\phi$,
  $\eta_\phi(x)\mapsto \eta_\phi(vE(v^*x))$ is contractive, and extends to a projection $P_s\in \B(\H_\phi)$.  (In the
  notation of Lemma~\ref{Nqcom} and Proposition~\ref{H_phi=tenpr},
  $P_s=WP_{s,\pi_\phi}W^*$).  When $s=1$, write $P$ instead of
  $P_1$.  

Arguing as
in~\cite[Lemma~2.2]{CameronPittsZarikianBiCaMASAvNAlNoAlMeTh}, we find
that the two families of semi-norms on $\M$,
\[\{\M\ni m\mapsto \sqrt{\tau(E(m^*m))}: \tau\in \N_*^+\}\dstext{and}
\{\M\ni m\mapsto \norm{\pi_\phi(m)\xi}: \xi\in \ran(P)\}\] coincide.
These families of semi-norms define the Bures topology on $\M$
(see~\cite[Definition~2.2.3]{CameronPittsZarikianBiCaMASAvNAlNoAlMeTh}).

We now argue exactly as in the proof
of~\cite[Proposition~2.4.4]{CameronPittsZarikianBiCaMASAvNAlNoAlMeTh}.
Let $n \in \fn_\phi \cap \N$.  Then
\begin{align*} \pi_\phi(x_F)\eta_\phi(n) &= \sum_{u \in F}
\eta_\phi(uE(u^*xn))\\ & = \sum_{u\in F}
\pi_\phi(u)P\pi_\phi(u)^*\eta_\phi(xn)\\ &=\sum_{u\in F}
P_{q(u)}\eta_\phi(xn)=\sum_{u\in F} P_{q(u)}\pi_\phi(x)\eta_\phi(n).
\end{align*} Hence for every $\xi \in \overline{\eta_\phi(\fn_\phi
\cap \N)}$,
\[ \pi_\phi(x_F)\xi = \sum_{u\in F} P_{q(u)}\pi_\phi(x)\xi. \] By
Proposition~\ref{PsSOT} and Proposition~\ref{H_phi=tenpr}, $I =
\sum_{u\in\Y} P_{q(u)}$ (where the sum converges strongly in
$\B(\H_\phi)$).  Thus for every $\xi \in \overline{\eta_\phi(\fn_\phi
\cap \N)}$,
\[ \pi_\phi(x_F)\xi \rightarrow \pi_\phi(x)\xi. \] Therefore, $x_F
\stackrel{\text{Bures}}{\rightarrow} x$.
\end{proof}

We now show that the Bures closure of a weak-$*$ closed $\N$-bimodule
$\B$ contains exactly the same groupoid normalizers as $\B$ itself.
The reader should note that this result gives the versions
of~\cite[Proposition~2.5.3 and
Theorem~2.5.1]{CameronPittsZarikianBiCaMASAvNAlNoAlMeTh} appropriate
to our context.

\begin{proposition}\label{samegn} Let $\B\subseteq \M$ be a weak-$*$
closed $\N$-bimodule, and set
\[\B_0=\overline{\spn}^{w*}(\G\N(\M,\D)\cap
\B)\dstext{and}\B_1:=\overline{\spn}^{\text{Bures}}(\G\N(\M,\D)\cap
\B),\] Then $\B_0$ and $\B_1$ are weak-$*$ closed $\N$-bimodules
satisfying $\B_0\subseteq \B_1$ and
\begin{equation*} \G\N(\M,\D)\cap \B_0=\G\N(\M,\D)\cap \B=
\G\N(\M,\D)\cap \B_1.
\end{equation*} Furthermore, when $(\M,\N,\D)$ is a full \ct ,
\begin{equation*} \B_0\subseteq \B\subseteq
\B_1=\overline{\B}^{\text{Bures}}.
\end{equation*}
\end{proposition}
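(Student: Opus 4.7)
The plan is to first verify that $\B_0$ and $\B_1$ are weak-$*$ closed $\N$-bimodules and dispose of the easy half of the intersection identities, then handle the $\B_1$ intersection via a Cauchy--Schwarz argument combined with Corollary~\ref{plentyNCor}, and finally treat the full case using Theorem~\ref{BuresApx1}.

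For the bimodule structure, $\spn(\G\N(\M,\D)\cap \B)$ is already an $\N$-bimodule: since $\U(\N)\subseteq \P\subseteq\G\N(\M,\D)$, products of groupoid normalizers are again groupoid normalizers, and every element of $\N$ is a combination of four unitaries, so multiplying on either side by $n\in\N$ lands back in the span, using that $\B$ is itself an $\N$-bimodule. Weak-$*$ continuity of multiplication then yields the bimodule property for $\B_0$. For $\B_1$, left multiplication by $n\in\N$ is Bures-continuous from $n^*n\leq\|n\|^2 I$ together with positivity of $E$, while right multiplication is Bures-continuous because $E((mn)^*(mn))= n^*E(m^*m)n$ and $\rho\mapsto \rho(n^*\cdot n)$ is a normal positive functional on $\N$ whenever $\rho$ is. Then \cite[Lemma~3.1]{CameronSmithBiCrPrVNAl} (cited in the introduction) gives that $\B_1$, being a Bures-closed $\N$-bimodule, is also weak-$*$ closed. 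The inclusion $\B_0\subseteq \B$ is immediate from weak-$*$ closedness of $\B$, and $\B_0\subseteq \B_1$ follows because $\B_1$ is weak-$*$ closed and contains $\spn(\G\N(\M,\D)\cap\B)$. Both inclusions $\G\N(\M,\D)\cap\B\subseteq \G\N(\M,\D)\cap\B_i$ (for $i=0,1$) are trivial, and the reverse for $i=0$ follows from $\B_0\subseteq \B$.

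The key new step is the identity $\G\N(\M,\D)\cap\B_1 = \G\N(\M,\D)\cap \B$. Fix $v\in \G\N(\M,\D)\cap\B_1$ and a Bures-convergent net $(x_\alpha)$ in $\spn(\G\N(\M,\D)\cap\B)\subseteq \B$ with $x_\alpha\to v$. For each $\rho\in\N_*^+$, Cauchy--Schwarz applied to the positive semi-definite sesquilinear form $(a,b)\mapsto \rho(E(a^*b))$ on $\M$ gives
\[
|\rho(E(v^*(v-x_\alpha)))|^2 \leq \rho(E(v^*v))\,\rho(E((v-x_\alpha)^*(v-x_\alpha))),
\]
and Bures convergence forces the right-hand side to zero. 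Thus $E(v^*x_\alpha)\to v^*v$ weak-$*$ in $\N$, and since left multiplication by $v$ is weak-$*$ continuous in $\M$, $vE(v^*x_\alpha)\to vv^*v=v$ weak-$*$. By Corollary~\ref{plentyNCor}, each $vE(v^*x_\alpha)$ belongs to the weak-$*$ closed $\D$-bimodule generated by $x_\alpha\in\B$, which is contained in $\B$, so by weak-$*$ closedness $v\in\B$. The main obstacle of the whole proof is precisely here: bare Bures convergence only controls vectors in $\ran P$, not matrix coefficients at arbitrary groupoid normalizers, and Cauchy--Schwarz is the lever that converts Bures convergence into weak-$*$ control of the correct Fourier-type coefficient $E(v^*x_\alpha)$, after which Corollary~\ref{plentyNCor} is what keeps the resulting approximating net inside $\B$.

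For the full case, take $x\in\B$ and apply Theorem~\ref{BuresApx1} with a maximal $E$-orthogonal subset $\Y$ of $\G\N(\M,\D)$ to obtain $x_F = \sum_{u\in F} uE(u^*x) \to x$ in the Bures topology, with $F$ ranging over the finite subsets of $\Y$ directed by inclusion. The ``moreover'' clause of Theorem~\ref{plentyN}, which is where fullness enters, expresses each $uE(u^*x)$ as a linear combination of at most four elements of $\G\N(\M,\D)\cap\B$; hence $x_F\in\spn(\G\N(\M,\D)\cap\B)$ and $x\in\B_1$. This gives $\B\subseteq \B_1$. Combined with $\spn(\G\N(\M,\D)\cap\B)\subseteq \B\subseteq \overline{\B}^{\text{Bures}}$ and the Bures-closedness of $\B_1$, this yields $\B_1 = \overline{\B}^{\text{Bures}}$, completing the proof.
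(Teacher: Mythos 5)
Your proof is correct and follows essentially the same route as the paper: the same observations establishing that $\B_0$ and $\B_1$ are weak-$*$ closed $\N$-bimodules (via $\U(\N)\G\N(\M,\D)\U(\N)=\G\N(\M,\D)$, Bures-continuity of multiplication, and the Cameron--Smith lemma), the same key step showing a normalizer $v\in\G\N(\M,\D)\cap\B_1$ lies in $\B$ by proving $E(v^*x_\alpha)\to v^*v$ weak-$*$ and then invoking Corollary~\ref{plentyNCor} together with weak-$*$ closedness of $\B$, and the same combination of Theorems~\ref{plentyN} and~\ref{BuresApx1} for the full case. The only differences are cosmetic: you obtain weak-$*$ convergence of $E(v^*x_\alpha)$ via Cauchy--Schwarz for the form $(a,b)\mapsto\rho(E(a^*b))$, where the paper instead cites Bures-continuity of $E$ and the identification of the relative Bures topology on $\N$ with the $\sigma$-strong topology, and you run the full-case argument for $x\in\B$ (then pass to $\overline{\B}^{\text{Bures}}$ by Bures-closedness of $\B_1$) rather than starting from $x\in\overline{\B}^{\text{Bures}}$, which is if anything a slightly cleaner ordering.
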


\begin{proof} Notice that if $(x_\lambda)$ is a net in $\M$ which
Bures-converges to $x\in\M$, then for any $a\in\M$ and $b\in\N$,
$\lim^{\text{Bures}} ax_\lambda b=axb$. It follows that $\B_1$ is a 
weak-$*$ closed $\N$ bimodule.  That $\B_0$ is an $\N$-bimodule
follows from the fact that $\U(\N) \G\N(\M,\D) \U(\N)=\G\N(\M,\D)$ and
that $\spn\U(\N)=\N$.  Clearly $\B_0\subseteq \B_1$.

Suppose $v\in \G\N(\M,\D)\cap \B_1$.  If $(x_\lambda)$ is a net in
$\B$ with $\lim^{\text{Bures}}x_\lambda =v$, we find
$\lim^{\text{Bures}}v^*x_\lambda =v^*v$.  As $E$ is Bures continuous,
we have that $\lim^{\text{Bures}} E(v^*x_\lambda)=E(v^*v)=v^*v$.
Since the relative Bures topology on $\N$ is the $\sigma$-strong
topology on $\N$, $E(v^*x_\lambda)$ converges weak-$*$ to $v^*v$.  By
Corollary~\ref{plentyNCor}, $vE(v^*x_\lambda)$ is a net in $\B$
converging weak-$*$ to $v$, showing that $v\in \G\N(\M,\D)\cap \B$.
Thus, $\G\N(\M,\D)\cap \B_0=\G\N(\M,\D)\cap \B= \G\N(\M,\D)\cap \B_1.$

Now suppose $(\M,\N,\D)$ is a full \ct.  Clearly $\B_0\subseteq
\B\subseteq \B_1$.  Let
$L$ be the linear span of $\G\N(\M,\D)\cap \B$ and choose $x\in
\overline{\B}^{\text{Bures}}$.  By Theorem~\ref{plentyN}, for each
$u\in\G\N(\M,\D)$, $uE(u^*x)\in L$, so Theorem~\ref{BuresApx1} shows
$x\in \B_1$, whence $\B_1=\overline{\B}^{\text{Bures}}$.
\end{proof}

\begin{remark}{Notation} For a Bures-closed $\N$-bimodule $\B\subseteq
\M$, let $\G\N(\B,\D):=\G\N(\M,\D)\cap \B$.  Define $\Theta(\B)\subseteq
\S$ by
\begin{equation*} \Theta(\B)=q(\G\N(\B,\D)).
\end{equation*} Further, define a map $\Psi$ from the collection of
spectral sets (see Definition~\ref{def: spec set})
in $\S$ to Bures-closed $\N$-bimodules in $\M$ by
\[\Psi(A)=\overline{\spn}^{\text{Bures}}q^{-1}(A)=\overline{\spn}^{\text{Bures}}\{j(a)n:
a\in A, n\in\N\},\] which is necessarily a Bures-closed $\N$-bimodule.
\end{remark}

When $(\M,\N,\D)$ is full, Theorem~\ref{plentyN} shows that
$\G\N(\B,\D)$ is non-zero whenever $\B\neq (0)$.  We now extend the
spectral theorem for bimodules in Cartan pairs~(see
\cite[Theorem~2.5.8]{CameronPittsZarikianBiCaMASAvNAlNoAlMeTh} and
\cite[Theorem~6.3]{DonsigFullerPittsVNAlExInSe}) to the context of
Bures closed bimodules in a \ct .  Theorem~\ref{spthbi} below should
also be compared with~\cite[Theorem~4.3]{FullerPittsIsLaBuClBiCaMA}.

Suppose for $i=1,2$ that $\P_i$ are full $\N_i$-Clifford inverse monoids,
$\S$ is a Cartan inverse monoid, 
$\P_i\hookrightarrow \G_i\overset{q_i}\twoheadrightarrow
\S$ are extensions of $\S$ by $\P_i$, and let $(\M_i,\N_i,\D_i)$ be
the corresponding \ct s.  Theorem~\ref{spthbi}
implies  the  striking
fact that the lattice structure of the Bures-closed $\N_i$-bimodules
in $\M_i$ is isomorphic to the lattice of spectral sets in $\S$.
Thus, $\S$  completely determines the
lattice structure of the Bures-closed $\N_i$-bimodules 
regardless of the choice of extension of $\S$.

\begin{theorem}[Spectral Theorem for Bimodules]\label{spthbi} Let
$(\M,\N,\D)$ be a full \ct .  The map $\Theta$ is a lattice
isomorphism between the family of Bures-closed $\N$-bimodules in $\M$
and the family of spectral sets in $\S$.  Moreover,
$\Theta^{-1}=\Psi$.
\end{theorem}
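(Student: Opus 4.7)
The plan is to prove that $\Theta$ and $\Psi$ are mutually inverse, monotone maps; since monotonicity is immediate, this automatically gives a lattice isomorphism. Four verifications are required: (i) $\Theta(\B)$ is a spectral set for each Bures-closed $\N$-bimodule $\B$; (ii) $\Psi(A)$ is a Bures-closed $\N$-bimodule (built into the definition); (iii) $\Psi(\Theta(\B))=\B$; and (iv) $\Theta(\Psi(A))=A$.

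Claims (i) and (iii) are direct. For (i), if $v\in\G\cap\B$ has $q(v)=s$ and $t\leq s$, then $vj(s^\dag t\wedge 1)\in\B$ (right $\N$-module structure, using $j(s^\dag t\wedge 1)\in\D\subseteq\N$) realises $t\in\Theta(\B)$; a pairwise orthogonal family $\{s_i\}\subseteq\Theta(\B)$ lifts to partial isometries $v_i\in\G\cap\B$ with pairwise orthogonal source and range projections, so the strong sum $\sum v_i$ lies in $\B$ (weak-$*$-closed) with $q$-image $\bigvee s_i$. For (iii), each $a\in\Theta(\B)$ admits a lift $v\in\G\cap\B$ factoring as $v=j(a)\sigma$ with $\sigma=j(a)^*v\in\P\subseteq\N$, so $j(a)=v\sigma^*\in\B$; this gives $\Psi(\Theta(\B))\subseteq\B$ by the right $\N$-module structure of $\B$ and Bures-closedness. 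The reverse inclusion applies Theorem~\ref{BuresApx1} to $x\in\B$: by Theorem~\ref{plentyN}, each approximant $uE(u^*x)$ is a linear combination of at most four elements of $\G\cap\B\subseteq q^{-1}(\Theta(\B))\subseteq\Psi(\Theta(\B))$, so the Bures limit $x$ lies in $\Psi(\Theta(\B))$.

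The inclusion $A\subseteq\Theta(\Psi(A))$ is immediate from $j(a)\in\G\cap\Psi(A)$. The main work is $\Theta(\Psi(A))\subseteq A$. Fix $v\in\G\cap\Psi(A)$ and set $s:=q(v)$. Form $t:=\bigvee\{a\wedge s:a\in A\}$, which lies in $A$ (after refining to a pairwise orthogonal subfamily via the Boolean structure of $\E(\S)$ and applying the pairwise-orthogonal join axiom), and let $s'$ be the orthogonal complement of $t$ within $s$, so that $s=t\vee s'$, $s'^\dag t=s't^\dag=0$, and $a\wedge s'=0$ for every $a\in A$. The goal becomes $s'=0$. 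Transport the problem to $\fA\otimes_{\pi_\psi}\H_\psi$ via Proposition~\ref{H_phi=tenpr}, using a faithful normal semi-finite weight $\psi$ on $\N$, and set $P_A:=\bigvee_{a\in A}P_{a,\pi_\psi}$ in $\B(\fA\otimes_{\pi_\psi}\H_\psi)$. Theorem~\ref{T: Grepdef} shows that each $j(a)n$ in the spanning set for $\Psi(A)$ sends the vacuum vector $k_1\otimes 1\otimes\xi$ into $\ran(P_A)$; since Bures convergence on $\M$ is equivalent to vector convergence tested against $\ran(VV^*)=\{k_1\otimes 1\otimes\xi:\xi\in\H_\psi\}$ (Lemma~\ref{Vdef}), the conclusion passes to $v$, giving $\lambda_{\pi_\psi}(v)(k_1\otimes 1\otimes\xi)\in\ran(P_A)$ for every $\xi\in\H_\psi$.

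Direct computation via Theorem~\ref{T: Grepdef} yields $\lambda_{\pi_\psi}(v)(k_1\otimes 1\otimes\xi)=k_s\otimes j(s)^*v\otimes\xi$. Proposition~\ref{Qsdef}(a) provides the decomposition $k_s=k_t+k_{s'}$; the condition $a\wedge s'=0$ forces $K(s',b)=j(s'^\dag b\wedge 1)=0$ for every $b\leq a$ with $a\in A$, so $k_{s'}\otimes j(s)^*v\otimes\xi$ is orthogonal to $\ran(P_A)$ and must therefore vanish. An inner-product computation then reduces its squared norm to $\langle\xi,\pi_\psi(j(s^\dag s'))\xi\rangle$ after using the order-preserving identities $j(s)j(s'^\dag s')j(s)^*=j(s's^\dag)$ and $v^*j(s's^\dag)v=j(s^\dag s')$. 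Since this vanishes for every $\xi\in\H_\psi$ and $\pi_\psi$ is faithful, $j(s^\dag s')=0$, hence $s^\dag s'=0$; combined with $s'\leq s$ this yields $s'=0$, and so $s=t\in A$. The principal obstacle is this final Hilbert-space computation in (iv): arranging $P_A$, justifying the Bures-to-Hilbert transfer on vacuum vectors, and executing the chain of inner-product identifications that collapses the conclusion to $s'=0$.
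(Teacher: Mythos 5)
Your proposal is correct, and its parts (i)--(iii) run essentially parallel to the paper's argument: the spectral-set axioms are verified by lifting the order relation and orthogonal joins through $q$, and $\Psi(\Theta(\B))=\B$ is deduced from Theorems~\ref{plentyN} and~\ref{BuresApx1} (the paper packages exactly this as Proposition~\ref{samegn} and then just cites it). Where you genuinely diverge is the hard inclusion $\Theta(\Psi(A))\subseteq A$. Both arguments start from the same decomposition---you write $s=t\vee s'$ with $t=\bigvee\{a\wedge s:a\in A\}\in A$, while the paper sets $r=\sup\{p\in\proj(\D):q(vp)\in A\}$ and splits $v=vr+vr^\perp$, which is the same splitting pushed through $q$---but the vanishing of the remainder is obtained by different mechanisms. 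The paper stays inside $\M$: by Lemma~\ref{Emap}, disjointness of $q(vr^\perp)$ from $A$ gives $E((vr^\perp)^*j(a)n)=0$, Bures continuity of $E$ extends this vanishing from $\spn q^{-1}(A)$ to all of $\Psi(A)$, and faithfulness of $E$ applied to $x=vr^\perp\in\Psi(A)$ forces $vr^\perp=0$. You instead transport the problem to the reproducing-kernel module $\fA\otimes_{\pi_\psi}\H_\psi$ via Proposition~\ref{H_phi=tenpr}, encode $A$ as the projection $P_A=\bigvee_{a\in A}P_{a,\pi_\psi}$, show that membership in $\Psi(A)$ forces the image of the vacuum vectors $k_1\otimes 1\otimes\xi$ into $\ran(P_A)$, and kill the remainder through the kernel orthogonality $K(s',b)=j(b^\dag s'\wedge 1)=0$ for $b\leq a\in A$ together with faithfulness of $\pi_\psi$. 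Your route costs more machinery (Theorem~\ref{T: Grepdef}, Proposition~\ref{Qsdef}, and the identification of the Bures seminorms with the vacuum-vector seminorms from the proof of Theorem~\ref{BuresApx1}), but it buys a concrete geometric picture---spectral sets become projections, and $\Psi(A)$ is precisely the set of operators mapping the vacuum space into $\ran(P_A)$---which makes visible what the conditional-expectation argument does implicitly; the paper's route is shorter and purely intrinsic to $(\M,E)$. One point both treatments must address, which you flag explicitly and the paper leaves tacit, is that the family $\{a\wedge s:a\in A\}$ must be disjointified inside the Boolean algebra of elements under $s$ before the pairwise-orthogonal join axiom of Definition~\ref{def: spec set} can be invoked to conclude $t\in A$ (respectively, $s\,q(r)\in A$).
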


\begin{proof} Let $\B$ be a Bures-closed $\N$-bimodule in $\M$ and let
$A:=\Theta(\B)$.  We will first show that $A$ is a spectral set in
$\S$.  Suppose $s\in A$ and $t\leq s$.  Then there exists an
$e\in\E(\S)$ such that $t=se$.  Write $s=q(v)$ for some $v\in
\G\N(\B,\D)$, and $e=q(p)$ for some projection $p\in\D$, we find
$t=q(vp)$, so $t\in A$.  Next, suppose that $\{s_i\}_{i\in I}$ is a
pairwise orthogonal family in $A$ and let $s=\bigvee s_i$.  For $i\neq
k$, the orthogonality of $s_i$ and $s_k$ implies that $j(s_i)$ and
$j(s_k)$ are partial isometries with orthogonal initial spaces and
orthogonal range spaces.  Therefore, the sum $\sum_{i\in I} j(s_i)$
converges strong-$*$ to an element $v\in \G\N(\M,\D)$.  As the Bures
topology is weaker than the strong-$*$ topology, $v\in \G\N(\B,\D)$.
For every $i\in I$, $q(vj(s_i^\dag s_i))= s_i$, and it follows that
$q(v)=s$.  Thus $j(s)\in \B$, and hence $s\in A$.  Therefore
$A=\Theta(\B)$ is a spectral set.

Proposition~\ref{samegn} shows that $\B$ is generated as a
$\N$-bimodule by $\B\cap \G\N(\M,\D)$.  It follows that
$\Psi(\Theta(\B))=\B$.

We now prove that $A=\Theta(\Psi(A))$.  Clearly, $A\subseteq
\Theta(\Psi(A))$.  Choose $s\in \Theta(\Psi(A))$ and let $\B:=
\Psi(A)$.  By definition, there exists $v\in \G\N(\B,\D)$ such that
$q(v)=s$.  Let
\[r=\sup\{p\in \proj(\D): q(vp)\in A\}.\] Then $q(r)$ is the maximal
idempotent in $\E(\S)$ such that $s\, q(r)\in A$.  Thus if $a\in A$,
$s\, q(r^\perp) \wedge a=0$.  Therefore, for any $n\in\N$,
\[E( (vr^\perp)^* j(a))=0=E((vr^\perp)^*j(a)n).\] Hence for any $x\in
\spn(q^{-1}(A))$, $E((vr^\perp)^*x)=0$.  As $E$ is Bures continuous,
we find that $E((vr^\perp)^*x)=0$ for every $x\in \B$.  As $vr^\perp\in
\B$ and $E$ is faithful, we obtain $vr^\perp=0$.  Hence $v=vr$.
Applying $q$ we obtain, $s=s\, q(r)\in A$, as desired.

Finally, the order preserving properties follow by the definitions of
$\Theta$ and $\Psi$.
\end{proof}

\subsection{Intermediate von Neumann algebras} Our next goal is to
give a version of Aoi's Theorem appropriate to our context.  We first
note the following technical result.

\begin{proposition}\label{modgppr} Let $\M\supseteq\N$ be an inclusion
  of von Neumann algebras and let $\D\subseteq \Z(\N)$ be a von
  Neumann subalgebra.   Assume further that there exists a faithful,
  normal conditional expectation $E:\M\rightarrow \N$.   Let $\psi$ be a faithful normal
semi-finite weight on $\N$ and let $\phi=\psi\circ E$.  Let
$\sigma_t^\phi$ be the modular automorphism group for $\phi$.  The
following statements hold.
\begin{enumerate}
\item The centralizer, $\M_\phi:=\{x\in\M: \sigma_t^\phi(x)=x \,
  \,\forall \, \, t\in\bbR\}$, for $\sigma_t^\phi$ contains $\D$.
\item If $v\in\G\N(\M,\D)$, then for every $t\in\bbR$,
$\sigma_t^\phi(v)\in\G\N(\M,\D)$.  Further $\sigma_t^\phi(v)$ is Munn
related to $v$;
\item If $\A$ is a von Neumann algebra such that $\N\subseteq\A
\subseteq \M$ and $\D$ is regular in $\A$, then there is a unique
faithful normal conditional expectation $E_\A:\M\rightarrow \A$ such
that $\phi=\phi\circ E_\A$.  In addition, $E_\A$ has the following
properties:
\begin{enumerate}
\item[(i)] $E_\A E=E E_\A=E$; and
\item[(ii)] $E_\A$ is continuous when regarded as a map of $(\M,
E\text{-Bures})$ into $(\M, E\text{-Bures})$.
\end{enumerate}
\end{enumerate}
\end{proposition}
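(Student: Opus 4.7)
The plan is to handle the three parts in order, with Takesaki's results on modular automorphism groups and weight-preserving conditional expectations~\cite{TakesakiThOpAlII} supplying the main machinery. For (a), I will verify the KMS characterization of the centralizer directly: for $d\in\D$ and $z\in\fm_\phi$, the $\N$-bimodule property of $E$ together with $d\in\Z(\N)$ gives $E(dz)=dE(z)=E(z)d=E(zd)$, hence $\phi(dz)=\phi(zd)$; the ideal memberships $dz,zd\in\fm_\phi$ follow from $\psi(E(d^*y^*yd))=\psi(d^*dE(y^*y))\leq\|d\|^2\psi(E(y^*y))$ and its symmetric counterpart, so $d\in\M_\phi$.

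For (b), since (a) puts $\D\subseteq\M_\phi$, every element of $\D$ is fixed by $\sigma_t^\phi$. For $v\in\G\N(\M,\D)$ and any projection $e\in\D$, $vev^*\in\D$ is likewise fixed, so
\[\sigma_t^\phi(v)\,e\,\sigma_t^\phi(v)^* = \sigma_t^\phi(vev^*) = vev^*,\]
showing $\sigma_t^\phi(v)\in\G\N(\M,\D)$ is Munn-related to $v$. As a byproduct, $\sigma_t^\phi(v)^*\sigma_t^\phi(v)=v^*v$ and $\sigma_t^\phi(v)\sigma_t^\phi(v)^*=vv^*$, so $u_t:=v^*\sigma_t^\phi(v)$ lies in $\G\N(\M,\D)$ with $u_t^*u_t=u_tu_t^*=v^*v\in\D$; hence $u_t\in\G\N(\N,\D)=\P$ and $\sigma_t^\phi(v)=vu_t$, a factorization that is the key input to~(c).

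For (c), I will invoke Takesaki's characterization of weight-preserving conditional expectations: a faithful normal conditional expectation $E_\A\colon\M\to\A$ with $\phi\circ E_\A=\phi$ exists, and is then automatically unique, iff $\sigma_t^\phi(\A)=\A$ for every $t\in\bbR$. The hypothesis that $E\colon\M\to\N$ exists yields $\sigma_t^\phi(\N)=\N$ by the same theorem. Regularity of $\D$ in $\A$ means $\A$ is weak-$*$-generated by $\N\cup\G\N(\A,\D)$, so it suffices to show $\sigma_t^\phi(v)\in\A$ for every $v\in\G\N(\A,\D)$; this is exactly the factorization $\sigma_t^\phi(v)=vu_t$ of~(b), with $u_t\in\P\subseteq\N\subseteq\A$.

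Properties (i) and (ii) are then essentially formal. The composition $EE_\A$ is a faithful normal conditional expectation $\M\to\N$ satisfying $\psi\circ(EE_\A)=\phi\circ E_\A=\phi=\psi\circ E$, so uniqueness of the $\psi$-preserving expectation onto $\N$ forces $EE_\A=E$; the identity $E_\A E=E$ holds since $E(x)\in\N\subseteq\A$ is fixed by $E_\A$. For (ii), the Kadison-Schwarz inequality gives $E_\A(x)^*E_\A(x)\leq E_\A(x^*x)$, whence $E(E_\A(x)^*E_\A(x))\leq E(x^*x)$ using (i), so $E_\A$ is contractive for every $E$-Bures seminorm and in particular Bures-continuous. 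The principal obstacle is the $\sigma_t^\phi$-invariance of $\A$ in~(c): knowing only $\sigma_t^\phi(v)\in\G\N(\M,\D)$ is insufficient, and the explicit presentation $\sigma_t^\phi(v)=vu_t$ with $u_t\in\P$ produced in~(b) is precisely what keeps the modular flow inside $\A$.
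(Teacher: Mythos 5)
Your route coincides with the paper's at every stage: part (a) via the $\D$-bimodule property of $\fn_\phi$ and Takesaki's characterization of the centralizer, part (b) by conjugating fixed elements of $\D$, part (c) via Takesaki's Theorem~IX.4.2 on $\phi$-preserving expectations, with (i) by uniqueness and (ii) by Kadison--Schwarz plus (i). Indeed, your factorization $\sigma_t^\phi(v)=vu_t$ is exactly the argument the paper compresses into ``the regularity of $\D$ in $\A$ and part (b) show that $\sigma_t^\phi(\A)\subseteq\A$.'' However, your justification of the crucial claim $u_t\in\P$ is invalid as stated. You infer $u_t\in\G\N(\N,\D)$ from ``$u_t\in\G\N(\M,\D)$ and $u_t^*u_t=u_tu_t^*=v^*v\in\D$.'' Membership in $\P$ requires $u_t\in\N=\D^c$, i.e.\ that $u_t$ commute with $\D$, and this does \emph{not} follow from having equal initial and final projections lying in $\D$: in the Cartan triple $(M_2(\bbC),\D,\D)$ with $\D$ the diagonal (so $\N=\D$), the flip $u=\left(\begin{smallmatrix}0&1\\1&0\end{smallmatrix}\right)$ lies in $\G\N(\M,\D)$ and satisfies $u^*u=uu^*=I\in\D$, yet $u\notin\N$. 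The gap is repairable with what you already have. Munn-relatedness of $\sigma_t^\phi(v)$ to $v$ gives, for every projection $p\in\D$,
\[u_t\,p\,u_t^*=v^*\bigl(\sigma_t^\phi(v)\,p\,\sigma_t^\phi(v)^*\bigr)v=v^*(vpv^*)v=p\,v^*v,\]
and then
\[u_tp=u_t(u_t^*u_t)p=u_tp(u_t^*u_t)=(u_tpu_t^*)u_t=p(u_tu_t^*)u_t=pu_t,\]
so $u_t$ commutes with $\proj(\D)$, hence $u_t\in\D^c=\N$ and $u_t\in\P$. (Equivalently, the first display says $u_t$ is Munn related to the idempotent $v^*v$, so Proposition~\ref{propextension}(b) applies; this is precisely how the paper's framework handles it.)

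A second, smaller omission: Takesaki's Theorem~IX.4.2 has as a hypothesis that the restriction $\phi|_\A$ be semifinite, which you never verify. The paper obtains this from Lemma~\ref{semifinite}, applied to the inclusion $\N\subseteq\A$ with the expectation $E|_\A$, so that $\phi|_\A=\psi\circ(E|_\A)$ is a faithful normal semifinite weight on $\A$; without this the citation does not apply. With these two points fixed, the remainder of your argument --- the uniqueness argument giving $EE_\A=E$, the identity $E_\A E=E$, and Bures continuity from Kadison--Schwarz together with (i) --- is correct and agrees with the paper's proof.
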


\begin{proof} For $x\in\M$ and $d\in \D$, $E(x^*d^*dx)\leq
  \norm{d}^2E(x^*x)$ and 
\[E(d^*x^*xd)=E(x^*x)^{1/2}d^*dE(x^*x)^{1/2}\leq \norm{d}^2E(x^*x).\]
Thus $\fn_\phi$ is a $\D$-bimodule.  Recalling that
\[\fm_\phi:=\left\{\sum_{j=1}^n y_j^*x_j: n\in\bbN, x_j,
y_j\in\fn_\phi\right\},\] we see that $\fm_\phi$ is also a
$\D$-bimodule. Furthermore, for any $d\in\D$ and $x\in\fm_\phi$, we
have
\[\phi(xd)=\psi(E(xd))=\psi(E(x)d)=\psi(dE(x))=\psi(E(dx))=\phi(dx).\]
An application of~\cite[Theorem~VIII.2.6]{TakesakiThOpAlII} now gives
part (a).

Now let $v\in\G\N(\M,\D)$ and let $w=\sigma_t^\phi(v)$.  Using (a) we
have
\[w^*dw = \sigma_t^\phi(v^*dv) = v^*dv.\] Thus $w \in \G\N(\M,\D)$ and
$w$ is Munn related to $v$, proving part (b).

The regularity of $\D$ in $\A$ and part (b) show that
$\sigma_t^\phi(\A)\subseteq \A$ for every $t\in\bbR$.
Lemma~\ref{semifinite} gives $\phi|_\A$ is a faithful, semi-finite
normal weight on $\A$.  By~\cite[Theorem~IX.4.2]{TakesakiThOpAlII},
there exists a unique normal conditional expectation $E_\A:
\M\rightarrow \A$ such that $\phi\circ E_\A=\phi$.  Since $\N\subseteq
\A$, $E_\A\circ E=E$.  Let $\Phi:=E\circ E_\A$.  Then $\Phi$ is a
conditional expectation of $\M$ onto $\N$ which satisfies $\phi\circ
\Phi=\phi$.  The uniqueness assertion of
\cite[Theorem~IX.4.2]{TakesakiThOpAlII} gives $\Phi=E$.  We thus have
the formula in part (c(i)).  As $E$ is faithful, so is $E_\A$.

Finally, suppose $(x_\lambda)$ is a net in $\M$ converging to $x$ in
the $E$-Bures topology.  Applying $E$ to both sides of the inequality,
\[(E_\A(x_\lambda)-E_\A(x))^*(E_\A(x_\lambda)-E_\A(x))=E_\A(x_\lambda-x)^*E_\A(x_\lambda-x)
\leq E_\A((x_\lambda-x)^*(x_\lambda-x))\] and using the fact that $E
E_\A=E$ shows that $E_\A(x_\lambda)\rightarrow E_\A(x)$ in the
$E$-Bures topology.  Thus $E_\A$ is $E$-Bures continuous.
\end{proof}

\begin{theorem}[Aoi's Theorem for Cartan Triples]\label{Aoi} Let
$(\M,\N,\D)$ be a  \ct\ and suppose $\A$ is a von Neumann algebra such
that $\N\subseteq \A\subseteq \M$.  Then $\A$ is Bures closed.
Furthermore, if $(\M,\N,\D)$ is full, then  
$(\A,\N,\D)$ is a \ct.
\end{theorem}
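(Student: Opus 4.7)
The plan is to proceed in two stages: first to show $\A$ is Bures closed (no fullness required), then under fullness to verify that $(\A,\N,\D)$ satisfies the three defining axioms of a Cartan triple. Both claims flow from a single construction carried out in the full case. Since the conditional expectation $E$ and its target $\N$ are the same for $(\M,\N,\D)$ and $(\M,\N,\Z(\N))$, and the latter is full by Remark~\ref{fullerpair}(b), the $E$-Bures topology on $\M$ is unchanged by the substitution, and it suffices to treat the full case throughout.

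Fix a faithful normal semifinite weight $\psi$ on $\N$, put $\phi := \psi \circ E$, and define
\[
\B_0 := \overline{\spn\,\G\N(\A,\D)}^{w^*}.
\]
Because $\G\N(\A,\D) = \A \cap \G\N(\M,\D)$ is a $*$-subsemigroup of the inverse semigroup $\G\N(\M,\D)$, $\B_0$ is a von Neumann algebra with $\N \subseteq \B_0 \subseteq \A$; the first inclusion follows from $\spn\P = \N$ together with $\P = \G\N(\N,\D) \subseteq \G\N(\A,\D)$. By construction $\D$ is regular in $\B_0$, so Proposition~\ref{modgppr}(c) furnishes a faithful normal conditional expectation $E_{\B_0}:\M \to \B_0$ satisfying $\phi \circ E_{\B_0} = \phi$ and continuous in the $E$-Bures topology.

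The crux is the reverse inclusion $\A \subseteq \B_0$. Fix $a \in \A$. By Theorem~\ref{plentyN} (available under fullness), for every $v \in \G\N(\M,\D)$ the element $vE(v^*a)$ is a linear combination of at most four elements of $\G\N(\M,\D) \cap \A = \G\N(\A,\D) \subseteq \B_0$. Choose a maximal $E$-orthogonal set $\Y \subseteq \G\N(\M,\D)$ with finite subsets $\Lambda$ ordered by inclusion; Theorem~\ref{BuresApx1} gives $a_F := \sum_{u \in F} uE(u^*a) \to a$ in the $E$-Bures topology, with each $a_F \in \B_0$. Since $E_{\B_0}$ is Bures continuous and restricts to the identity on $\B_0$,
\[
E_{\B_0}(a) = \lim_F E_{\B_0}(a_F) = \lim_F a_F = a,
\]
so $a \in \B_0$. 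Thus $\A = \B_0$; in particular $\D$ is regular in $\A$, and $E_{\B_0}$ is a Bures-continuous faithful normal conditional expectation from $\M$ onto $\A$. Bures closedness of $\A$ is then immediate: for $x \in \overline{\A}^{\mathrm{Bures}}$ with $\A \ni a_\lambda \to x$ in Bures, Bures continuity of $E_{\B_0}$, the identity $E_{\B_0}|_\A = \id$, and Hausdorffness of the Bures topology yield $x = \lim E_{\B_0}(a_\lambda) = E_{\B_0}(x) \in \A$.

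Under fullness, the previous step has already proven $\D$ regular in $\A$; the remaining axioms for $(\A,\N,\D)$ to be a Cartan triple are then routine since $\N \subseteq \A$: the relative commutant of $\D$ in $\A$ equals $\A \cap \N = \N$ (as $\N = \D^c$ in $\M$), and $E|_\A:\A\to\N$ is a faithful normal conditional expectation (since $E|_\N = \id$). The principal obstacle is the closing-the-loop step $\A \subseteq \B_0$, which hinges on the simultaneous confluence of three ingredients: (i) $\spn\,\G\N(\A,\D)$ being a $*$-algebra, so that $\B_0$ is a von Neumann algebra in which $\D$ is regular, enabling Proposition~\ref{modgppr}(c); (ii) the full hypothesis, through which Theorem~\ref{plentyN} forces each summand $uE(u^*a)$ into $\B_0$; and (iii) Bures continuity of $E_{\B_0}$, without which the Bures approximation of Theorem~\ref{BuresApx1} could not be pushed through $E_{\B_0}$ to recover $a$ itself.
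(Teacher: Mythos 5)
Your proof is correct and takes essentially the same route as the paper: both construct the weak-$*$ closed span of $\G\N(\A,\Z(\N))$, invoke Proposition~\ref{modgppr}(c) to get an $E$-Bures continuous faithful normal conditional expectation onto it, and then identify this algebra with $\A$ using Theorem~\ref{plentyN} and Theorem~\ref{BuresApx1} under fullness. The only differences are presentational: the paper cites Proposition~\ref{samegn} (whose full-case proof is exactly your plentyN-plus-BuresApx1 argument) rather than inlining it, and works with $\G\N(\A,\Z(\N))$ directly where you make the reduction to the full triple $(\M,\N,\Z(\N))$ explicit.
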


\begin{proof} 
Let $\A_0$ be the weak-$*$ closure of $\spn\G\N(\A,\Z(\N))$.  Then
$\A_0$ is a von Neumann algebra and, as $\U(\N)\subseteq
\G\N(\A,\Z(\N))$, $\A_0\supseteq \N$.  Thus, $\A_0$ is a weak-$*$
closed $\N$-bimodule.  

 By Proposition~\ref{modgppr},
there exists a faithful, normal conditional expectation
$E_{\A_0}:\M\rightarrow \A_0$.   
Since
$E_{\A_0}$ is $E$-Bures continuous, it follows that $\A_0$ is $E$-Bures
closed.  By Proposition~\ref{samegn},
\[\A_0\subseteq \A\subseteq \overline{\A}^{Bures}=
  \overline{\spn}^{Bures}\G\N(\A,\Z(\N))=\overline{\A_0}^{Bures}=\A_0,\] so $\A$ is Bures
closed.

When $(\M,\N,\D)$ is full, that is, $\D=\Z(\N)$, the previous
paragraph shows that $\D$ is regular in $\A$, so $(\A,\N,\D)$ is a
\ct.
\end{proof}

\begin{remark}{Remark}  With the notation of Theorem~\ref{Aoi} and its
  proof, let $\A_{00}$ be the weak-$*$ closure of
  $\spn\G\N(\A,\D)$.  Then $\N\subseteq \A_{00}\subseteq \A_0$, and
  $\A_{00}$ is Bures closed.  However,  we have been unable to show 
$\A_{00}=\A_0$ in general, which is why  we required the fullness
hypothesis to conclude $(\A,\N,\D)$ is a \ct.    However, this
hypothesis is rather mild, and is satisfied when $\D$ is a Cartan
MASA in $\M$.  Thus Theorem~\ref{Aoi} is indeed a generalization of
Aoi's theorem for Cartan pairs.
\end{remark}

As an immediate corollary, we use the Spectral Theorem for Bimodules
to parametrize the intermediate von Neumann algebras for a full \ct .
As with Bures-closed bimodules, this parametrization depends only on
the Cartan inverse monoid and not the extension.

\begin{corollary}\label{cor: galois} Let $(\M,\N,\D)$ be a full \ct .
Set
\begin{align*}\text{vN}(\M,\N,\D)&:=\{\A: \A \text{ is a von Neumann
algebras such that }\N\subseteq \A\subseteq \M\}\text{ and}\\
\text{sub}(\S)&:=\{\T\subseteq \S: \text{$\T$ is a Cartan inverse
submonoid of $\S$ with $\E(\T)=\E(\S)$}\}.
\end{align*} Then the restriction of $\Theta$ to $\text{vN}(\M,\N,\D)$
gives a bijection between $\text{vN}(\M,\N,\D)$ and $\text{sub}(\S)$.
\end{corollary}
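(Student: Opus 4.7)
The plan is to combine Aoi's Theorem (Theorem~\ref{Aoi}) with the Spectral Theorem for Bimodules (Theorem~\ref{spthbi}). I will show that $\Theta$, restricted to $\text{vN}(\M,\N,\D)$, has image in $\text{sub}(\S)$, is injective via the Spectral Theorem applied to Bures-closed bimodules, and is surjective by constructing an inverse from $\Psi$.

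For the image, fix $\A\in\text{vN}(\M,\N,\D)$. Aoi's Theorem gives that $(\A,\N,\D)$ is itself a full \ct, with associated extension $\P\hookrightarrow\G\N(\A,\D)\twoheadrightarrow\S_\A$, where $\S_\A$ is a Cartan inverse monoid by Proposition~\ref{propextension}(c). The critical point is that $\E(\G\N(\A,\D)) = \text{proj}(\D) = \E(\G)$, so the Munn relation on $\G\N(\A,\D)$ is precisely the restriction of the Munn relation on $\G$. Consequently $q|_{\G\N(\A,\D)}$ descends to an isomorphism $\S_\A \cong q(\G\N(\A,\D)) = \Theta(\A)$, and $\E(\Theta(\A)) = q(\E(\G)) = \E(\S)$. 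Thus $\Theta(\A)\in\text{sub}(\S)$. Injectivity is immediate: each $\A\in\text{vN}(\M,\N,\D)$ is a Bures-closed $\N$-bimodule by Aoi's Theorem, and Theorem~\ref{spthbi} distinguishes such bimodules by their $\Theta$-images.

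For surjectivity, let $\T\in\text{sub}(\S)$ and set $\A := (\spn q^{-1}(\T))''$. First, $\T$ is a spectral set: if $t\leq s\in\T$, then $t = s(t^\dag t)$ with $t^\dag t \in \E(\S) = \E(\T) \subseteq \T$, so $t\in\T$ by multiplicative closure; closure of $\T$ under orthogonal joins is inherited from the complete Boolean structure of $\T$ as a Cartan inverse submonoid of $\S$. Next, since $\T$ is an inverse subsemigroup of $\S$, $q^{-1}(\T)$ is closed under products and adjoints, so $\spn q^{-1}(\T)$ is a unital $*$-subalgebra of $\M$ whose weak-$*$ closure $\A$ is a von Neumann algebra. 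By Proposition~\ref{propextension}(b), $\P = q^{-1}(\E(\S))\subseteq q^{-1}(\T)$, and since every element of $\N$ is a linear combination of (at most four) elements of $\P$, $\N\subseteq\A$; hence $\A\in\text{vN}(\M,\N,\D)$.

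It remains to verify $\Theta(\A) = \T$. By Aoi's Theorem $\A$ is Bures-closed, and a sandwich argument gives $\A = \Psi(\T)$: the inclusion $\A\subseteq\Psi(\T)$ uses that weak-$*$ closures of convex sets lie in Bures closures, while $\Psi(\T)\subseteq\A$ follows since $\A$ is Bures-closed and contains $\spn q^{-1}(\T)$. Theorem~\ref{spthbi} then yields $\Theta(\A) = \Theta(\Psi(\T)) = \T$, completing the proof. The argument is essentially a packaging exercise; the only subtle step is identifying $\Theta(\A)$ with the Munn quotient of $\G\N(\A,\D)$, which turns on the equality of idempotent semilattices $\E(\G\N(\A,\D)) = \E(\G)$.
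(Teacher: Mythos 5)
Your proposal is correct and takes exactly the route the paper intends: the paper offers no separate proof, presenting Corollary~\ref{cor: galois} as an immediate consequence of Theorem~\ref{Aoi} (intermediate algebras are Bures-closed and, in the full case, give Cartan triples) combined with Theorem~\ref{spthbi} (the mutually inverse maps $\Theta$ and $\Psi$ between Bures-closed $\N$-bimodules and spectral sets), which is precisely how you argue. The details you supply beyond this---identifying $\Theta(\A)$ with the Munn quotient of $\G\N(\A,\D)$ using $\E(\G\N(\A,\D))=\E(\G)$, and checking that each $\T\in\mathrm{sub}(\S)$ is a spectral set so that $\Psi(\T)$ is defined and equals $(\spn q^{-1}(\T))''$---are the verifications the paper leaves implicit.
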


\section{Examples}\label{sec: examples}

In this section, we give several examples of \ct s.

\subsection{Type I examples}\label{sec: examples type I} Suppose a
Hilbert space $\H$ is decomposed as as a direct sum,
$\H=\bigoplus_{i\in I}H_i$,  where  for all $i,j \in I$,
$\dim\H_i=\dim \H_j$.  Let
$\M = \B(\H)$ and $\D$ be the von Neumann algebra generated by $\{ P_i
: P_i \hbox{ is the projection onto } \H_i , i \in I \}$.  Then $\N =
\D' = \oplus_{i \in I} \B(\H_i)$ and $(\M,\N,\D)$ is a full \ct .
Indeed,
	\begin{equation}\label{eq: type I} \M \cong \B(\ell^2(I))
\overline{\otimes} \B(\H_1), \D \cong D(\ell^2(I)) \overline{\otimes}
\bbC I_{\H_1},\ \text{and } \N \cong D(\ell^2(I)) \overline{\otimes}
\B(\H_1),
  \end{equation} where $D(\ell^2(I))$ are the diagonal operators in
$\B(\ell^2(I))$.
  
We now show every Cartan triple $(\M,\N,\D)$ with $\M=\B(\H)$ has the form
outlined above, and hence is necessarily full.  Showing that 
$\D$ is atomic is the key step.  To start, let $P$ be the
projection onto the closure of the span of the ranges of the minimal 
projections in $\D$.  We argue by contradiction to show $P=I$.  If $P \ne I$, fix a unit vector
$\eta \in P^\perp \H$ and a positive integer $n$.  Choose a maximal
chain $\fP$ in $\proj(P^\perp \D)$ (with respect to the ordering
$\leq$ in $\proj(P^\perp\D)$).  The map from $\fP$ into $[0,1]$
given by $\fP\ni R\mapsto \norm{R\eta}$
is onto
$[0,1]$ since $\fP$ has no atoms.  So for $0\leq j\leq n$, let $R_j\in\fP$
be such that $\norm{R_j\eta}=j/n$ and for $1\leq j\leq n$ put
$Q_j:=R_j-R_{j-1}$.  Thus $\norm{Q_j\eta}=1/n$ for every $j$.

If $X$
is the rank-one projection onto the span of $\eta$, then
\[ E(X) = E\left(\sum_{i=1}^n Q_i X \right) = \sum_{i=1}^n Q_i E(X) = E\left( \sum_{i=1}^n Q_i X Q_i
\right). \] As $\|Q_i X Q_i \| = 1/n$ for each $i$, $\|E(X)\| \le 1/n$ for
all choices of $n$ and so $E(X)=0$, contradicting faithfulness of $E$.
 Thus $P^\perp = 0$ and $\D$ is atomic.

Finally, since $\G\N(\M,\D)$ spans $\M$, any two atoms of $\D$, say
$A$ and $B$, must have the same dimension, since otherwise $A \M B
\cap \G\N(\M,\D) = \{0\}$, contradicting the regularity of $\D$ in
$\M$.

By the previous paragraph, for every non-zero minimal projection $P\in\Z(\M)$, $(\M P,\N P, \D P)$ is a full \ct.
As a consequence, we have the following observation.

\begin{proposition}
If $(\M,\N,\D)$ is a \ct\ with $\dim(\M)<\infty$, then $(\M,\N,\D)$ is full.
\end{proposition}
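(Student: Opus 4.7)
The plan is to apply the observation immediately preceding this proposition to each minimal central summand of $\M$ and then to reassemble. Since $\dim\M<\infty$, the center $\Z(\M)$ is a finite-dimensional commutative algebra, so $\Z(\M)=\bigoplus_{k=1}^{m}\bbC P_{k}$ for pairwise orthogonal minimal central projections $P_{1},\dots,P_{m}$ with $\sum_{k}P_{k}=I$, and $\M$ decomposes as the finite direct sum $\M=\bigoplus_{k}\M P_{k}$ with each $\M P_{k}$ a type~I factor (a full matrix algebra).

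For each $k$ I would first verify that $(\M P_{k},\N P_{k},\D P_{k})$ is itself a Cartan triple: $\D P_{k}$ is abelian in $\M P_{k}$; regularity is inherited by compressing each element of $\G\N(\M,\D)$ by the central projection $P_{k}$; the relative commutant $(\D P_{k})^{c}$ taken inside $\M P_{k}$ equals $\N P_{k}$ (since centrality of $P_{k}$ lifts commutation statements from $\M P_{k}$ back to $\M$); and $E$ restricts to a faithful normal conditional expectation $\M P_{k}\to\N P_{k}$ because $P_{k}\in\N$ and $E$ is an $\N$-bimodule map.

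The observation preceding the proposition then forces each restricted triple to be full, yielding $\D P_{k}=\Z(\N P_{k})$. Since $P_{k}\in\Z(\M)\subseteq\Z(\N)$, a short direct computation gives $\Z(\N P_{k})=\Z(\N)P_{k}$, so $\D P_{k}=\Z(\N)P_{k}$ for every $k$. Summing over $k$ and using $\sum_{k}P_{k}=I$,
\[
\sum_{k=1}^{m}\D P_{k}\;=\;\sum_{k=1}^{m}\Z(\N)P_{k}\;=\;\Z(\N).
\]

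The main obstacle is to pass from this sum-of-corners identity to the equality $\D=\Z(\N)$ itself, which amounts to checking that each minimal central projection $P_{k}$ of $\M$ actually belongs to $\D$; once this is in hand, $\D P_{k}\subseteq\D$ for every $k$, the left-hand side above collapses to $\D$, and combining with the automatic inclusion $\D\subseteq\Z(\N)$ completes the proof. I plan to extract $P_{k}\in\D$ by combining the identity $P_{k}=d_{k}P_{k}$ for some $d_{k}\in\D$ (which follows because $P_{k}$ is the unit of $\M P_{k}$ and $P_{k}\in\D P_{k}=\Z(\N)P_{k}$) with the regularity of $\D$ in $\M$, using the finite-dimensional central decomposition to rule out the possibility that elements of $\D$ mix nontrivially across distinct central summands of $\M$.
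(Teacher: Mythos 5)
Your plan follows the same route as the paper's (very terse) proof: decompose $\M=\bigoplus_{k}\M P_{k}$ over the minimal projections of $\Z(\M)$, check that each compression $(\M P_{k},\N P_{k},\D P_{k})$ is again a \ct\ (your verification is correct, using $P_{k}\in\Z(\M)\subseteq\D^{c}=\N$), and apply the observation preceding the proposition to get $\D P_{k}=\Z(\N P_{k})=\Z(\N)P_{k}$. The paper's entire derivation is the phrase ``as a consequence''; you have gone further and correctly isolated the one point on which everything turns, namely whether the $P_{k}$ belong to $\D$, equivalently whether $\Z(\M)\subseteq\D$.

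That final step, however, is not merely unproven: it is false, so neither your argument nor the paper's can be completed as stated. Take $\M=\N=\bbC\oplus\bbC$ and $\D=\bbC I$; this is the paper's own Example~\ref{notfull} in its smallest instance. Here $\G\N(\M,\D)=\U(\M)\cup\{0\}$, whose span is all of $\M$, so $\D$ is regular; $\D^{c}=\M=\N$; and $E=\id$ is a faithful normal expectation. Thus $(\M,\N,\D)$ is a \ct\ with $\dim\M=2$, and both central compressions are the trivially full triple $(\bbC,\bbC,\bbC)$, yet $\D=\bbC I\neq\bbC\oplus\bbC=\Z(\N)$. In particular the minimal central projections of $\M$ do not lie in $\D$, and elements of $\D$ do ``mix across central summands'' even though $\D$ is regular --- regularity simply does not see this obstruction, and the identity $P_{k}=d_{k}P_{k}$ you propose to exploit is satisfied with $d_{k}=I$, yielding nothing. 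So the gap you flagged is genuine and unfixable: the proposition as stated actually conflicts with Example~\ref{notfull}, i.e.\ it is an error in the paper, not a defect peculiar to your approach. What is true, and what your argument proves verbatim, is the proposition under the additional hypothesis $\Z(\M)\subseteq\D$ (automatic when $\M$ is a factor, the setting of the $\B(\H)$ analysis that precedes it): then each $P_{k}\in\D$, and $\D=\bigoplus_{k}\D P_{k}=\bigoplus_{k}\Z(\N)P_{k}=\Z(\N)$.
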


\subsection{Tensoring Cartan pairs} Equations \eqref{eq: type I}
decomposed a Cartan triple into tensor products, where $\M \cong
\B(\ell^2(I)) \overline{\otimes} \B(\H_1)$ and $\N = D(\ell^2(I))
\overline{\otimes} \B(\H_1).$ Note that $D(\ell^2(I))$ is a Cartan
subalgebra of $\B(\ell^2(I))$.  In fact, starting with any Cartan pair
we can create a Cartan triple by tensoring with a von Neumann algebra.

Suppose $\M$ is a von Neumann algebra, $\D\subseteq \M$ is a Cartan
MASA and let $\N$ be any von
Neumann algebra.  Consider the von Neumann algebras
\[ \D \otimes I_{\N} \subseteq \D \overline{\otimes} \N \subseteq \M \overline{\otimes} \N.\]
Since $\D$ is regular in $\M$ it follows that $\D \otimes I_{\N}$ is
regular in $\M \vntensor \N$.  Further, the conditional expectation $E
\colon \M \rightarrow \D$ induces a faithful conditional expectation $E\overline{\otimes}
\id_{\N} \colon \M \overline{\otimes} \N \rightarrow \D
\overline{\otimes} \N$.  By \cite[Theorem~IV.5.9 and Corollary~IV.5.10]{TakesakiThOpAlI}, $\D\vntensor \N=(\D\vntensor I_\N)^c$.   Thus $(\M \overline{\otimes} \N, \D
\overline{\otimes} \N, \D \otimes I_{\N})$ is a \ct.  Further, if $\N$
is a factor $(\M \overline{\otimes} \N, \D \overline{\otimes} \N, \D
\otimes I_{\N})$ is a full \ct.

\subsection{Crossed products by discrete groups}
\ct s arise naturally as crossed product von Neumann algebras.  In
Section~\ref{sec: abelian crossed} we will show that if $G$ is a
discrete group acting on an abelian von Neumann algebra $\D$ then $(\D
\rtimes_\alpha G, \D^c, \D)$ will always give a \ct.  If a discrete
group $G$ acts on a (not necessarily abelian) von Neumann algebra
$\N$, and $\D=\Z(\N)$, we give necessary and sufficient conditions for
$(\N\rtimes_\alpha G,\N,\D)$ to be a \ct\ in Section~\ref{sec: crossed
non-abelian}.

Let $G$ be a discrete group acting on a von Neumann algebra $\N$ by
automorphisms $\alpha$.  Let $\M= \N \rtimes_\alpha G$.  The von
Neumann algebra $\M$ is generated by a copy of $\N$ and a unitary
representation of $G$, $\{u_g\}_{g \in G}$ such that $\alpha_g(d) =
u_gdu_g^*$.  There is a faithful, normal conditional expectation
$E_{\N}$ from $\M$ onto $\N$.  Each element $x \in \M$ is uniquely
determined by a Fourier series
\[ x = \sum_{g\in G} x_g u_g, \dstext{where} x_g:=E_\N(xu_g^*) \in \N. \] This series converges
in the Bures-topology on $\M$ induced by $E_{\N}$
\cite{MercerCoFoDiCrPrVNAl}.

Cameron and Smith \cite{CameronSmithBiCrPrVNAl,
CameronSmithInSuBiGeCrPrVNAl} have studied Bures-closed bimodules and
intermediate von Neumann algebras in a large class of crossed
products.  We will see in Theorem~\ref{thm: crossed prod} that there
is overlap in our work and theirs.  However, neither work subsumes the
other.

\subsubsection{Crossed products of abelian algebras}\label{sec:
abelian crossed}
\begin{theorem}\label{thm: crossed prod is ct} Let $\D$ be an abelian
von Neumann algebra and let $G$ be a discrete group acting on $\D$ by
automorphisms $\alpha$.  Let $\M:=\D\rtimes_\alpha G$ and $\N = \D^c$.  Then $(\M,
\N, \D)$ is a \ct.
\end{theorem}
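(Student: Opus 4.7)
The first two axioms of Definition~\ref{D: ct} are immediate: $\D$ is abelian by hypothesis, and $\N=\D^c$ by the definition of $\N$. For regularity, $u_g\D u_g^* = \alpha_g(\D) = \D$ shows $u_g\in\G\N(\M,\D)$ for each $g\in G$; combined with $\U(\D)\subseteq\G\N(\M,\D)$ and the fact that $\G\N(\M,\D)$ is closed under products, this gives that $\spn\G\N(\M,\D)$ contains all finite linear combinations of the elements $du_g$ with $d\in\D$ and $g\in G$, a set which is weak-$*$ dense in $\M$.

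The substantive step is to produce a faithful normal conditional expectation $E\colon\M\to\N$. My plan is to invoke Takesaki's theorem on existence of weight-preserving conditional expectations. Let $\psi$ be any faithful normal semi-finite weight on $\D$; since $\D$ is abelian, $\psi$ is automatically tracial and $\sigma_t^\psi=\id_\D$. Let $E_0\colon\M\to\D$ be the canonical ``zeroth Fourier coefficient'' conditional expectation of the crossed product, and set $\phi:=\psi\circ E_0$. Lemma~\ref{semifinite} guarantees $\phi$ is a faithful normal semi-finite weight on $\M$. Since $\phi\circ E_0=\phi$, Takesaki's theorem applied to $E_0$ yields both $\sigma_t^\phi(\D)=\D$ and $\sigma_t^\phi|_\D=\sigma_t^\psi=\id_\D$. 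Thus $\sigma_t^\phi$ is an automorphism of $\M$ preserving $\D$ (pointwise, in fact), and therefore preserves $\N=\D^c$.

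To complete the application of Takesaki's theorem in the existence direction I verify that $\phi|_\N$ is semi-finite. Using $\D\subseteq\Z(\N)$: for $n\in\N^+$ with $n=m^2$, $m\in\N^+$, and any $d\in\D^+$ with $\psi(d^2)<\infty$,
\[
\phi((dm)^*(dm)) = \psi(E_0(md^2m)) = \psi(d^2 E_0(m^2)) \le \|E_0(n)\|\,\psi(d^2) < \infty,
\]
so $dm\in\fn_{\phi|_\N}$ and $d^2 n=(dm)^*(dm)\in\fp_{\phi|_\N}^+$. Choosing a net $d_\lambda$ in such elements with $d_\lambda^2\uparrow 1$ gives $d_\lambda^2 n\uparrow n$, establishing semi-finiteness. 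Takesaki's existence theorem then provides the required faithful normal conditional expectation $E\colon\M\to\N$ with $\phi\circ E=\phi$, completing the verification that $(\M,\N,\D)$ is a \ct. The main delicacy is justifying $\sigma_t^\phi(\D)=\D$ without circularity, since Proposition~\ref{modgppr} presupposes the \ct\ structure; this is resolved cleanly by applying Takesaki's theorem to the pre-existing expectation $E_0$ before the desired expectation $E$ is even known to exist.
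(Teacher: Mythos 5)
Your proof is correct, and it runs on the same engine as the paper's: the weight $\phi=\psi\circ E_\D$ (your $\psi\circ E_0$) together with Takesaki's theorem \cite[Theorem~IX.4.2]{TakesakiThOpAlII} producing a $\phi$-preserving faithful normal expectation onto a $\sigma^\phi$-invariant subalgebra on which $\phi$ is semi-finite. The difference is that the paper disposes of all of this in one line by citing Proposition~\ref{modgppr}(c), applied to the inclusion $\D\subseteq\M$ with expectation $E_\D$ and intermediate algebra $\A=\N$, whereas you rebuild the needed special case from scratch, with three genuinely different intermediate steps: (1) you get $\sigma_t^\phi|_\D=\id_\D$ from the converse direction of Takesaki's theorem applied to $E_0$, while Proposition~\ref{modgppr}(a) gets it from the centralizer criterion \cite[Theorem~VIII.2.6]{TakesakiThOpAlII}; (2) you get $\sigma_t^\phi(\N)=\N$ directly from the commutant identity $\N=\D^c$, while the paper's route goes through invariance of $\G\N(\M,\D)$ and regularity of $\D$ in the intermediate algebra --- your argument is cleaner, but it works only because the target algebra is exactly $\D^c$, whereas Proposition~\ref{modgppr}(c) handles any intermediate algebra in which $\D$ is regular, generality the paper reuses for Theorem~\ref{Aoi}; (3) you check semi-finiteness of $\phi|_\N$ by hand, where one can simply cite Lemma~\ref{semifinite} for the inclusion $\D\subseteq\N$ with the expectation $E_0|_\N\colon\N\to\D$. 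One correction: your closing worry about circularity is misplaced. Proposition~\ref{modgppr} does not presuppose a \ct; its hypotheses are only an inclusion of von Neumann algebras with a faithful normal conditional expectation and $\D\subseteq\Z(\N)$, with no regularity of $\D$ in the ambient algebra assumed, so the paper's direct appeal to it (with the roles $\M\supseteq\D$, $E=E_\D$, $\A=\N$) involves no circularity.
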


\begin{proof} Since $\D$ is clearly regular in $\D \rtimes_\alpha G$,
we only need to note that there is a faithful normal conditional
expectation from $\M$ onto $\N$.
Since there is a faithful, normal conditional expectation $E_\D$ from
$\M$ onto $\D$, $\D$ is regular in $\N$, and $\D\subseteq \N\subseteq \M$, this follows from
Proposition~\ref{modgppr}(c).
Alternatively, the existence of the conditional expectation onto $\N$ also
follows from the proof of Theorem~3.2 of
\cite{CameronSmithInSuBiGeCrPrVNAl}.  In
\cite{CameronSmithInSuBiGeCrPrVNAl} it is assumed that the action of
$G$ is by properly outer automorphisms, though this is not needed in
the proof.
\end{proof}

We give further details on the structure of this \ct.  For each
$g \in G$, let $p_g$ be the largest projection in $\D$ such that
$\alpha_g|_{\D p_g}$ is the identity.  We note $p_g$ is the Frol\'ik projection
$e_0$ for $\ad_{u_g}$ on $\D$ described in the proofs of
Lemma~\ref{Emap} and Proposition~\ref{Dqc=Nq}.
By~\cite[Lemma~2.15]{PittsStReInI}, $u_gp_g=p_gu_g\in\N$ and $E_\N(u_g)=u_gp_g$.

Since $E_\N$ is Bures continuous,
 we can explicitly describe $E_{\N}$ by
\[ E_{\N}\left(\sum_{g\in G}x_g u_g\right) = \sum_{g\in G} x_gp_g u_g. \]

\subsubsection{Crossed products of non-abelian algebras}\label{sec:
crossed non-abelian} An automorphism $\alpha$ on a von Neumann algebra
$\N$ is \emph{properly outer} if there are no nonzero central
projections $z \in \Z(\N)$ such that $\alpha|_{\N z}$ is inner.
Equivalently $\alpha$ is properly outer if and only if
$$ yx = x\alpha(y) $$
for all $y \in \N$ implies that $x=0$.  In
\cite{CameronSmithInSuBiGeCrPrVNAl} crossed products by properly outer
automorphisms are studied and the Bures-closed bimodules and
intermediate von Neumann algebras are characterized.  We show now that
the crossed products studied in \cite{CameronSmithInSuBiGeCrPrVNAl}
give rise to full \ct s under the assumption that the restriction of
the action to the center $\Z(\N)$ is also properly outer.

\begin{theorem}\label{thm: crossed prod} Let $\N$ be a von Neumann
algebra and let $G$ be a discrete group acting on $\N$ by properly
outer automorphisms $\alpha$.  Let $\M = \N \rtimes_\alpha G$.  Then
$(\M, \N, \Z(\N))$ is a Cartan triple if and only if the action of $G$
restricted to the center $\Z(\N)$ is properly outer.
\end{theorem}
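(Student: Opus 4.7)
The plan is to check the three clauses of Definition~\ref{D: ct} for $(\M,\N,\Z(\N))$ and to observe that two of them hold without any extra hypothesis on $\alpha$, so the whole biconditional reduces to the relative-commutant clause $\Z(\N)^c \cap \M = \N$. Indeed, $\Z(\N)$ is abelian and is regular in $\M$ because $\U(\N) \cup \{u_g : g \in G\} \subseteq \G\N(\M,\Z(\N))$ (the $u_g$ normalise $\Z(\N)$ since $\alpha_g(\Z(\N)) = \Z(\N)$) and $\spn\{n u_g : n\in\N,\ g\in G\}$ is weak-$*$ dense in $\M$. The canonical Fourier-coefficient map $E_\N\colon \sum_g x_g u_g \mapsto x_e$ is a faithful normal conditional expectation and supplies clause~(3).

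For ``proper outer on $\Z(\N)$ implies $\Z(\N)^c \cap \M = \N$'', take $x \in \Z(\N)^c \cap \M$ and set $x_g := E_\N(x u_g^*) \in \N$. The module property of $E_\N$ together with $xz = zx$ and $z u_g^* = u_g^* \alpha_g(z)$ for $z \in \Z(\N)$ give
\[ z x_g = E_\N(zxu_g^*) = E_\N(xzu_g^*) = E_\N(x u_g^* \alpha_g(z)) = x_g \alpha_g(z), \]
so $x_g(z - \alpha_g(z)) = 0$ for every $z \in \Z(\N)$. Fix $g \ne e$ and consider the weak-$*$-closed ideal $J_g := \{y \in \Z(\N) : y x_g = 0\}$ of $\Z(\N)$, which has the form $J_g = (1 - f_g)\Z(\N)$ for a projection $f_g \in \Z(\N)$ with $f_g x_g = x_g$. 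Then $f_g z = f_g \alpha_g(z)$ for every $z \in \Z(\N)$; specialising to $z = f_g$ and to $z = \alpha_g^{-1}(f_g)$ yields $f_g \le \alpha_g(f_g)$ and $\alpha_g(f_g) \le f_g$ respectively, hence $\alpha_g(f_g) = f_g$ and $\alpha_g$ restricts to the identity on $\Z(\N) f_g$. Proper outerness of $\alpha_g|_{\Z(\N)}$ forces $f_g = 0$, so $x_g = 0$. By Mercer's theorem~\cite{MercerCoFoDiCrPrVNAl} the Fourier series $\sum_g x_g u_g$ converges to $x$ in the $E_\N$-Bures topology, and therefore $x = x_e \in \N$.

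For the converse, suppose $(\M,\N,\Z(\N))$ is a Cartan triple but some $\alpha_g|_{\Z(\N)}$ with $g \ne e$ fails to be properly outer, and pick a non-zero projection $p \in \Z(\N)$ with $\alpha_g|_{\Z(\N) p} = \id$. Then $\alpha_g(p) = p$ and $\alpha_g(z)p = \alpha_g(zp) = zp$ for every $z \in \Z(\N)$, so $p u_g$ is a non-zero partial isometry satisfying
\[ (p u_g) z = p \alpha_g(z) u_g = z p u_g = z(p u_g), \]
which shows $p u_g \in \Z(\N)^c \cap \M$. But $E_\N(p u_g) = p\, E_\N(u_g) = 0 \ne p u_g$, so $p u_g \notin \N$, contradicting the assumed equality $\Z(\N)^c \cap \M = \N$.

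The main obstacle is the first step of the implication ``proper outer on $\Z(\N)$ implies Cartan triple'': passing from the scalar relation $x_g(z - \alpha_g(z)) = 0$ for a possibly non-central element $x_g \in \N$ to an $\alpha_g$-fixed \emph{central} projection that witnesses a non-trivial fixed ideal of the restricted action. The resolution is to run the annihilator construction entirely inside $\Z(\N)$, so that $f_g$ sits in $\Z(\N)$ and the hypothesis on $\alpha_g|_{\Z(\N)}$ (rather than on $\alpha_g|_\N$) applies directly. Everything else is routine: Mercer's Bures inversion recovers $x$ from its Fourier coefficients, and the converse is a direct computation with the fixed projection $p$.
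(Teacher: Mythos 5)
Your proof is correct and follows essentially the same route as the paper: both derive the commutation relation $z x_g = x_g\alpha_g(z)$ on the Fourier coefficients and then pass to the central support projection of $x_g$ (the paper via the weak-$*$ closed two-sided ideal of $\N$ generated by $x_g$, you via its annihilator in $\Z(\N)$ --- these yield the same projection), show it is $\alpha_g$-fixed with trivial restricted action, and invoke proper outerness on the center together with Bures convergence of the Fourier series. Your explicit treatment of the converse direction (the element $pu_g \in \Z(\N)^c\setminus\N$) and of the regularity and expectation clauses merely fills in steps the paper leaves implicit.
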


\begin{proof} Suppose $x \in \Z(\N)' \cap \M$.  Let $x= \sum_{g\in
G}x_g u_g$ be the (Bures convergent) Fourier series for $x$.  Since $x\in \Z(\N)'$ it
follows that if $x_g \neq 0$ then for $d\in\Z(\N)$,
\begin{equation}\label{cpnaa1}
dx_g=E_\N(dxu_g^*)=E_\N(xu_g^*(u_gdu_g^*))=x_g\alpha_g(d).
\end{equation}

Let $J_g$ be the two-sided ideal in $\N$ generated by
$x_g$.  It follows from~\eqref{cpnaa1} that $xd=x\alpha(d)$ for all $x\in J_g$ and all $d\in
\Z(\N)$.  Since $J_g$ is a two-sided ideal, there is a central
projection $z_g\in \Z(\N)$ such that $J_g=\N z_g$.  Thus $z_gd=z_g\alpha_g(d)$
for all $d\in \Z(\N)$.  That is, $\alpha_g|_{\Z(\N)z_g}
=\mathrm{id}|_{\Z(\N)z_g}.$

It follows that  $\N=\Z(\N)^c$ if and only if for all $g\neq e$,
$\alpha_g|_{\Z(\N)}$ is properly outer.
\end{proof}

\subsection{Crossed products by equivalence relations} Igor Fulman
in~\cite{FulmanCrPrvNAlEqReThSu} studied a class of Cartan triples
which he called crossed products by an equivalence relation.  A
crossed product by an equivalence relation is a Cartan triple
satisfying the condition in Definition~\ref{defRegularizer} below,
which we also call \textit{Fulman's condition}.  
In Appendix~A we
provide a conceptual framework in terms of inverse semigroups for
Fulman's condition and show that Fulman's condition amounts to a
lifting problem.  Here we give a class of \ct s which satisfy Fulman's
condition.

Suppose $(\M,\N,\D)$ is a \ct\ with associated extension,
\[\P\hookrightarrow \G\overset{q}\twoheadrightarrow \S\] and fixed order
preserving section $j$.

\begin{definition} \label{defRegularizer} A \emph{regularizer} is a
subgroup $R\subseteq \U(\M)$ satisfying:
\begin{enumerate}
\item $\U(\N)\subseteq R\subseteq \G\N(\M,\D)$;
\item $\spn R$ is weak-$*$ dense in $\M$;
\item there is a homomorphism $\alpha\colon R \rightarrow \Aut(\N)$
such that
\begin{enumerate}
	\item if $p$ is a projection in $\D$ such that $\alpha_u|_{\D
p}=\id|_{\D p}$ then $\alpha_u|_{\N p}=\id|_{\N p}.$
	\item $\alpha_u(d) = udu^*$ for each $u\in R$ and $d\in \D$.
\end{enumerate}
\end{enumerate} We will call a map $\alpha$ satisfying conditions (i)
and (ii) of part (c) a \textit{regularizing map} for $R$.

When the \ct\ $(\M,\N,\D)$  has a regularizer, we say \textit{$(\M,\N,\D)$
satisfies Fulman's condition}.
\end{definition}

Note that if $R$ is a regularizer with regularizing map $\alpha$, then
$\ker \alpha = \U(\N)$ (\cite[Remark,
pg. 41]{FulmanCrPrvNAlEqReThSu}).

\begin{example} Let $\N$ be a von Neumann algebra and let $\D=\Z(\N)$.
Let $G$ be a discrete group acting on $\N$ by properly outer
automorphisms.  Further assume that the restriction of the action of
$G$ to $\D$ is properly outer.  Let $\M = \N \rtimes_\alpha G$.  Then
by Theorem~\ref{thm: crossed prod} $(\M,\N,\D)$ is a \ct.  Let $R$ be
the group generated by
\[ \{ u_g \colon g\in G \} \cup \{u\in \N \colon u \text{
    unitary}\}. \] Let $R:=\{u_g: g\in G\}$ and let $\alpha:
R\rightarrow \aut(\N)$ be $u_g\mapsto \ad_{u_g}$.  Since $G$ acts by properly outer automorphisms on
$\Z(\N)$, $\alpha$ is a regularizing map for $R$ so that
$R$ is a regularizer.  Thus $(\M, \N, \D)$ satisfies Fulman's
condition.
\end{example}

\appendix
\section{An Inverse Semigroup Description of Fulman's Condition}
Fulman's condition as stated in Definition~\ref{defRegularizer}, is
mysterious.  Our goal in this appendix is to establish
Theorem~\ref{FulmanRecast}, which shows Fulman's condition is
equivalent to the statement that a rather natural lifting problem
for inverse semigroups (Diagram~\ref{Fdiag}) has a positive solution.
We begin with a definition.

\begin{definition}\label{pauto} Let $(\N,\D)$ be a pair of von Neumann
algebras with $\D$ a von Neumann subalgebra of $\Z(\N)$.  A
\textit{partial automorphism of $(\N,\D)$} is a triple $(e,\alpha,f)$
consisting of projections $e, f\in \D$ and a normal $*$-isomorphism
$\alpha: f\N\rightarrow e\N$ satisfying $\alpha(e\D)=f\D$.  We will
use $\pauto(\N,\D)$ for the set of all partial automorphisms of
$(\N,\D)$.  If $f=0$ (or $e=0$) we say $(e,\alpha,f)$ is the zero
element of $\pauto(\N,\D)$.  Further, define an involution and a product in $\pauto(\N,\D)$ via,
\[(e,\alpha,f)^\dag:=(f,\alpha^{-1},e)\dstext{and}
(e_1,\alpha_1,f_1)(e_2,\alpha_2,f_2):=(\alpha_1(f_1e_2),(\alpha_1\circ\alpha_2)|_{\alpha_2^{-1}(e_2f_1)},
\alpha_2^{-1}(f_1e_2)).\] Then $\pauto(\N,\D)$ is an inverse monoid
with $0$.  Also
\[\E(\pauto(\N,\D))=\{(e, \text{id}|_{e\N},e): e\in\proj(\D)\}\] and
hence may be identified with $\proj(\D)$.  For
$\gamma=(e,\alpha,f)\in\pauto(\N,\D)$ and $x\in f\N$, we write
$\gamma(x)$ for the value of $\alpha$ at $x$.
\end{definition}

We require the following notions for an inverse semigroup $\R$.  
\begin{itemize}
\item  Two
elements $s, t\in \R$ are \textit{compatible} if $st^\dag$ and $s^\dag
t$ are idempotents; a subset $A\subseteq \R$ is \textit{compatible}
(\cite[page~26]{LawsonInSe} if every
pair of elements of $A$ is compatible.
\item $\R$ is \textit{infinitely
  distributive} (\cite[page~28]{LawsonInSe}) if whenever $I$ is an index set and $\{r_i\}_{i\in
  I}\subseteq  \R$ is such that $\bigvee_{i\in I} r_i$ exists then
for any $s\in\R$,
\[\bigvee_{i\in I} sr_i  \text{ and } \bigvee_{i\in I} r_i s \text{
    exist  and } s \left(\bigvee_{i\in I} r_i\right)=\bigvee_{i\in I}
  sr_i, \quad \left(\bigvee_{i\in I} r_i\right)s=\bigvee_{i\in I}
  r_is.\]
\item $\R$ is \textit{complete} (\cite[page~27]{LawsonInSe}) if whenever $A\subseteq \R$ is  a
  compatible subset, $\bigvee A$ exists.
\end{itemize}

\begin{lemma}\label{pautoinfdis}  The inverse semigroup
  $\pauto(\N,\D)$ is infinitely distributive and  complete.
\end{lemma}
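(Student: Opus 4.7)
The plan is to prove both properties by direct construction, leveraging the fact that $\proj(\D)=\E(\pauto(\N,\D))$ is a complete Boolean lattice. First I would unpack the compatibility condition concretely in $\pauto(\N,\D)$. Using the product formula, a short calculation shows that for $\gamma_i=(e_i,\alpha_i,f_i)$ ($i=1,2$), the product $\gamma_1\gamma_2^\dag$ is an idempotent iff $\alpha_1|_{f_1f_2\N}=\alpha_2|_{f_1f_2\N}$, and $\gamma_1^\dag\gamma_2$ is idempotent iff $\alpha_1^{-1}|_{e_1e_2\N}=\alpha_2^{-1}|_{e_1e_2\N}$. Thus a compatible family of partial automorphisms is precisely a family whose members agree (together with their inverses) on the intersections of source and range projections.

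For completeness, take a compatible family $\{\gamma_i\}_{i\in I}$ with $\gamma_i=(e_i,\alpha_i,f_i)$, and set $f:=\bigvee_i f_i$, $e:=\bigvee_i e_i$ in $\proj(\D)$. A standard Zorn's lemma argument in the Boolean lattice $\proj(\D)$ produces a pairwise orthogonal family $\{p_\lambda\}$ with $\bigvee_\lambda p_\lambda=f$ and each $p_\lambda\leq f_{i(\lambda)}$ for some $i(\lambda)\in I$. Set $q_\lambda:=\alpha_{i(\lambda)}(p_\lambda)\in\proj(\D)$. Compatibility of the inverses forces the $q_\lambda$ to be pairwise orthogonal: for $\lambda\neq\lambda'$ the product $q_\lambda q_{\lambda'}$ lies in $e_{i(\lambda)}e_{i(\lambda')}\D$, where $\alpha_{i(\lambda)}^{-1}=\alpha_{i(\lambda')}^{-1}$, and applying this common inverse yields a projection bounded by both $p_\lambda$ and $p_{\lambda'}$, hence zero. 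For $x\in f\N$ define
\[\alpha(x):=\sum_\lambda\alpha_{i(\lambda)}(p_\lambda x),\]
with $\sigma$-strong convergence ensured by the orthogonality of the ranges $q_\lambda\N$. Independence of the refinement follows from compatibility of the original family. I would then check that $\alpha$ is a normal $*$-homomorphism carrying $f\D$ into $e\D$; applying the same construction to the compatible family $\{\gamma_i^\dag\}$ produces a two-sided inverse, so $\alpha$ is a $*$-isomorphism $f\N\to e\N$ with $\alpha(f\D)=e\D$ and $\alpha(f)=\bigvee_\lambda q_\lambda=e$. The triple $\gamma:=(e,\alpha,f)$ dominates each $\gamma_i$ in the natural order; any other upper bound restricts to $\alpha_{i(\lambda)}$ on each $p_\lambda\N$ by compatibility, hence dominates $\gamma$, so $\gamma=\bigvee_i\gamma_i$.

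For infinite distributivity, fix a compatible family $\{\gamma_i\}$ with join $\gamma$ and an element $\delta\in\pauto(\N,\D)$. Compatibility of $\{\delta\gamma_i\}$ is immediate from the concrete characterization above, since composition with the fixed $*$-isomorphism $\delta$ preserves agreement on overlaps. The identity $\delta\gamma=\bigvee_i\delta\gamma_i$ then reduces to checking that the construction of the join applied to $\{\delta\gamma_i\}$ coincides with the product of $\delta$ and the construction applied to $\{\gamma_i\}$; this is routine unwinding of the product formula in $\pauto(\N,\D)$ on a pairwise orthogonal refinement that simultaneously refines the source projections of both families. The right-multiplication statement $(\bigvee_i\gamma_i)\delta=\bigvee_i\gamma_i\delta$ is symmetric.

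The main obstacle is the gluing step in the proof of completeness: verifying that the patched $\alpha$ is a well-defined normal $*$-isomorphism $f\N\to e\N$ with $\alpha(f\D)=e\D$. The delicate points are (i) the orthogonality of the image projections $\{q_\lambda\}$, which crucially uses inverse-compatibility, (ii) the $\sigma$-strong convergence and normality of the defining sum, and (iii) surjectivity onto $e\N$, which is most cleanly obtained by constructing a candidate inverse via the symmetric procedure on $\{\gamma_i^\dag\}$ and checking the two compositions are identities on the dense subspaces $\bigcup_\lambda p_\lambda\N$ and $\bigcup_\lambda q_\lambda\N$. Once these are established, everything else is a mechanical application of the product formula in $\pauto(\N,\D)$.
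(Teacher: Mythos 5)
Your proof is correct, but it takes a genuinely different route from the paper's. For completeness, the paper bootstraps in three stages: it first constructs the join of a \emph{finite orthogonal} set by an explicit finite sum, then handles a \emph{finite compatible} set by passing to the atoms of the finite Boolean algebra generated by the source idempotents, and finally treats an arbitrary compatible set $A$ as the strong limit of the increasing net of joins $a_F=\bigvee F$ over finite $F\subseteq A$. You instead do everything in one shot: a Zorn's lemma argument produces a maximal orthogonal refinement $\{p_\lambda\}$ of $\bigvee_i f_i$ subordinate to the source projections, and the join is defined directly as a $\sigma$-strongly convergent sum over the orthogonal central corners $q_\lambda\N$. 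The price is that you must verify by hand what the paper's staging makes automatic --- independence of the refinement (via forward compatibility), orthogonality of the image projections $q_\lambda$ (via inverse compatibility; your argument applying the common inverse to $q_\lambda q_{\lambda'}$ is exactly right), and bijectivity via the symmetric construction on $\{\gamma_i^\dag\}$. For infinite distributivity the difference is sharper: the paper does not compute anything, but simply invokes \cite[Proposition~1.4.20]{LawsonInSe}, which says that an inverse semigroup whose idempotent semilattice is infinitely distributive (here $\E(\pauto(\N,\D))=\proj(\D)$, a complete Boolean algebra) is itself infinitely distributive; you prove distributivity directly from your join construction, using that multiplication by a fixed $\delta$ preserves agreement on overlaps and then matching source projections by normality. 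Your version is self-contained and makes explicit the concrete characterization of compatibility (agreement of the maps, and of their inverses, on overlaps) that the paper never states; the paper's version is shorter because the general inverse-semigroup theory absorbs the distributivity argument, and its finite-to-infinite staging avoids both Zorn's lemma and the well-definedness check. One small point of hygiene: the paper's definition of infinite distributivity quantifies over \emph{all} families whose join exists, not just compatible ones, so your argument should note the standard fact that any family with an upper bound in an inverse semigroup is automatically compatible --- after which the two formulations coincide.
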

\begin{proof}
As $\proj(\D)$ is a complete Boolean
algebra,~\cite[Proposition~1.4.20]{LawsonInSe} shows
 $\pauto(\N,\D)$ is
an infinitely distributive inverse semigroup.

We turn now to showing  $\pauto(\N,\D)$ is complete.  Given
$a=(e_a,\alpha_a,f_a)\in \pauto(\N,\D)$,  identify $a^\dag a$ with
$f_a$
and $aa^\dag$ with $e_a$, so that the source and range of $a$ belong to $\proj(\D)$. 

 First suppose
that $A\subseteq \pauto(\N,\D)$ is a finite and orthogonal set.    
Let $e=\bigvee_{a\in A} aa^\dag $ and $f=\bigvee_{a\in A} a^\dag a$.
For $n\in \N f$, $n=\sum_{a\in A} na^\dag a$ and define
\[\alpha(n):=\sum_{a\in A} \alpha_a(na).\]
Then $(e,\alpha, f)\in \pauto(\N,\D)$.  For $a\in A$, $(e,\alpha,
f)(f_a,\id|_{\N f_a}, f_a)=a$ so $a\leq (e, \alpha, f)$.   Further, if
for every $a\in A$, $a\leq (e',\alpha', f')$, then $(e,\alpha,
f)=(e',\alpha',f')(f,\id|_{\N f}, f)=(e,\alpha, f)$.  
Thus, $(e,\alpha,f)=\bigvee A$.  So joins exist for finite orthogonal sets.

\providecommand{\atom}{\operatorname{atom}}
Next, suppose $A$ is a finite compatible set.   Let $\B$
be the (finite) Boolean algebra generated by $\{a^\dag a: a\in A\}$.  The identity of $\B$ is
$f:=\bigvee_{a\in A} a^\dag a$.  Let $\atom(\B)$ be the
(finite) set of atoms of $\B$.  Let $C:=\{ap: a\in A, p\in
\atom(\B)\}$.  Then $C$ is a finite orthogonal set of elements in
$\pauto(\N,\D)$.  Let $(e,\alpha, f):=\bigvee C$.   Let $a\in A$ and
$P_a:=\{p\in \atom(\B): ap\neq 0\}$.  Then
$a=\sum_{p\in P_a} ap$  and  $a^\dag a=f_a=\sum_{p\in P_a} p\leq f$.
So for every $p\in P_a$, $(e,\alpha, f)(p,\id|_{\N p}, p)=ap$.  Thus
 \[ (e,\alpha,f)(p, \id|_{\N p},p)=ap,\dstext{whence} (e,\alpha,f)(f_a, \id_{\N f_a},f_a)=a.\]   This shows that for every $a\in
 A$, $a\leq (e,\alpha,f)$.  On the other hand, if $a\leq (e',\alpha',
 f')$ for every $a\in A$, then $(e',\alpha',f')(f,\id|_{\N f},
 f)=(e,\alpha,f)$ so $(e,\alpha,f)=\bigvee A$, showing joins exist for
 any finite compatible set.

Finally, let $A\subseteq \pauto(\N,\D)$ be an arbitrary compatible
subset.  Let $\F$ be the set of all finite subsets of $A$ ordered by
inclusion and for
$F\in\F$, let $a_F=\bigvee F$.  Notice that if $F_1\subseteq F_2$,
then $a_{F_1}\leq a_{F_2}$.    Write $a_F=(e_F, \alpha_F, f_F)$.
Let $e=\bigvee\{aa^\dag : a\in A\}=\bigvee_{F\in \F} e_F$ and
$f=\bigvee\{a^\dag a: a\in A\}=\bigvee_{F\in\F} f_F$.  For $n\in \N f$,
the net $\alpha_F(nf_F)$ converges strongly, and we define
$\alpha(n)=\lim\alpha_F(nf_F)$.    Then $(e,\alpha, f)=\bigvee A$.
\end{proof}

The inverse semigroup $\pauto(\N,\D)$ may be written as an extension,
\[\cliff{\N,\D}\hookrightarrow \pauto(\N,\D)\overset{\pi}\twoheadrightarrow
\fund{\N,\D}\] where $\cliff{\N,\D}$ is the Clifford inverse
subsemigroup of all elements of $\pauto(\N,\D)$ which are Munn related
to an idempotent, and $\fund{\N,\D}$ is the quotient of
$\pauto(\N,\D)$ by the Munn relation.

Henceforth, fix a \ct\ $(\M,\N,\D)$ with associated extension
$\P\hookrightarrow \G\overset{q}\twoheadrightarrow \S$ and
order-preserving section $j$.   We shall be 
interested in the semigroup $\pauto(\N,\D)$ arising from this \ct.

The idempotents of $\pauto(\N,\D)$ (and hence those of $\fund{\N,\D}$) may be
identified with $\E(\S)$.  
We shall show that for any Cartan triple,
there is a one-to-one inverse semigroup homomorphism $\theta:
\S\rightarrow \fund{\N,\D}$ which fixes idempotents.  Our goal in this
section is to show that Fulman's condition is satisfied if and only if
there is a lifting of $\theta$ to an inverse semigroup homomorphism
$\alpha$ so that the following diagram commutes:
\begin{equation}\label{Fdiag} \xymatrix{ & \pauto(\N,\D)\ar[d]^{\pi}\\
\S\ar[r]_-\theta\ar@{-->}[ur]^-\alpha & \fund{\N,\D}.}
\end{equation}
For $(e,\alpha,f)\in\pauto(\N,\D)$, let $[e,\alpha,f]\in
\fund{\N,\D}$ denote the Munn equivalence class of $(e,\alpha,f)$.  It
will be helpful to have an explicit description of the Munn relation
on $\pauto(\N,\D)$.

\begin{lemma}\label{Munnonpauto} For $i=1,2$, let
$(e_i,\alpha_i,f_i)\in \pauto(\N,\D)$.  The following are equivalent.
\begin{enumerate}
\item $(e_1,\alpha_1,f_1)$ is Munn related to $(e_2,\alpha_2,f_2)$;
\item for every $d\in \proj(\D)$, $\alpha_1(df_1)=\alpha_2(df_2)$;
\item $e_1=e_2$, $f_1=f_2$ and $\alpha_1|_{f_1\D}=\alpha_2|_{f_2\D}$.
\end{enumerate}
\end{lemma}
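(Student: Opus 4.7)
The plan is to prove (a) $\iff$ (b) by a direct computation of the Munn conjugation in $\pauto(\N,\D)$, and then deduce (b) $\iff$ (c) by a short spectral/normality argument.

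First I would compute the action of conjugation by a partial automorphism on an idempotent. Recalling that $\E(\pauto(\N,\D))$ is identified with $\proj(\D)$ via $p \leftrightarrow (p,\id|_{p\N},p)$, two applications of the product formula from Definition~\ref{pauto} yield
\[(e,\alpha,f)\,(p,\id|_{p\N},p)\,(e,\alpha,f)^\dag = \bigl(\alpha(fp),\,\id|_{\alpha(fp)\N},\,\alpha(fp)\bigr),\]
which, under the identification of idempotents with projections, is just the projection $\alpha(fp)$. The only delicate aspect is keeping track of source and range projections in the triple product, but the cancellations fall out immediately once one notes that $f\cdot p = fp$ and that $\alpha^{-1}\circ \alpha$ restricts to the identity on the relevant corner. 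Once this formula is in hand, the Munn relation ``$v_1 p v_1^\dag = v_2 p v_2^\dag$ for every idempotent $p$'' translates verbatim into condition (b), giving (a) $\iff$ (b).

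Next I would prove (b) $\implies$ (c). Setting $p=1$ in (b), and using that the isomorphism $\alpha_i\colon f_i\N\to e_i\N$ carries the unit $f_i$ to the unit $e_i$, gives $e_1 = e_2$. For the equality $f_1=f_2$: since each $\alpha_i$ is injective, for any $p\in\proj(\D)$ we have $\alpha_i(f_ip)=0$ iff $p\leq f_i^\perp$. Thus condition (b) forces the two sets $\{p\in\proj(\D): p\leq f_1^\perp\}$ and $\{p\in\proj(\D): p\leq f_2^\perp\}$ to coincide; taking suprema yields $f_1^\perp = f_2^\perp$, hence $f_1=f_2=:f$. With both source and range projections matched, (b) reads $\alpha_1(fp) = \alpha_2(fp)$ for every $p\in\proj(\D)$; linearity and normality of the $\alpha_i$ then upgrade this to $\alpha_1|_{f\D} = \alpha_2|_{f\D}$.

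The implication (c) $\implies$ (b) is immediate, since the three data $e$, $f$, and $\alpha|_{f\D}$ already determine $\alpha(fp)$ for every $p\in\proj(\D)$. The main (mild) obstacle in the whole argument is the index-tracking in the conjugation computation; once that is handled, the remaining steps are direct consequences of injectivity, normality, and the Boolean structure of $\proj(\D)$.
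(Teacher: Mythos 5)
Your proof is correct and follows essentially the same route as the paper's: both hinge on computing
$(e,\alpha,f)(p,\id|_{p\N},p)(e,\alpha,f)^\dag=(\alpha(fp),\id|_{\alpha(fp)\N},\alpha(fp))$,
which translates the Munn relation directly into condition (b), and both then use injectivity of the $\alpha_i$ together with the fact that $\proj(f_i\D)$ generates $f_i\D$ to pass to (c). The only cosmetic difference is in how $f_1=f_2$ is extracted in (b)$\implies$(c): the paper tests (b) at $d\in\{f_1,f_2,f_1f_2\}$ and cancels, while you use the annihilator characterization $\alpha_i(f_ip)=0$ if and only if $p\le f_i^\perp$ --- both are one-line injectivity arguments, so the two proofs are essentially identical.
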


\begin{proof} Suppose (a) holds.  Then for any $d\in\proj(\D)$,
$(d,\id|_{d\N}, d)\in \E(\pauto(\N,\D))$, so
\begin{equation}\label{Munnpauto1}
(e_i,\alpha_i,f_i)(d,\id|_{d\N},d)(f_i,\alpha_i^{-1}, e_i)
=(\alpha_i(df_i), \id_{\alpha_i(df_i)\N}, \alpha_i(df_i)),
\end{equation} which yields (b).

Now suppose (b) holds.  Taking $d\in\{f_1,f_2, f_1f_2\}$ gives
$\alpha_1(f_1)=\alpha_2(f_1f_2)=\alpha_1(f_1f_2)=\alpha_2(f_2)$, so
that $f_1=f_2$ and $e_1=e_2$.  Since $f_i\D$ is generated by
$\proj(f_i\D)$, we obtain (c).

Finally, assume (c) holds.  Let $d\in \proj(\D)$.
Examining~\eqref{Munnpauto1} we obtain
\begin{align*}
(e_1,\alpha_1,f_1)(d,\id_{d\N},d)(f_1,\alpha_1^{-1},e_1)
&=(\alpha_1(df_1),\id_{\alpha_1(df_1)\N},\alpha_1(df_1))\\
&=(\alpha_2(df_2),\id_{\alpha_2(df_2)\N},\alpha_2(df_2))\\
&=(e_2,\alpha_2,f_2)(d,\id_{d\N},d)(f_2,\alpha_2^{-1},e_2).
\end{align*} Thus (a) holds and the proof is complete.
\end{proof}

We now observe that there is always a one-to-one inverse semigroup
homomorphism of $\S$ into $\fund{\N,\D}$.  Note that if $v \in \G$,
then $v$ defines a partial automorphism in $\pauto(\N,\D)$.  Indeed if
we define $\ad_v$ by
\begin{align*} \ad_v \colon & v^*v \N \rightarrow vv^* \N \\ & v^*v x
\mapsto v x v^*,
\end{align*} then $(vv^*, \ad_v, v^*v) \in \pauto(\N,\D)$.  We define
a map $\theta \colon \S \rightarrow \fund{\N,\D)}$ by
$$ \theta(s) = [ j(ss^\dag), \ad_{j(s)}, j(s^\dag s)]. $$
By Lemma~\ref{Munnonpauto}, if $v, w \in \G$ and $v$ and $w$ are Munn
equivalent, then $[vv^*, \ad_v, v^*v] = [ww^*, \ad_w, w^*w]$.  Hence
the map $\theta$ is independent of the choice of $j$.  Indeed, for any $w
\in q^{-1}\{s\}$, $\theta(s) = [ww^*, \ad_w, w^*w]$.
Thus we may use any of 
\[[j(ss^\dag), \alpha_{j(s)}, j(s^\dag s)], \quad [ss^\dag, \alpha_s,
  s^\dag s], \dstext{or} [j(ss^\dag), \alpha_s, j(s^\dag s)]\] to
denote $\theta(s)$.   
  
\begin{proposition}\label{fundtofund} The map $\theta:\S\rightarrow
\fund{\N,\D}$ given by
\[\theta(s):=[j(ss^\dag), \ad_s, j(s^\dag s)]\] is a one-to-one
homomorphism of inverse semigroups such that $\theta|_{\E(\S)}$ is an
isomorphism of $\E(\S)$ onto $\E(\fund{\N,\D})$.
\end{proposition}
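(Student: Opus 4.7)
The approach is to exhibit $\theta$ as the descent, through the two Munn quotients, of the natural semigroup map $\iota\colon \G \to \pauto(\N,\D)$ sending $v$ to the triple $(vv^*, \ad_v, v^*v)$. Given this factorization, each assertion of the proposition reduces to a property of $\iota$ which is checkable via Lemma~\ref{Munnonpauto}. Before proceeding, one observes that $\iota(v)$ genuinely lies in $\pauto(\N,\D)$: since $v$ normalizes $\D$ and $v^*v \in \D \subseteq \Z(\N)$, conjugation by $v$ gives a normal $*$-isomorphism from $v^*v\N$ onto $vv^*\N$ that maps $v^*v\D$ onto $vv^*\D$.

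First I would verify $\iota$ is a semigroup homomorphism. Unwinding the product in $\pauto(\N,\D)$ from Definition~\ref{pauto} with $\alpha_1 = \ad_v$, $f_1 = v^*v$, $\alpha_2 = \ad_w$, $e_2 = ww^*$, the range projection simplifies to $v(v^*v \cdot ww^*)v^* = vww^*v^* = (vw)(vw)^*$; the source projection to $w^*(v^*v \cdot ww^*)w = w^*v^*vw = (vw)^*(vw)$; and the middle map is $\ad_{vw}$ on its appropriate domain. Thus $\iota(v)\iota(w) = \iota(vw)$. Next I would show $\iota$ respects Munn equivalence, so that it descends to a well-defined map $\theta\colon \S \to \fund{\N,\D}$ with $\theta \circ q = \pi \circ \iota$. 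If $v, w \in \G$ are Munn equivalent, then $vev^* = wew^*$ for every $e \in \E(\G) = \proj(\D)$. Taking $e = I$ gives $vv^* = ww^*$; taking $e = v^*v$ yields $wv^*vw^* = vv^* = ww^*$, which forces $v^*v \le w^*w$, and symmetry yields $v^*v = w^*w$. Expanding an arbitrary $d \in \D$ weak-$*$ against its spectral projections gives $\ad_v|_\D = \ad_w|_\D$, and by Lemma~\ref{Munnonpauto}(c) the triples $\iota(v)$ and $\iota(w)$ are Munn equivalent in $\pauto(\N,\D)$. Combining the two steps, $\theta$ is a well-defined semigroup homomorphism.

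For injectivity, if $\theta(s) = \theta(t)$ then Lemma~\ref{Munnonpauto}(b) gives $j(s)ej(s)^* = j(t)ej(t)^*$ for every $e \in \proj(\D) = \E(\G)$, so $j(s)$ and $j(t)$ are Munn equivalent in $\G$ and hence $s = q(j(s)) = q(j(t)) = t$. For the idempotent statement, $\theta(e) = [j(e), \id|_{j(e)\N}, j(e)]$ is visibly idempotent in $\pauto(\N,\D)$; conversely every idempotent of $\pauto(\N,\D)$ has the form $(p, \id|_{p\N}, p)$ for some $p \in \proj(\D) = \E(\G) = j(\E(\S))$, and because the Munn congruence is idempotent separating, distinct such triples lie in distinct classes. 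Therefore $\theta|_{\E(\S)}$ is a bijection onto $\E(\fund{\N,\D})$. I do not anticipate a significant obstacle: the proof is essentially a bookkeeping exercise, with the most delicate point being the derivation of equality of source and range projections from Munn equivalence in $\G$, which uses carefully chosen test idempotents.
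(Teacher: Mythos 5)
Your proof is correct, but it takes a genuinely different route from the paper's. The paper never establishes that $\iota\colon v\mapsto(vv^*,\ad_v,v^*v)$ is a homomorphism on $\G$; instead it proves multiplicativity directly at the level of $\S$: it writes out $\theta(s_1s_2)$ and $\theta(s_1)\theta(s_2)$ as explicit triples, reduces the problem to the identity $\ad_{s_1s_2}=\ad_{s_1}\circ\bigl(\ad_{s_2}|_{j(s_2^\dag s_1^\dag s_1s_2)\N}\bigr)$, verifies this identity only on the projections $j(e)$, $e\in\E(\S)$, and then invokes Lemma~\ref{Munnonpauto} to promote agreement on $\proj(\D)$ to equality of Munn classes --- that lemma is precisely the device compensating for the fact that the section $j$ is not multiplicative. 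Injectivity in the paper is deduced from fundamentality of $\S$, whereas you deduce it from the quotient description $\S=\G/R_M$; these are equivalent formulations of the same fact. Your factorization $\theta\circ q=\pi\circ\iota$ buys a cleaner structural picture: upstairs on $\G$ the map $\iota$ is an honest homomorphism by a short computation with the product formula of Definition~\ref{pauto}, the section enters only through the evaluation $\theta(s)=\pi(\iota(j(s)))$, and Lemma~\ref{Munnonpauto} is needed only for the descent and for injectivity. The cost is that you must prove the Munn-compatibility of $\iota$ in detail, which the paper had already recorded (without proof) in the paragraph preceding the proposition. One small slip in that step: from $w(v^*v)w^*=ww^*$ one obtains $w^*w\le v^*v$, not $v^*v\le w^*w$ as you wrote; the symmetric application with the test idempotent $w^*w$ gives the reverse inequality, so your conclusion $v^*v=w^*w$ stands. (Alternatively, since $R_M$ is a congruence it respects inverses, so $v^\dag\,R_M\,w^\dag$, and applying the Munn condition to $v^\dag, w^\dag$ with $e=I$ gives $v^*v=w^*w$ at once.)
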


\begin{proof} For $e\in\E(\S)$, $\theta(e)=[j(e), \id_{j(e)\N},
j(e)]$, so $\theta|_{\E(\S)}$ is an isomorphism of $\E(\S)$ onto
$\E(\fund{\N,\D})$.

Take $s_1, s_2\in\S$.  Then
\[ \theta(s_1s_2)=[j(s_1s_2s_2^\dag s_1^\dag), \ad_{s_1s_2},
j(s_2^\dag s_1^\dag s_1s_2)].\] On the other hand,
$\ad_{s_2}^{-1}(j(s_1^\dag s_1 s_2 s_2^\dag))=j(s_2^\dag s_1^\dag
s_1s_2)$ and $\ad_{s_1}(j(s_1^\dag s_1 s_2s_2^\dag))=j(s_1s_2s_2^\dag
s_1^\dag)$, so
\[[j(s_1s_1^\dag),\ad_{s_1}, j(s_1^\dag
s_1)][j(s_2s_2^\dag),\ad_{s_2},j(s_2^\dag s_2)] =[j(s_1s_2s_2^\dag
s_1^\dag), \ad_{s_1}\circ (\ad_{s_2}|_{j(s_2^\dag s_1^\dag
s_1s_2)\N}),j(s_2^\dag s_1^\dag s_1 s_2)].
\] Thus to show that $\theta$ is a homomorphism it suffices to show
that
$$  \ad_{s_1s_2} =  \ad_{s_1}\circ (\ad_{s_2}|_{j(s_2^\dag s_1^\dag s_1s_2)\N}). $$

Note that for each $s\in \S$ and $e\in\E(\S)$,
\begin{equation*} \theta(s)(j(e))=j(s)j(e)j(s)^*=j(ses^\dag).
\end{equation*} Hence for $e\in\E(\S)$, $\ad_{s_1s_2}(j(e)j(s_2^\dag
s_1^\dag s_1s_2))=\ad_{s_1}(\ad_{s_2}(j(e)j(s_2^\dag s_1^\dag s_1
s_2)))$.  An application of Lemma~\ref{Munnonpauto} now shows that
$\theta$ is multiplicative on $\S$.  Hence $\theta$ is an inverse
semigroup homomorphism.

If $\theta(s_1)=\theta(s_2)$, then for every $e\in\E(\S)$,
$\ad_{s_1}|_{j(e)\D}=\ad_{s_2}|_{j(e)\D}$, so that in particular,
$s_1es_1^\dag=s_2es_2^\dag$ for every $e\in\E(\S)$.  As $\S$ is
fundamental, $s_1=s_2$, whence $\theta$ is one-to-one.
\end{proof}

We now show that a regularizer may be viewed as a homomorphism of
$q(R)$ into $\aut(\N)$ satisfying Fulman's conditions.

\begin{lemma} \label{kernelFulman} Suppose a regularizer $R$ exists for $(\M,\N,\D)$.
Let $\alpha:R\rightarrow \aut(\N)$ be a regularizing map.  Then
$\alpha$ induces a one-to-one group homomorphism $\tilde\alpha \colon
q(R)\rightarrow \aut(\N,\D)$ such that for every $e\in\E(\P)$ and
$U\in R$,
\[\tilde\alpha_{q(U)}(e)=\ad_U (e)=j(q(U))ej(q(U))^*.\]
\end{lemma}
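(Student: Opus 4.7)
The strategy is to verify in order: well-definedness of $\tilde\alpha$ on $q(R)$, that its image lies in $\aut(\N,\D)$, the group homomorphism and injectivity properties, and finally the stated formula. The conceptual content is that the Munn class of $U\in R$ records precisely the restriction $\alpha_U|_\D$, and condition (c)(i) of Definition~\ref{defRegularizer} upgrades agreement on $\D$ to agreement on all of $\N$. Consequently any two elements of $R$ in the same Munn class differ by an element of $\U(\N)$, on which any regularizing map is forced to be trivial.

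\textbf{Well-definedness.} Suppose $U_1,U_2\in R$ with $q(U_1)=q(U_2)$. Then $U_1$ and $U_2$ are Munn-related in $\G$, so $U_1 d U_1^*=U_2 d U_2^*$ for every projection $d\in\proj(\D)=\E(\G)$. Setting $V:=U_1^*U_2\in R$, we compute $V d V^* = U_1^*(U_2 d U_2^*)U_1 = U_1^*(U_1 d U_1^*)U_1 = d$ for each such $d$, so $V$ commutes with every projection in $\D$ and hence with all of $\D$. Thus $V\in\D^c\cap\U(\M)=\U(\N)$. Condition (c)(ii) then gives $\alpha_V|_\D=\id_\D$, and condition (c)(i) applied with $p=I$ promotes this to $\alpha_V=\id_\N$. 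Since $\alpha$ is a homomorphism, $\alpha_{U_1}=\alpha_{U_2}$, so the rule $\tilde\alpha_{q(U)}:=\alpha_U$ defines a map on $q(R)$.

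\textbf{Codomain, homomorphism, injectivity.} For any $U\in R\subseteq \G\N(\M,\D)$ and $d\in\D$, $\alpha_U(d)=UdU^*\in\D$ by (c)(ii) and the definition of $\G\N(\M,\D)$; applying the same to $U^{-1}$ yields $\alpha_U(\D)=\D$, so $\alpha_U\in\aut(\N,\D)$. Since $q$ and $\alpha$ are both homomorphisms, so is $\tilde\alpha$. For injectivity, suppose $\tilde\alpha_{q(U)}=\id_\N$. Then $UdU^*=\alpha_U(d)=d$ for all $d\in\D$, so $U\in\D^c=\N$; hence $UeU^*=e$ for every $e\in\proj(\D)=\E(\G)$, i.e.\ $U$ is Munn-related to $I$, which gives $q(U)=1$.

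\textbf{Formula and difficulty.} For $e\in\E(\P)=\proj(\D)$, we have $\tilde\alpha_{q(U)}(e)=\alpha_U(e)=UeU^*=\ad_U(e)$ by (c)(ii) and the definition of $\ad_U$; and since $U$ and $j(q(U))$ lie in the same Munn class, $UeU^*=j(q(U))\,e\,j(q(U))^*$, as required. No step is a genuine obstacle: the entire argument is a careful unpacking of Definition~\ref{defRegularizer}. The one point worth emphasizing is that condition (c)(i) is exactly what is needed for $\alpha$ to descend through the Munn quotient, since without it there would be no reason for the class-wise assignment $\tilde\alpha$ to be well-defined.
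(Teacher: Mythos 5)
Your proof is correct and follows essentially the same route as the paper's: condition (c)(ii) puts each $\alpha_U$ in $\aut(\N,\D)$, condition (c)(i) (applied with $p=I$, after reducing to a unitary in $\U(\N)$ via the group structure of $R$) makes $\alpha$ descend through the Munn quotient and gives injectivity, and the displayed formula is immediate. The paper's proof is merely a compressed statement of these same steps; your write-up supplies the details it leaves implicit.
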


\begin{proof} By condition (c)(ii) of Definition~\ref{defRegularizer},
$(I,\alpha_U,I)\in \aut(\N,\D)$ for every $U\in R$.  Applying
condition (c)(i) of Definition~\ref{defRegularizer}, it follows that there exists a
one-to-one group homomorphism $\tilde\alpha\colon q(R)\rightarrow
\aut(\N,\D)$.  If $e\in\E(\P)$, and $U\in R$, then
$\tilde\alpha_{q(U)}(e)=\alpha_U(e)=UeU^{-1}$.
\end{proof}

\begin{lemma}
Let $R$ be a regularizer for $(\M,\N,\D)$ and let $\R:=\{q(s)e: s\in R, e\in \E(\S)\}$.
Then  $\R$ is an inverse semigroup and $\S$ is  isomorphic to the join completion of $\R$.   
\end{lemma}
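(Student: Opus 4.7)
The plan is to first verify that $\R$ is an inverse subsemigroup of $\S$ containing $\E(\S)$, and then prove that every element of $\S$ is a supremum of compatible elements of $\R$. Since $\S$ is a complete Boolean inverse monoid by Definition~\ref{D: Cartan inv}, this join-density identifies $\S$ with the join completion of $\R$.

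Closure of $\R$ under the involution follows from the identity $et=t(t^\dag e t)$ (valid for any idempotent $e$ in an inverse semigroup):
$$(q(s)e)^\dag = e\,q(s^{-1}) = q(s^{-1})\,\bigl(q(s)\,e\,q(s^{-1})\bigr)\in\R,$$
using that $s^{-1}\in R$ (as $R$ is a group) and $q(s)\,e\,q(s^{-1})\in\E(\S)$. The same identity rewrites
$(q(s_1)e_1)(q(s_2)e_2) = q(s_1 s_2)\cdot\bigl((q(s_2)^\dag e_1 q(s_2))e_2\bigr)\in\R$,
giving closure under multiplication, and $\E(\S)\subseteq\R$ because $I\in\U(\N)\subseteq R$ with $q(I)=1$.

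The main step is join-density. Fix $s\in\S$, set $A=\{r\in\R:r\le s\}$; all elements of $A$ lie below $s$, so they are pairwise compatible and $\bigvee A$ exists in $\S$ with $\bigvee A\le s$. Arguing by contradiction, assume $\bigvee A<s$. Every $r\in A$ satisfies $r^\dag r\le(\bigvee A)^\dag(\bigvee A)$, so working in the Boolean algebra $\E(\S)$ the idempotent $g:=s^\dag s\wedge\neg(\bigvee A)^\dag(\bigvee A)$ is non-zero (otherwise $\bigvee A=s$); then $s':=sg$ is non-zero, $s'\le s$, and $s'\wedge r=s(g\wedge r^\dag r)=0$ for every $r\in A$. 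Set $v:=j(s')\in\G$, which is non-zero. Weak-$*$ density of $\spn R$ in $\M$, together with weak-$*$ continuity and faithfulness of $E$, forces some $u\in R$ with $E(u^*v)\ne 0$: otherwise $E(x^*v)=0$ for every $x\in\spn R$ and hence for every $x\in\M$, so $E(v^*v)=0$ and $v=0$. By Lemma~\ref{Emap}, $uE(u^*v)=vp$ where $p:=j(q(u^*v)\wedge 1)\ne 0$ lies in $\proj(\D)$. Applying $q$ yields a non-zero element $r:=q(u)\cdot q(E(u^*v))=s'\cdot q(p)\in\R$ with $r\le s'\le s$, so $r\in A$ and thus $r\le\bigvee A$. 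Combined with $r\le s'$ and $s'\wedge\bigvee A=0$ this gives $r=0$, contradicting $r\ne 0$. Hence $s=\bigvee A$, and completeness of $\S$ identifies it with the join completion of $\R$.

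The main delicate point is the Boolean-algebra bookkeeping with source idempotents: producing $s'$ orthogonal to $\bigvee A$ inside $\{t:t\le s\}$, and then checking that the $r$ extracted from $u$ and $E(u^*v)$ lies simultaneously in $A$ and below $s'$. The density step is a short faithfulness-plus-continuity argument once Lemma~\ref{Emap} is invoked to convert $E(u^*v)$ into the visible element $q(u)\cdot q(E(u^*v))$ of $\R$.
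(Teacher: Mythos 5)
Your proof is correct and follows essentially the same route as the paper's: both argue by contradiction using a Boolean complement below $s$, lift it to $\G$, use weak-$*$ density of $\spn R$ together with faithfulness of $E$ and Lemma~\ref{Emap} to produce a non-zero element of $\R$ below that complement, and conclude via join-density and Lawson's join-completion theorem. The only differences are cosmetic (you take the join over $A=\{r\in\R: r\le s\}$ and lift via $j$, where the paper joins $\{q(r)\wedge s : r\in R\}$ and lifts via an arbitrary preimage), plus you spell out the closure calculation for $\R$ that the paper leaves as ``a calculation.''
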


\begin{proof}
A calculation shows $\R$ is an inverse semigroup, and by
definition, $\S$ is complete.   Notice
that every 
compatible order ideal of $\R$ is also a compatible order ideal of
$\S$.  Thus by the proof of~\cite[Theorem~1.4.23]{LawsonInSe}, the
join completion of $\R$ is contained in $\S$.

Take $s\in \S$ and let $t = \bigvee\{q(r) \wedge s \colon r\in R\}$.
Suppose $t\neq s$.
As $\{a \in \S \colon a \leq s\}$ is a Boolean algebra, there is a $u \in \S$ such that $u \vee t =s$ and $u \wedge t = 0$.
There is a $w\in \G\N(\M,\D)$ such that $q(w)=u$.
As $R$ densely spans $\M$, there is a $U \in R$ such that $E(U^*w)\neq 0$.
Hence $v=UE(U^*w) \neq 0$. 
Note that $v\in \G\N(\M,\D)$ and
\begin{align*}
	q(v) &= q(U)q(E(u^*w))
	= q(U)(q(U^*)q(w)\wedge 1)\\
	&= q(w) \wedge q(U) 
	\leq s \wedge q(U).
\end{align*}
Hence $q(v) \leq t$.
However, $q(v) \leq u$. Hence $u=0$, and $t=s$.
For every $r\in R$, $q(r)\wedge s\in \R$. 
Hence the completion of $\R$ is $\S$.
\end{proof}

Next we show that Fulman's condition implies that there is a
homomorphism of $\S$ into $\pauto(\N,\D)$ which lifts the map $\theta$
described in Proposition~\ref{fundtofund}.

\begin{lemma}\label{extendtoS} Suppose $\Gamma\subseteq \S$ is a group
(under the multiplication inherited from $\S$) whose unit is $1\in\S$.
Assume that $\alpha:\Gamma\rightarrow \aut(\N,\D)$ is a one-to-one
homomorphism such that for every $s\in\Gamma$, Fulman's condition (c)
is satisfied for $\alpha_s$, that is,
\begin{enumerate}
\item[(i)] if $p\in\proj(\D)$ satisfies $\alpha_s|_{p\D}=\id|_{p\D}$,
then $\alpha_s|_{p\N}=\id|_{p\N}$; and
\item[(ii)] for $d\in\D$, $\alpha_s(d)=j(s)dj(s)^*$.
\end{enumerate} Let $\S_\Gamma\subseteq \S$ be the smallest Cartan
inverse submonoid of $\S$ containing $\Gamma$ and $\E(\S)$.  Then
$\alpha$ extends uniquely to a one-to-one homomorphism
$\alpha':\S_\Gamma\rightarrow\pauto(\N,\D)$.  In addition $\pi\circ
\alpha'=\theta|_{\S_\Gamma}$.
\end{lemma}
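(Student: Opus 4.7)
The plan proceeds in two stages. Stage 1: extend $\alpha$ to the sub-inverse semigroup $\Gamma\E(\S):=\{se:s\in\Gamma,\ e\in\E(\S)\}$ of $\S$. Stage 2: extend further to $\S_\Gamma$ by compatible joins, exploiting that $\pauto(\N,\D)$ is complete and infinitely distributive by Lemma~\ref{pautoinfdis}.

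For Stage 1, after identifying $\E(\S)$ with $\proj(\D)$ via $j$, set
\[\alpha'(se):=(\alpha_s(e),\,\alpha_s|_{e\N},\,e)\in\pauto(\N,\D).\]
Fulman's condition (i) is precisely what makes this well-defined: if $s_1e_1=s_2e_2$, then comparing $\S$-sources (using $s_i^\dag s_i=1$) forces $e_1=e_2$, and setting $t:=s_2^{-1}s_1\in\Gamma$ we find $te_1=e_1$; for every $e\le e_1$ it follows that $te=e$ and (applying $\dag$) $et^\dag=e$, so $tet^\dag=e$. By (ii) this says $\alpha_t|_{e_1\D}=\id$, and then (i) promotes this to $\alpha_t|_{e_1\N}=\id$, so $\alpha_{s_1}|_{e_1\N}=\alpha_{s_2}|_{e_1\N}$. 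The same chain of implications, read in reverse and closed off with fundamentality of $\S$, yields injectivity of $\alpha'$ on $\Gamma\E(\S)$: from $\alpha'(s_1e_1)=\alpha'(s_2e_2)$ we extract $tet^\dag=e$ for all $e\le e_1$, so $te_1=e_1$ and hence $s_1e_1=s_2e_2$. Multiplicativity is a direct computation: in $\S$ one has $(s_1e_1)(s_2e_2)=(s_1s_2)(fe_2)$ where $f:=s_2^{-1}e_1s_2\in\E(\S)$ corresponds to $\alpha_{s_2}^{-1}(e_1)$ by (ii), and this matches the product formula in $\pauto(\N,\D)$. The equality $\pi\circ\alpha'=\theta|_{\Gamma\E(\S)}$ is then immediate from Lemma~\ref{Munnonpauto}.

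For Stage 2, I claim that $\S_\Gamma$ equals the set of joins $\bigvee B$ of compatible subsets $B\subseteq\Gamma\E(\S)$: this set contains $\Gamma\cup\E(\S)$, and is closed under product and inverse (by infinite distributivity of $\S$) and under compatible joins (by associativity of joins); conversely, any Cartan inverse submonoid containing $\Gamma\cup\E(\S)$ must contain all such joins. Because $\alpha'$ is a semigroup homomorphism on $\Gamma\E(\S)$ it carries compatible sets to compatible sets, so Lemma~\ref{pautoinfdis} gives that $\bigvee\alpha'(B)$ exists in $\pauto(\N,\D)$, and I set $\alpha'(\bigvee B):=\bigvee\alpha'(B)$.

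The main obstacle is verifying well-definedness, multiplicativity, and injectivity of the join extension. For well-definedness, given $\bigvee B_1=\bigvee B_2$, I pass to the common refinement $B:=\{b_1\wedge b_2:b_1\in B_1,\ b_2\in B_2\}\subseteq\Gamma\E(\S)$ (meets with idempotents keep us inside $\Gamma\E(\S)$), note that $\bigvee B=\bigvee B_1=\bigvee B_2$ by infinite distributivity of $\S$, and use infinite distributivity of $\pauto(\N,\D)$ together with Stage~1 to conclude $\bigvee\alpha'(B)=\bigvee\alpha'(B_i)$. Multiplicativity on $\S_\Gamma$ then follows by the same distributivity argument applied to products of compatible joins, and injectivity follows by localizing: if $\alpha'(\bigvee B_1)=\alpha'(\bigvee B_2)$, then cutting by each $b\in\Gamma\E(\S)$ and invoking Stage~1 injectivity forces the joins to agree. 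Uniqueness is then automatic — any homomorphism extending $\alpha$ must agree on $\Gamma$, hence on $\Gamma\E(\S)$ by multiplicativity and (ii), and hence on $\S_\Gamma$ because the target is complete. Finally, $\pi\circ\alpha'=\theta|_{\S_\Gamma}$ transfers from $\Gamma\E(\S)$ to all of $\S_\Gamma$ because both $\pi$ and $\theta$ preserve compatible joins.
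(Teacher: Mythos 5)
Your Stage 1 is exactly the paper's argument: the paper also sets $\R:=\{se: s\in\Gamma,\ e\in\E(\S)\}$, defines $\alpha'(se):=(j(ses^\dag),\, \alpha_s|_{j(e)\N},\, j(e))$, uses Fulman's condition applied to $s_1^\dag s_2$ for well-definedness, and checks multiplicativity and $\pi\circ\alpha'=\theta$ on $\R$ just as you do (you are in fact more detailed than the paper here, e.g.\ in noting that comparing sources forces $e_1=e_2$, and in proving injectivity on $\R$ directly). The divergence is Stage 2: having identified $\S_\Gamma$ with the join completion of $\R$, the paper does not redo the completion argument by hand but simply invokes~\cite[Theorem~1.4.24]{LawsonInSe} --- the universal property of the join completion, whose hypotheses on the target are exactly what Lemma~\ref{pautoinfdis} supplies --- to get the unique join-preserving extension; and it obtains injectivity for free by composing with $\pi$: since $\pi\circ\alpha'=\theta|_{\S_\Gamma}$ and $\theta$ is one-to-one by Proposition~\ref{fundtofund}, $\alpha'$ is one-to-one.

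Two of your Stage 2 verifications are under-justified as written, and they are precisely the steps the paper's shortcuts avoid. First, the ``localization'' injectivity: cutting $\alpha'(x_2)$ by $\alpha'(b^\dag b)$ for $b\in B_1$ produces $\alpha'(x_2b^\dag b)=\bigvee_{b'\in B_2}\alpha'(b'b^\dag b)$, which is again a join of elements of $\alpha'(\Gamma\E(\S))$ rather than the image of a single element, so Stage-1 injectivity cannot be invoked directly. This can be repaired (first show $\alpha'$ reflects order on $\Gamma\E(\S)$: if $\alpha'(x)\le\alpha'(y)$ then $\alpha'(x)=\alpha'(yx^\dag x)$, whence $x=yx^\dag x\le y$ by Stage-1 injectivity, and then push through the joins), but the cleanest fix is the paper's: establish the identity $\pi\circ\alpha'=\theta|_{\S_\Gamma}$ \emph{before} injectivity and let injectivity of $\theta$ finish the job. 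Second, your transfer of $\pi\circ\alpha'=\theta$ to joins rests on the bare assertion that $\pi$ and $\theta$ preserve compatible joins; neither is obvious (it is not even clear a priori which joins exist in $\fund{\N,\D}$), and this needs an argument --- for instance, write $\bigvee\alpha'(B)=(e,\beta,f)$, observe that $f=j(x^\dag x)$ for $x=\bigvee B$ and that $\beta$ agrees with $\ad_{j(x)}$ on every projection $j(g)$ with $g\le x^\dag x$, and then conclude $[e,\beta,f]=\theta(x)$ from Lemma~\ref{Munnonpauto}. With those two repairs your proof is complete, and is essentially a self-contained version of the paper's argument with the citation to Lawson replaced by hand computation.
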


\begin{proof} Let $\R:=\{se: s\in\Gamma, e\in\E(\S)\}$.  Since
  $\Gamma$ is a group, $\R$ is an inverse semigroup.  As in the proof
  of Lemma~\ref{kernelFulman}(b), $\S_\Gamma$ is the join completion
  of $\R$.    We shall show
  that there is a multiplicative map of $\alpha': \R\rightarrow
  \pauto(\N,\D)$. 

Suppose $s,t\in\Gamma$.  Fulman's condition (c) applied to
$s^\dag t$ shows that if $p\in\proj(\D)$ and
$\alpha_s|_{p\D}=\alpha_t|_{p\D}$, then
$\alpha_s|_{p\N}=\alpha_t|_{p\N}$.  For $s\in\Gamma$ and $e\in\E(\S)$,
define
\[\alpha'(se):=(j(ses^\dag), \alpha_s|_{j(e)\N}, j(e)).\] Note that
this is well-defined, for if $se=tf$ for some idempotents $e, f$
and $t\in\Gamma$, then $\alpha_{s^\dag t}|_{fe\D}=\id|_{fe\D}$, so
$\alpha_s|_{ef\N}=\alpha_t|_{ef\N}$.  
Thus, $\alpha' :\R\rightarrow \pauto(\N,\D)$ is well-defined.  For
$s,t\in\Gamma$ and $e,f\in\E(\S)$ a calculation shows that
$\alpha'((se)(tf))=\alpha'(st)\alpha'(tf)$, so $\alpha'$ is a
homomorphism.    Also, for any $s\in\Gamma$ and $e\in\E(\S)$,
$\pi(\alpha'(se))=\theta(se)$.  

By~\cite[Theorem~1.4.24]{LawsonInSe}, $\alpha'$ extends uniquely to a
join-preserving homomorphism of $\S_\Gamma$ into $\pauto(\N,\D)$.
Take $s \in \Gamma$. Recall $\theta(s) = [j(ss^\dag), \ad_s, j(s^\dag
s)] = [j(1), \ad_s, j(1)].$ Since $\alpha_s(d) = j(s)dj(s)^*$ for all
$d\in \D$, by Lemma~\ref{Munnonpauto}, $\pi\circ\alpha(s) =
\theta(s)$.  That $\pi\circ\alpha' = \theta|_{\Gamma_\S}$ now follows
from the definition of $\alpha'$.  Since $\theta$ is a one-to-one map
it follows that $\alpha'$ is one-to-one.
\end{proof}

We now are prepared to recast Fulman's condition as a lifting problem.

\begin{theorem}\label{FulmanRecast} Let $(\M,\N,\D)$ be a Cartan
triple with associated extension $\P\hookrightarrow
\G\overset{q}\twoheadrightarrow \S$.  Then $(\M,\N,\D)$ satisfies Fulman's
condition if and only if there exists a homomorphism of inverse
semigroups $\alpha \colon \S\rightarrow\pauto(\N,\D)$ such that
$\pi\circ\alpha=\theta$.
\end{theorem}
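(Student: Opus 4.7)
For the forward direction ($\Rightarrow$), I assume Fulman's condition holds, so there is a regularizer $R$ with regularizing map $\alpha_0\colon R \to \aut(\N)$. This direction assembles from the three preceding lemmas. Lemma~\ref{kernelFulman} supplies a one-to-one group homomorphism $\tilde\alpha\colon q(R) \to \aut(\N,\D)$ given by $\tilde\alpha_{q(U)} = \alpha_0(U)$. Setting $\Gamma := q(R) \subseteq \S$, I check the hypotheses of Lemma~\ref{extendtoS}: hypothesis (i) is exactly Fulman's condition (c)(i) applied to $\alpha_0$, and (ii) follows from Fulman's (c)(ii) together with the fact that $U$ and $j(q(U))$ are Munn related and therefore act identically on $\D$. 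Lemma~\ref{extendtoS} then extends $\tilde\alpha$ uniquely to a one-to-one homomorphism $\alpha\colon \S_\Gamma \to \pauto(\N,\D)$ satisfying $\pi\circ\alpha = \theta|_{\S_\Gamma}$, and the unnumbered lemma immediately before Lemma~\ref{extendtoS} identifies $\S_\Gamma$ with all of $\S$.

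For the reverse direction ($\Leftarrow$), the construction of a regularizer is direct. Suppose $\alpha\colon \S \to \pauto(\N,\D)$ is a homomorphism with $\pi\circ\alpha = \theta$. I will take $R := \U(\M) \cap \G\N(\M,\D)$. Then $R$ is a subgroup of $\U(\M)$ containing $\U(\N)$ (since every element of $\N$ commutes with $\D \subseteq \Z(\N)$), and equation~\eqref{ungn} gives that $\spn R$ is weak-$*$ dense in $\M$. For each $u \in R$, $q(u)$ lies in the group of units of $\S$, so $\alpha(q(u))$ is a global automorphism of $(\N,\D)$; defining $\beta_u := \alpha(q(u))$ yields a homomorphism $\beta\colon R \to \aut(\N)$. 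Condition (c)(ii) is immediate: $\beta_u|_\D = \ad_{j(q(u))}|_\D = \ad_u|_\D$, using $\pi\circ\alpha = \theta$ and that $u$ is Munn related to $j(q(u))$.

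The heart of the argument is verifying Fulman's condition (c)(i) for $\beta$. Assume $\beta_u|_{p\D} = \id|_{p\D}$, set $s := q(u)$, and let $e \in \E(\S)$ be the idempotent with $j(e) = p$, so $\alpha(s)|_{p\D} = \id|_{p\D}$. The strategy is to show $se = e$ in $\S$, for then the product formula in $\pauto(\N,\D)$ gives $\alpha(se) = (\alpha_s(p),\alpha_s|_{p\N},p)$, which must equal $\alpha(e) = (p,\id|_{p\N},p)$, forcing $\alpha_s|_{p\N} = \id|_{p\N}$. To prove $se = e$: for each idempotent $f \leq e$, the identity $\alpha_s(j(f)) = j(s)j(f)j(s)^* = j(sfs^\dag)$ (which follows from $\alpha_s|_\D = \ad_{j(s)}|_\D$) combined with the assumption $\alpha_s|_{p\D} = \id|_{p\D}$ forces $j(sfs^\dag) = j(f)$, hence $sfs^\dag = f$. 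A routine Munn-class computation, using that $\S$ is fundamental, then upgrades the family $\{sfs^\dag = f : f \leq e\}$ to $se = e$. The main obstacle is precisely this transfer from $\D$-level triviality to $\N$-level triviality, which rests on the lift $\alpha$ being compatible with $\theta$ on $\D$ and on $\S$ being fundamental.
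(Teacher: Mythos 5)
Your proposal is correct and takes essentially the same route as the paper's proof: your forward direction assembles Lemma~\ref{kernelFulman}, Lemma~\ref{extendtoS}, and the join-completion lemma exactly as the paper does, and your reverse direction uses the same regularizer $R=\U(\M)\cap\G\N(\M,\D)$, the same homomorphism $\alpha\circ q|_R$, the same Munn-class identification for condition (c)(ii), and the same fundamentality argument for (c)(i) --- your claim $se=e$ is precisely the paper's $q(ue)=q(e)$ in slightly different notation, and the ``routine Munn-class computation'' you defer is the one-line computation $(se)f(se)^\dag = s(ef)s^\dag = ef$ that the paper carries out explicitly. No gaps.
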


\begin{proof} Suppose $(\M,\N,\D)$ satisfies Fulman's condition.
Combining Lemmas~\ref{kernelFulman} and~\ref{extendtoS} we obtain a
homomorphism $\alpha:\S\rightarrow\pauto(\N,\D)$ such that
$\pi\circ\alpha=\theta$.

Conversely, suppose $\alpha:\S\rightarrow \pauto(\N,\D)$ is a
homomorphism satisfying $\pi\circ\alpha=\theta$.  Let $R:=\U(\M)\cap
\G$.  Clearly $\U(\N)\subseteq R\subseteq \G\N(\M,\D)$ and $\spn R$ is
weak-$*$ dense in $\M$.  Let $\tau:= \alpha\circ q|_R$.  Then
$\tau\colon R\rightarrow \aut(\N)$ is a homomorphism.  For $u\in R$
write $\tau_u$ instead of $\tau(u)$.

We claim that for $u\in R$ and $d\in\D$, $\tau_u(d)=udu^*$.  Since
$\pi\circ\tau=\theta$, we obtain $\pi(\tau_u)=\theta(q(u))$, that is,
$[1,\tau_u,1]=[1,\ad_{q(u)},1]$.  By Lemma~\ref{Munnonpauto}, we
obtain $\tau_u|_\D=\ad_{q(u)}|_\D$.  But, using
Proposition~\ref{fundtofund}, for every $d\in\D$,
$\ad_{q(u)}(d)=udu^*$.  The claim follows.

Suppose $e\in\proj(\D)$ and $\tau_u|_{e\D}=\id|_{e\D}$.  Let $s=q(ue)$
and note that $s^\dag s=q(e)$.  For $f\in\E(\S)$ we have
\[sfs^\dag=q(uej(f)pu^*)=q(\tau_u(ej(f)))=q(ej(f)))=q(e)fq(e)^\dag.\]
Since $\S$ is fundamental, we obtain $s=q(e)$.  So $s$ is an
idempotent.  Therefore $\alpha(s)\in \pauto(\N,\D)$ is idempotent,
which is to say that $\alpha(s)=\id|_{e\N}$.  Since
$\alpha(s)=\tau_u|_{e\N}$, we find that $\tau_u|{e\N}=\id|_{e\N}$.
This completes the proof.
\end{proof}

\begin{remark}{Remark} While we do not presently have an example, it
seems unlikely that for a general Cartan triple, this lifting problem
will have a solution.  Thus, we expect that there should be an example
of a Cartan triple which is not a crossed product by an equivalence
relation.

A sufficient condition for a solution to the lifting problem is if the
map $j \colon \S \rightarrow \G$ can be chosen so that
$j(st)^*j(s)j(t) \in \D$.  In this case the map $\alpha \colon s
\mapsto (j(ss^\dag), \ad_s, j(s^\dag s))$ can be shown to be
homomorphism.  Clearly $\theta = \pi \circ \alpha$.
\end{remark}

\providecommand{\pauto}{\operatorname{paut}}

\bibliographystyle{amsplain}

\def\cprime{$'$}
\providecommand{\bysame}{\leavevmode\hbox to3em{\hrulefill}\thinspace}
\providecommand{\MR}{\relax\ifhmode\unskip\space\fi MR }
\providecommand{\MRhref}[2]{%
  \href{http://www.ams.org/mathscinet-getitem?mr=#1}{#2}
}
\providecommand{\href}[2]{#2}

\def\cprime{$'$} \providecommand{\bysame}{\leavevmode\hbox
to3em{\hrulefill}\thinspace}
\providecommand{\MR}{\relax\ifhmode\unskip\space\fi MR }
\providecommand{\MRhref}[2]{%
\href{http://www.ams.org/mathscinet-getitem?mr=#1}{#2} }
\providecommand{\href}[2]{#2}

\end{document}